\numberwithin{equation}{section}
\theoremstyle{plain}
\newtheorem{theorem}{Theorem}[section]
\newtheorem{proposition}[theorem]{Proposition}
\newtheorem{lemma}[theorem]{Lemma}
\newtheorem{corollary}[theorem]{Corollary}
\newtheorem{definition}[theorem]{Definition}
\newtheorem{example}[theorem]{Example}
\newtheorem{remark}[theorem]{Remark}
\newtheorem{convention}[theorem]{Convention}
\newenvironment{proof}{{\noindent \textbf{Proof}\,\,}}{\hspace*{\fill}$\Box$\medskip}
\def\dist{\operatorname{dist}}
\def\wt#1{\widetilde#1}
\def\rr{\mathbb R}
\def\var{\varepsilon}
\def\wh#1{\widehat#1}
\def\nn{\mathbb N}
 \def\mct{\mathcal T}
 \def\mch{\mathcal H}
 \def\zz{\mathbb Z}
 \def\rp{\mathbb{RP}}
\def\La{\Lambda}
\def\mca{\mathcal A}
  \def\diag{\operatorname{diag}}
 \def\la{\lambda}
\def\mcp{\mathcal P}
\def\mcn{\mathcal N}
\def\Xi{\mathcal Z}
\def\mcl{\mathcal L}
\def\mcf{\mathcal F}
\def\gl{\operatorname{GL}}
\def\flat{\operatorname{flat}}
\def\ut{\mathcal{UT}}
\title{On infinitely many foliations by caustics in  strictly convex open  billiards} 
\author{Alexey Glutsyuk\thanks{
CNRS, France (UMR 5669 (UMPA, ENS de Lyon), UMI 2615 (ISC J.-V.Poncelet)). E-mail: 
aglutsyu@ens-lyon.fr} \thanks{HSE University, Moscow, Russian Federation} 
\thanks{Kharkevich Institute for Information Transmission Problems (IITP RAS), Moscow, Russia}
 \thanks{Partly supported by Laboratory of Dynamical Systems and Applications, HSE University, of the Ministry of science and higher education of RF grant ag. No 075-15-2019-1931}\thanks{Partly supported by RFBR according to the research project 20-01-00420}}
\begin{document}
\maketitle
\begin{abstract} Reflection in strictly convex bounded planar  billiard acts 
on the space of oriented lines and preserves a standard area form. 
A {\it caustic}  is a curve $C$ whose tangent lines 
are reflected by the billiard to lines tangent to $C$. The famous Birkhoff conjecture 
states that the only strictly convex billiards with a foliation by closed caustics near the boundary are ellipses. By Lazutkin's theorem,  there always exists a Cantor 
family of closed caustics approaching the boundary. In the present paper we 
deal with  an open billiard, whose boundary is a  strictly convex embedded 
(non-closed) curve $\gamma$. We prove that there exists a domain $U$ adjacent to $\gamma$ 
from the convex side and a $C^\infty$-smooth foliation of  $U\cup\gamma$ whose leaves are $\gamma$ 
and (non-closed) caustics of the billiard.  
This generalizes a previous result by R.Melrose on existence of 
a germ of  foliation as above. 
%at a boundary point.
We show that there exist a continuum of above foliations by caustics whose 
germs at each point in $\gamma$ are pairwise different. 
We prove  a more general version of this 
statement 
%beginCCC
%in the cases, when 
for $\gamma$ being an (immersed) arc. 
%is just an arc, and also when 
%both $\gamma$ and the caustics are immersed curves. 
It also applies  to a billiard bounded by a closed strictly convex 
curve $\gamma$ and yields infinitely many 
"immersed" foliations by immersed caustics.
For the proof of the above results, we state and prove their analogue 
for a special class of area-preserving maps generalizing 
billiard reflections: the so-called $C^{\infty}$-lifted strongly billiard-like maps. We also prove a series of results on 
conjugacy of billiard maps  near the boundary for open curves of the above type.
\end{abstract}
\tableofcontents

\section{Introduction and main results}

The billiard reflection  from a strictly convex smooth planar curve $\gamma\subset\rr^2$ 
(parametrized by either a circle, or an interval) is a map $\mct$ acting on the  
subset in the space of oriented lines that consists of 
those lines that are either tangent to $\gamma$, or intersect $\gamma$ transversally at two points. (In general, 
the latter subset is not $\mct$-invariant. In the case, 
when $\gamma$ is a closed curve, the latter subset is $\mct$-invariant and called the {\it phase cylinder.}) 
Namely, 
if a line is tangent to $\gamma$, then it is a fixed point of the reflection map. If a line $L$ intersects $\gamma$ 
transversally at two points, take its last intersection point $B$ with $\gamma$ (in the sense of orientation of the line $L$) 
and reflect $L$ from $T_B\gamma$  according to the usual reflection law: the angle of incidence is equal to 
the angle of reflection. By definition, the image $\mct(L)$ is the reflected line  oriented at $B$ inside the convex domain adjacent to $\gamma$. 
The reflection map $\mct$  is  
called the {\it billiard ball map.} See Fig. 1.

The space of oriented lines in Euclidean plane $\rr^2_{x,y}$ is homeomorphic to cylinder,  and  it carries the standard symplectic form 
\begin{equation}\omega=d\phi\wedge dp,\label{defom}\end{equation} 
where $\phi=\phi(L)$ is the azimuth of the line $L$ (its angle  with the $x$-axis) and 
$p=p(L)$ is its signed distance  to the origin $O$ defined as follows. For  each oriented line $L$ 
that does not pass through $O$ consider the circle centered at $O$ and tangent to $L$. We say that $L$ is 
{\it clockwise (counterclockwise)}, if it orients the  latter circle clockwise (counterclockwise). By definition,

-  $p(L)=0$, if and only if $L$ passes through the origin $O$; 

-  $p=\dist(L,O)$, if $L$ is clockwise;  otherwise  $p=-\dist(L,O)$. 

It is well-known that 

- the symplectic form $\omega$ is invariant under affine orientation-preserving 
isometries;

- {\it the billiard reflections from all planar curves preserve the symplectic form $\omega$.}

\begin{definition} A curve $C$ is a {\it caustic} for the billiard on the curve $\gamma$, if each line tangent to $C$ 
is reflected from $\gamma$ to a line tangent to $C$. Or equivalently, if the curve of (appropriately oriented) 
tangent lines to $C$ is an invariant curve for the billiard ball map. See Fig. 1. 
\end{definition}
\begin{figure}[ht]
  \begin{center}
   \epsfig{file=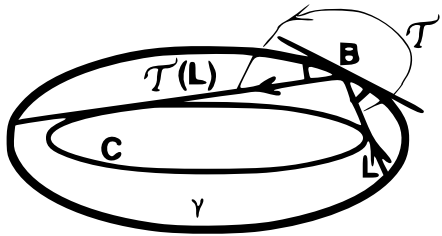, width=15em}
    \caption{The billiard ball map and a caustic.}
    \label{fig:01}
  \end{center}
\end{figure}
The famous Birkhoff Conjecture deals with a planar billiard bounded by a strictly convex closed curve $\gamma$. Recall that 
such  a billiard is called {\it Birkhoff integrable,} if there exists a topological annulus 
adjacent to $\gamma$ from the convex side foliated by closed caustics, 
and $\gamma$ is a leaf of this foliation. See Figure 2.  
It is well-known that the billiard in an ellipse is integrable, since it has a family of closed  caustics: confocal 
ellipses. 
The {\bf Birkhoff Conjecture} states the converse: {\it the only integrable planar billiards are ellipses.} 
\begin{figure}[ht]
  \begin{center}
   \epsfig{file=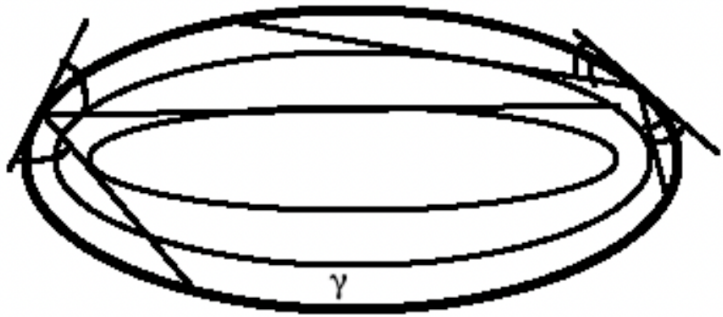, width=10cm}
    \caption{A Birkhoff integrable billiard.}
    \label{fig:02}
  \end{center}
\end{figure}

 \begin{remark} The condition of the Birkhoff Conjecture stating that the caustics in question form a {\it foliation} is important: 
 the famous result by Vladimir Lazutkin (1973) states that {\it each strictly convex bounded planar billiard with boundary  smooth enough 
 has a Cantor family of closed caustics.} But Lazutkin's caustic family  does not extend to a foliation in general. 
 \end{remark}
 
The main result of the paper presented in Subsection 1.1 shows that the other condition of the Birkhoff Conjecture stating  that the caustics in question 
are {\it closed} is also important: the Birkhoff Conjecture is false without closeness condition. Namely we show that 
any open strictly convex $C^\infty$-smooth planar  curve $\gamma$ has an adjacent domain $U$ (from the convex side) admitting a 
 foliation by caustics of $\gamma$ that extends to a $C^\infty$-smooth foliation  of the domain with boundary 
$U\cup\gamma$ with $\gamma$ being a leaf.  Moreover, we show that $U$ can be chosen so that 
there exist infinitely many (continuum of)  such foliations, 
and any two distinct foliations have pairwise distinct germs at every point in $\gamma$. We  prove analogous statement for a non-injectively immersed curve $\gamma$ and  "immersed foliations" by immersed caustics. We state and prove an analogue of this statement in the special case, 
when $\gamma$ is a closed curve. 

\begin{remark} \label{rkab} Consider the map $\mct$ of  billiard reflection from a strictly convex planar oriented 
$C^{\infty}$-smooth curve $\gamma$ that is a one-dimensional submanifold in $\rr^2$ parametrized by interval. 
 Let $\wh\gamma$ denote the family of its orienting tangent lines. Then the points of the curve $\wh\gamma$ are fixed by $\mct$. The map $\mct$ 
 is a well-defined area-preserving map 
 on an open subset adjacent to $\wh\gamma$ in the space of oriented lines. The latter subset  consists of those lines that intersect $\gamma$ 
 transversally and are directed to the concave side from $\gamma$ at some intersection point. Each caustic close to $\gamma$ 
 corresponds to a  $\mct$-invariant curve (the family of its tangent lines chosen with appropriate orientation) and vice versa. 
 Thus, a foliation by caustics induces a foliation by $\mct$-invariant curves. In Subsection 2.7 we prove the converse: 
 each $C^{\infty}$-smooth foliation by $\mct$-invariant curves on a domain adjacent to $\wh\gamma$ from 
 appropriate side (with $\wh\gamma$ 
 being a leaf) induces a $C^{\infty}$-smooth foliation by caustics (with $\gamma$ being a leaf).
 \end{remark}

We show that {\it the billiard map has infinite-dimensional family of $C^{\infty}$-smooth foliations by invariant curves 
(including $\wh\gamma$) 
in appropriate domain adjacent to $\wh\gamma$ with pairwise distinct germs at each point of the 
curve $\wh\gamma$.} This together with Remark \ref{rkab} implies  existence of infinite-dimensional 
family of foliations by caustics.

In Subsection 1.3 we state the generalization of the above result on foliations by invariant curves  to a special class of area-preserving maps: 
the so-called $C^{\infty}$-lifted strongly billiard-like maps, for which we prove existence of infinite-dimensional 
family of  $C^{\infty}$-smooth foliations by invariant curves with pairwise distinct germs at each point 
of the boundary segment. In Subsection 1.4 we describe one-to-one correspondence 
between germs of the latter foliations 
and germs at $S^1\times\{0\}$ of $C^{\infty}$-smooth $h$-flat functions $\psi(t,h)$ on the cylinder $S^1\times\rr_{\geq0}$ 
such that $\psi(0,h)\equiv0$. 
This yields a one-to-one correspondence between foliations by caustics and the above 
germs of flat functions on cylinder. 
Theorem \ref{unget} stated in Subsection 1.4 asserts that all the foliations by caustics (invariant curves) corresponding 
to a given billiard (map) have coinciding jets of any order at each point of the boundary curve. 
%begin-new

 The  results of the paper mentioned below are motivated  
by the following open question attributed to Victor Guillemin:

{\it Let two billiard maps corresponding to two strictly convex closed Jordan curves be conjugated by a homeomorphism. What can be said about the curves? Are they similar (i.e., of the same shape)?}

Theorem \ref{addthm3} presented in Subsection 1.3 states that each 
$C^{\infty}$-lifted strongly billiard-like map is $C^\infty$-smoothly symplectically 
conjugated near the boundary (and up to the boundary) to the normal form $(t,z)\mapsto(t+\sqrt z,z)$ 
restricted to $U\cup J$, where $J\subset\rr\times\{0\}$ is an interval of the horizontal axis and 
$U\subset\rr\times\rr_+$ is a domain adjacent to $J$.  
In particular, this holds for the billiard map corresponding to each $C^\infty$-smooth strictly convex 
(immersed) curve. 
As an application, we obtain a series of results on (symplectic) conjugacy of  billiard maps near the boundary 
for billiards with reflections from $C^\infty$-smooth strictly convex curves parametrized by intervals. 
These conjugacy results are stated in Subsection 1.5 and proved in Subsection 2.10. One of them (Theorem \ref{thconj1}) 
states that for any two strictly convex open billiards, each of them being bounded by an infinite curve with asymptotic tangent line at infinity 
in each direction, the corresponding billiard maps are $C^\infty$-smoothly conjugated near the boundary. 

%end-new
 The results of the paper are proved in Section 2. The plan of proofs  is presented in Subsection 1.6. 
The corresponding background material on symplectic 
properties of billiard ball map is recalled in Subsection 1.2. A brief historical survey is presented in Subsection 1.7.

\subsection{Main result: an open convex arc has infinitely many foliations by caustics}

 Consider an open planar billiard: a convex planar domain bounded by a strictly convex 
$C^\infty$-smooth one-dimensional submanifold $\gamma$ that is a curve  parametrized by interval; it goes 
 to infinity in both directions. 
Let $U$ be a domain adjacent to $\gamma$ from the convex side. Consider a foliation $\mcf$ 
of the domain $U$ by strictly convex smooth curves, 
with $\gamma$ being a leaf. We consider 
that it is a foliation by (connected components of) level curves of a   continuous 
function $h$ on $U\cup\gamma$  such that $h|_\gamma=0$, $h|_U>0$ and $h$ strictly increases as a function of 
the transversal parameter. We also consider that for every  $x\in\gamma$ and every leaf $\mcl$ 
of the foliation $\mcf$ there are at most two tangent lines to $\mcl$ through $x$. One can achieve this by shrinking 
the foliated domain $U$, since for every $x\in\gamma$ the line $T_x\gamma$ is the only line through $x$ 
tangent to $\gamma$. Indeed, if there were another line through $x$ tangent to $\gamma$ at a 
point $y\neq x$, then the total increment of azimuth of the orienting tangent vector to $\gamma$ along the 
arc $xy$ would be greater than $\pi$. But the latter azimuth is monotonous, and its total increment 
along the curve $\gamma$ is no greater than $\pi$, since $\gamma$ is convex  and goes to infinity in both 
directions. The contradiction thus obtained proves uniqueness of tangent line through $x$. 

\begin{remark} In the above conditions 
for every compact subarc $\gamma'\subset\gamma$ and every leaf $\mcl$  of the foliation $\mcf$ 
close enough to $\gamma$ for every $x\in\gamma'$ there exist exactly two tangent lines to $\mcl$ through $x$.  
This follows from convexity. 
\end{remark}

\begin{definition} 
 We say that $\mcf$ is  
a {\it foliation by caustics} of the billiard played on $\gamma$, if its leaves are  caustics, see Fig. 3,  in the following sense. Let $x\in\gamma$, and let $\mcl$ be a leaf of the foliation $\mcf$. If  there exist two tangent lines 
to $\mcl$ through $x$, then they are 
symmetric with respect to the tangent line $T_x\gamma$. 
 \end{definition}

\begin{remark}\label{extralines} The above definition 
 also makes sense in the case, when $\gamma$ is just a strictly convex arc that needs not go to infinity. 
 A priori, in this case for some $x\in\gamma$ there may be more than two tangent lines through $x$ to a leaf 
 of the foliation, even for leaves arbitrarily close to $\gamma$. This holds, e.g., if there is a line through $x$ 
 tangent to $\gamma$ at a point distinct from $x$. This may take place only in the case, when
  the azimuth increment along $\gamma$ of the orienting tangent vector to $\gamma$ is bigger than $\pi$. 
  In this case we modify the above definition as follows. Let $\mathcal H$ denote the space of triples 
 $(x,y,z)$, where $x\in \gamma$ and $y$, $z$  lie in the same leaf $\mcl$ of the foliation  $\mcf$, $y\neq z$, such that 
the lines $xy$ and $xz$  are 
 tangent to  $\mcl$ at the points $y$ and $z$ respectively. Set 
 $$\overline\mch:=\mch\cup\Delta, \ \ \ \Delta:=\{(x,x,x) \ | \ x\in \gamma\}.$$
  Let $\mch_0$ denote the  
path-connected component of the space $\overline\mch$ that contains $\Delta$. 
 We require that for every $(x,y,z)\in\mch_0\setminus\Delta$  
 the lines $xy$ and $xz$ be symmetric 
 with respect to the line $T_x\gamma$.
\end{remark} 
\begin{figure}[ht]
  \begin{center}
   \epsfig{file=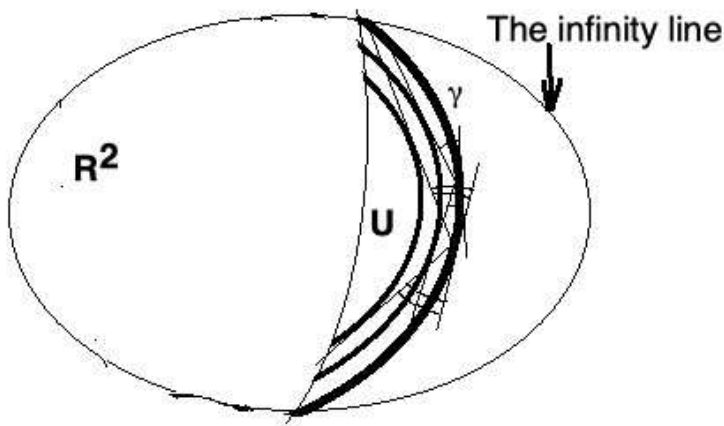, width=15cm}
    \caption{An open strictly convex planar billiard and its caustics. Here the ambient plane $\rr^2$ is presented 
    together with its boundary: the infinity line.}
    \label{fig:01}
  \end{center}
\end{figure}
\begin{definition} \label{dample} Let $\gamma\subset\rr^2$ be a smooth curve parametrized by an interval. 
Let $U\subset\rr^2$ be a domain adjacent to $\gamma$. A collection of $C^\infty$-smooth foliations on $U\cup\gamma$ with 
$\gamma$ being a leaf 
is  said to be an {\it infinite-dimensional family of foliations with distinct boundary germs,} if their 
germs at each point in $\gamma$ are pairwise distinct, and if their collection contains 
a $C^\infty$-smooth $N$-parametric family of foliations for every $N\in\nn$. 
\end{definition} 

\begin{theorem} \label{thm1} 1) Consider an open billiard bounded 
by a strictly convex $C^{\infty}$-smooth curve $\gamma\subset\rr^2$:  
a one-dimensional submanifold parametrized by interval. 
There exists a simply connected domain $U$ adjacent to $\gamma$ from the convex side that admits a 
  foliation by caustics of the billiard that extends to a $C^\infty$-smooth foliation on 
$U\cup\gamma$, with $\gamma$ being a leaf. Moreover, $U$ can be chosen to admit an 
infinite-dimensional family of foliations as above with distinct boundary germs.  See Fig. 3. 

2) The above statements remain valid in the case, when $\gamma$ is just an arc: a strictly convex 
curve parametrized by an interval such that each its point has a neighborhood $V$ whose intersection with 
$\gamma$ is a submanifold in $V$.  
\end{theorem}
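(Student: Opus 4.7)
The plan is to reduce Theorem \ref{thm1} to the analogous statement for foliations by $\mct$-invariant curves of the billiard ball map, and then to apply the normal-form and parametrization machinery for $C^\infty$-lifted strongly billiard-like maps to be developed later in the paper. As a first step, I would invoke Remark \ref{rkab} to translate the problem: a $C^\infty$-smooth foliation by caustics with $\gamma$ a leaf is equivalent to a $C^\infty$-smooth foliation by $\mct$-invariant curves on a one-sided neighborhood of the curve $\wh\gamma$ of orienting tangent lines, with $\wh\gamma$ itself a leaf. Under this correspondence germs at $x \in \gamma$ match germs at the tangent line to $\gamma$ at $x$, so distinct boundary germs in one picture match distinct boundary germs in the other.

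Next I would verify that the billiard map $\mct$, viewed in the symplectic coordinates $(\phi,p)$ from \eqref{defom} and restricted to a one-sided neighborhood of $\wh\gamma$, belongs to the class of $C^\infty$-lifted strongly billiard-like maps of Subsection 1.3. This is a routine computation using the $\sqrt h$-asymptotics of short-chord reflections together with the fact, already used in the excerpt above, that a strictly convex open curve going to infinity in both directions has no bitangent lines. Applying Theorem \ref{addthm3} then produces a $C^\infty$-smooth symplectic conjugacy, defined up to and including $\wh\gamma$, that brings $\mct$ onto the model map $(t,z)\mapsto(t+\sqrt z,z)$ on $U'\cup J$ with $J\subset\rr\times\{0\}$ an interval and $U'\subset\rr\times\rr_+$ a simply connected strip. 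In this model the horizontal foliation $\{z=\text{const}\}$ is one obvious smooth invariant foliation, and hence pulls back to one $C^\infty$-smooth foliation by caustics of $\gamma$.

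Then, using the one-to-one correspondence of Subsection 1.4 between germs at $J$ of smooth invariant foliations for the model map and $C^\infty$-smooth $h$-flat functions $\psi(t,h)$ with $\psi(0,h)\equiv 0$, I would build the infinite-dimensional family: the space of such $\psi$ is obviously infinite-dimensional and contains a smooth $N$-parameter subfamily for every $N$ (e.g.\ by taking $N$-parameter families of flat bump functions supported in disjoint regions away from $t=0$). Pulling the corresponding invariant foliations back through the normal-form conjugacy and then through the bijection of Remark \ref{rkab} produces the desired infinite-dimensional family of $C^\infty$-smooth foliations by caustics of $\gamma$ on a simply connected domain $U$ (the image of $U'$). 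Part (2), where $\gamma$ is only an immersed arc, follows from exactly the same construction since everything is local along $\gamma$; the additional tangent pairs to leaves that are accounted for by Remark \ref{extralines} come from non-local intersections and do not enter the normal-form reduction, which takes place in a tube around $\wh\gamma$ in the space of oriented lines.

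The main obstacle is the combination of Theorem \ref{addthm3} with the classification of Subsection 1.4. Theorem \ref{addthm3} requires a genuine $C^\infty$-smooth conjugacy to the square-root model all the way up to the boundary curve of fixed points, not merely a formal one; this is subtle because the dynamics is non-hyperbolic and the model is degenerate at $z=0$. The classification of invariant foliations by the flat functions $\psi$ must moreover guarantee that distinct $\psi$ give foliations whose germs differ at \emph{every} point of $J$, not just at one point or globally, and this is precisely the "distinct boundary germs" clause of Definition \ref{dample}. I expect this point to follow by tracing how a nonzero flat perturbation of $\psi$ propagates under the model dynamics $(t,z)\mapsto(t+\sqrt z,z)$ along orbit arcs passing arbitrarily close to each point of $J$, so that a variation introduced anywhere in $\psi$ cannot be invisible at a boundary point without being invisible everywhere.
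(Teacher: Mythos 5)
Your outline follows the paper's strategy for the reduction to the normal form and for producing the family (billiard map as a $C^\infty$-lifted strongly billiard-like map, Theorem \ref{addthm3}, the flat-function parametrization of Subsection 1.4, and propagation along orbits for the ``distinct germs at every point'' clause, which is exactly Proposition \ref{distgerm}). But there is a genuine gap at the last step: you treat the passage from a smooth foliation by $\mct$-invariant curves in the space of oriented lines to a smooth foliation by caustics of a planar domain $U\cup\gamma$ as an automatic consequence of Remark \ref{rkab}. That remark only \emph{asserts} this converse and explicitly defers its proof to Subsection 2.7, where it is the main geometric content of deducing Theorem \ref{thm1} from Theorem \ref{thm3}. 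Two things must actually be proved there. First, the invariant function $G$ is constructed in the coordinates $(s_1,y_1)$, and these are \emph{not} smooth coordinates on the space of lines at $\wh\gamma$: the function $L\mapsto s_1(L)$ fails to be $C^1$ on the boundary curve, so smoothness of $G$ in $(s_1,y_1)$ does not give smoothness of the foliation on $W\cup\wh\gamma$. The paper resolves this (Lemma \ref{lgw}) by rewriting $G$ in the symmetrized coordinates $\bigl(\tfrac{s_1+s_2}{2},\bigl(\tfrac{s_2-s_1}{2}\bigr)^2\bigr)$, using the invariance $\wt G(s_1,\phi_1)=\wt G(s_2,\pm\phi_2)$, and then checking via the auxiliary chart $(\alpha^*,\psi^*)$ that these are genuine smooth coordinates on the space of lines up to $\wh\gamma$, together with a curvature computation to verify nondegeneracy of $dG$ on the boundary. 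Second, one must show that the envelope (dual) map $x\mapsto L_x^*$ is a diffeomorphism of $W\cup\wh\gamma$ onto a domain $U\cup\gamma$ on the convex side, which requires the nested-flowbox construction of Proposition \ref{shrink} to guarantee global injectivity of the tangent-line map and strict convexity of the leaves. Neither step is routine, and your proposal does not mention either.

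A related, smaller omission: for the infinite-dimensional family you need all the foliations, \emph{and} their dual caustic foliations, to be regular on one common domain $U$. The paper devotes Claims 1--3 of Subsection 2.9 to this (uniform bounds on the derivatives of $\psi(\tau/\phi,\phi)$ despite the blow-up of $\tau/\phi$, control of the Hessian $H(g_\var)$ to keep all leaves strictly convex simultaneously, and a second nested-flowbox argument to make the duality map a diffeomorphism for all parameter values at once). Your plan produces a family of germs but does not address realizing them on a single simply connected $U$, which is part of the statement via Definition \ref{dample}.
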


\begin{remark} \label{remmelr} It follows from R.Melrose's result \cite[p.184, proposition (7.14)]{melrose1} that each point of the curve $\gamma$ has an arc neighborhood $\alpha\subset\gamma$ 
for which there exists a domain $U$  adjacent to $\alpha$ from the convex side such that $U\cup\alpha$ is 
 $C^{\infty}$-smoothly foliated by caustics of the billiard played on $\gamma$. 
The new result given by Theorem \ref{thm1} is the statement 
that the latter holds  for the whole curve $\gamma$ and there exist infinitely many 
foliations by caustics with distinct boundary germs.
\end{remark}
Below we extend Theorem \ref{thm1} to the case of immersed (or closed) curve $\gamma$. 
\begin{definition} 
Let $\gamma\subset\rr^2$ be a strictly convex   $C^{\infty}$-smooth curve 
that is the image of an interval $(0,1)$ with coordinate $x$ under an {\bf immersion} $\psi:(0,1)\to\gamma$.  
Let $V\subset(0,1)\times\rr_+\subset\rr^2$ be a domain adjacent to the interval $J:=(0,1)\times\{0\}$. Fix a 
 $C^{\infty}$-smooth immersion $\Psi: V\cup J\to\rr^2$ extending $\psi$ as a map  
$J\to\gamma$, sending  $V$  to the convex side 
from $\gamma$. Let $U\subset V$ be a domain adjacent to $J$ and equipped with a foliation  $\mcf$ 
by smooth curves parametrized by intervals, with $J$ being a leaf. We consider that $\mcf$ is a foliation by
 level curves of a continuous function $h:U\to\rr$, $h|_J=0$, $h|_U>0$, such that $h$ strictly increases as a 
 function of the transversal parameter. We say that $\mcf$ is a {\it foliation by lifted caustics} of the billiard 
 played on $\gamma$, if $\Psi$ sends each its leaf $\mcf_t=\{ h=t\}$ to a caustic of the billiard, see Fig. 4. 
In more detail,   let $\mathcal H$ denote the space of triples 
 $(x,y,z)$, where $x\in J$ and $y$, $z$  lie in the same leaf $\mcl$ of the foliation  $\mcf$, $y\neq z$, such that 
the lines $\Psi(x)\Psi(y)$ and $\Psi(x)\Psi(z)$  are 
 tangent to  the curve $\Psi(\mcl)$ at the points $\Psi(y)$ and $\Psi(z)$ respectively. Set 
 $$\overline\mch:=\mch\cup\Delta, \ \ \ \Delta:=\{(x,x,x) \ | \ x\in J\}.$$
  Let $\mch_0$ denote the  
path-connected component of the space $\overline\mch$ that contains $\Delta$. 
 We require that for every $(x,y,z)\in\mch_0\setminus\Delta$  
 the lines $\Psi(x)\Psi(y)$ and $\Psi(x)\Psi(z)$ be symmetric 
 with respect to the line tangent to $\gamma$ at $\Psi(x)$. 
 \end{definition}
 
 \begin{figure}[ht]
  \begin{center}
   \epsfig{file=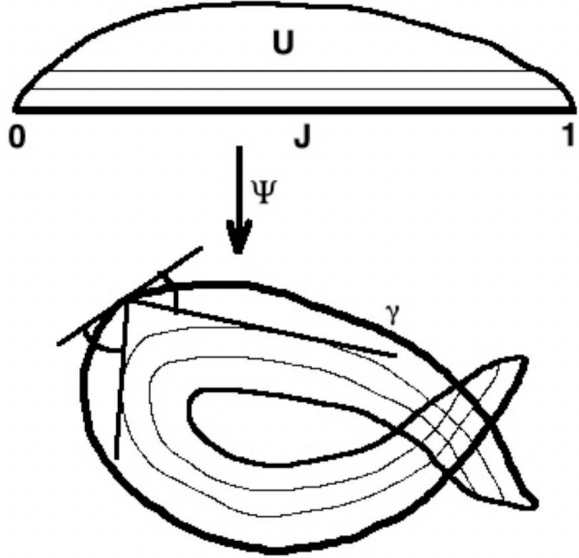, width=8cm}
    \caption{An immersed foliation by immersed caustics.}
    \label{fig:01}
  \end{center}
\end{figure}

 \begin{theorem} \label{thm2} Let $\gamma$, $\psi$, $\Psi$, $J$, $V$ be as above. 
 There exists a domain $U\subset V$ adjacent to $J$ on which there exists
  a  foliation by lifted caustics that extends to a 
$C^\infty$-smooth foliation on $U\cup J$,  with $J$ being a leaf. 
The above $U$ can be chosen so that it admits an infinite-dimensional family of foliations as above 
with distinct boundary germs. See Fig. 4.
\end{theorem}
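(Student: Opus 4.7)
The plan is to reduce Theorem \ref{thm2} to the general result on $C^\infty$-lifted strongly billiard-like maps announced in Subsection 1.3, working on parameter space rather than on the space of oriented lines in $\mathbb{R}^2$ (which is ill-suited to the immersed setting because distinct pairs of parameters may produce the same line in the ambient plane). I would define a \emph{lifted billiard ball map} $\wh{\mct}$ on a one-sided neighborhood of the diagonal $\Delta=\{s_1=s_2\}\subset(0,1)^2$ by $\wh{\mct}(s_1,s_2)=(s_2,s_3)$, where $s_3$ is the unique parameter near $s_2$ such that the direction from $\Psi(s_2)$ to $\Psi(s_3)$ is the reflection across $T_{\Psi(s_2)}\gamma$ of the direction from $\Psi(s_1)$ to $\Psi(s_2)$. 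By construction $\Delta$ is the fixed-point locus of $\wh{\mct}$, and $\wh{\mct}$ preserves the pullback to $(0,1)^2$ of the standard symplectic form $\omega=d\phi\wedge dp$ via the map sending $(s_1,s_2)$ to the oriented chord through $\Psi(s_1)$ and $\Psi(s_2)$.

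Next, I would change coordinates $(s_1,s_2)\mapsto(t,h)$ with $t$ along $\Delta$ and $h$ transverse, vanishing to first order on $\Delta$, and verify that in these coordinates $\wh{\mct}$ lies in the class of $C^\infty$-lifted strongly billiard-like maps. This calculation is purely local along $\Delta$ and depends only on the jets of $\Psi$ at each point of $J$; it is identical to the one in the embedded case, because $\Psi$ is a local diffeomorphism. Invoking the main result of Subsection 1.3 for $\wh{\mct}$ then produces an infinite-dimensional family of $C^\infty$-smooth foliations by $\wh{\mct}$-invariant curves on a one-sided neighborhood of $\Delta$, with pairwise distinct germs at every point of $\Delta$.

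Finally, I would transfer these foliations back to lifted caustic foliations on $U\cup J\subset V\cup J$ using a parameter-space version of Remark \ref{rkab}. Each $\wh{\mct}$-invariant curve near $\Delta$ is a graph $s_2=\sigma(s_1)$ with $\sigma(s_1)>s_1$, and the associated family of chords $\Psi(s_1)\Psi(\sigma(s_1))$ envelopes a (possibly self-intersecting) caustic in $\mathbb{R}^2$; reading off the tangency parameter as a function of the invariant curve produces a leaf in $V\cup J$, and stacking these leaves yields a $C^\infty$-smooth foliation of $U\cup J$ whose leaves map under $\Psi$ to caustics, with $J$ as a leaf. The symmetry condition in the definition of lifted caustic foliation is exactly the reflection law built into $\wh{\mct}$, and the family of invariant curve foliations produced above delivers the required infinite-dimensional family of lifted caustic foliations with distinct boundary germs on $J$.

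The hard part will be checking that $\wh{\mct}$ genuinely belongs to the class of $C^\infty$-lifted strongly billiard-like maps: establishing $C^\infty$-liftability (the square-root-type behaviour along $\Delta$, inherent in the fact that tangential chords come in pairs, must unfold smoothly in the correct chart) and verifying that the pulled-back symplectic form matches the one required by Subsection 1.3 up to the boundary. Once this local verification is carried out, the immersed nature of $\gamma$ contributes no essential new difficulty beyond keeping the entire construction on parameter space, since $\Psi$ is a local diffeomorphism and every caustic-related condition descends through it.
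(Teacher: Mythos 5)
Your proposal is sound and follows the same overall strategy as the paper: reduce to the existence and non-uniqueness theorems for $C^\infty$-lifted strongly billiard-like maps (Theorems \ref{thm3}, \ref{thm33}) on a parameter-space chart --- which, as you correctly observe, is forced in the immersed setting because the map sending a pair of parameters to the chord through their images is only a local diffeomorphism (Remark \ref{rkimm}) --- and then pass from invariant curves to their envelopes. The technical realization differs in one respect: the paper takes $(s_1,y_1)$, $y_1=1-\cos\phi_1$, as the chart in which the billiard map is strongly billiard-like (Proposition \ref{psmi} and Example \ref{exdel}, imported from \cite{gpor}), and uses your two-point chart $(s_1,s_2)$ only as an auxiliary device, via the symmetrized coordinates $(\alpha,\psi)=(\frac{s_1+s_2}2,(\frac{s_2-s_1}2)^2)$, to prove smoothness of the invariant function up to the boundary (Lemma \ref{lgw}); you instead propose to carry out the billiard-like verification directly in $(s_1,s_2)$. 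That verification is genuinely the hard part you flag: in the $(s_1,s_2)$ chart the invariant area form degenerates on the diagonal (its density is the mixed second derivative of the chord length, which vanishes there), so the transverse coordinate $h$ must be chosen quadratic in $s_2-s_1$ to produce simultaneously the $\sqrt h$-asymptotics, the involution structure and the standard form $dt\wedge dh$; this is doable but amounts to redoing Proposition \ref{psmi} in a different chart. The one step you pass over too quickly is the last one: to obtain an infinite-dimensional family on a \emph{single} domain $U$, it is not enough that the invariant-curve foliations live on a common domain $W$ --- each envelope construction a priori yields its own domain $U_\var$, and one must produce a domain adjacent to $J$ contained in all of them on which every leaf of every foliation is regular and satisfies the tangent-line condition in the definition of lifted caustic. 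The paper arranges this by a nested flowbox construction (Proposition \ref{shrink} and Claims 2 and 3 of Subsection 2.9); this is a fillable uniformity issue rather than a flaw in your approach, but it does need to be addressed explicitly.
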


\begin{theorem} \label{thm2closed} Let 
$\gamma$ be a strictly convex  closed curve bijectively parametrized by    
circle. Fix  a topological annulus $\mca$ adjacent to $\gamma$ from the convex side. 
 Let  $\pi:\wt\mca=\rr\times[0,\var)\to\mca$ be its universal covering, set  
  $J:=\rr\times\{0\}$; $\pi:J\to\gamma$ is the universal covering over $\gamma$. 
 There exists a domain $U\subset\wt\mca\setminus J$ adjacent to $J$ that admits 
a  foliation  by lifted caustics  of the billiard in $\gamma$ that extends to a $C^{\infty}$-smooth foliation on 
$U\cup J$, with  $J$ being a leaf. Moreover, one can choose $U$ so that there  exist an 
infinite-dimensional family of foliations as above with distinct boundary germs.
\end{theorem}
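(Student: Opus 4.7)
The plan is to reduce Theorem \ref{thm2closed} to the framework of $C^{\infty}$-lifted strongly billiard-like maps described in Subsection 1.3, applied to the lift of the billiard map to the universal cover of the phase cylinder. This is the same abstract result that yields Theorems \ref{thm1} and \ref{thm2}; the new ingredient for the closed case is the passage to the universal cover in both the position and phase spaces.

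First I would set up the lifted billiard map. Parametrize $\gamma$ by arclength modulo its length and denote the resulting phase cylinder of the billiard by $\mcb$. Since the universal cover of $S^{1}$ is $\rr$, the universal cover $\wt\mcb$ of $\mcb$ is a half-strip bounded by the line $J=\rr\times\{0\}$ of (oriented) tangent lines to $\gamma$; this $J$ is identified with the universal cover of $\gamma$ via $\pi$. The billiard reflection $\mct$ commutes with the deck transformations (integer translations in the arclength coordinate) and hence lifts uniquely to an area-preserving diffeomorphism $\wt\mct:\wt\mcb\to\wt\mcb$ that fixes $J$ pointwise and preserves the pulled-back symplectic form.

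Next I would verify that $\wt\mct$ is a $C^{\infty}$-lifted strongly billiard-like map in the sense of Subsection 1.3. The defining conditions of this class are local in the transverse direction to the boundary, and for the billiard map on a $C^{\infty}$-smooth strictly convex closed curve they hold locally in $s$ by Melrose's normal-form analysis (the same fact that underlies Theorem \ref{thm1}). Since $\gamma$ is closed, the local data is $\zz$-periodic in $s$, so a compact-fundamental-domain version of the analysis produces the global smoothness and uniform estimates up to $J$ demanded by the definition. Invoking the main result of Subsection 1.3 then furnishes a domain $\wt W\subset\wt\mcb$ adjacent to $J$ carrying an infinite-dimensional family of $C^{\infty}$-smooth $\wt\mct$-invariant foliations, each having $J$ as a leaf and with pairwise distinct germs at every point of $J$.

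Finally I would translate these invariant foliations back into lifted-caustic foliations via the tangent-line correspondence of Remark \ref{rkab}, to be proved for this setting in Subsection 2.7. Each $\wt\mct$-invariant curve close to $J$ is the set of appropriately oriented tangent lines to a curve in $\wt\mca$ whose image under $\pi$ is a caustic of the billiard in $\gamma$; hence each $\wt\mct$-invariant foliation of $\wt W$ induces a foliation by lifted caustics of a domain $U\subset\wt\mca\setminus J$ adjacent to $J$, extending $C^{\infty}$-smoothly to $U\cup J$ with $J$ as a leaf. Distinct invariant foliations yield distinct lifted-caustic foliations with distinct boundary germs, so the infinite-dimensional family transfers intact. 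The main technical obstacle is the global verification that $\wt\mct$ lies in the class of $C^{\infty}$-lifted strongly billiard-like maps: this is where $\zz$-periodicity in $s$ and the precise form of Melrose's normal form must be combined to deliver the uniform global regularity the abstract theorem requires.
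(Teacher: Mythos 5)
Your proposal is correct and is essentially the paper's own argument: the paper disposes of Theorem \ref{thm2closed} by observing that the universal covering $J=\rr\times\{0\}\to\gamma$ exhibits the closed curve as an immersed curve parametrized by an interval, so the statement is an instance of Theorem \ref{thm2}, which in turn runs exactly your pipeline (billiard map in $(s,y)$-coordinates is a $C^\infty$-lifted strongly billiard-like map by Proposition \ref{psmi} and Example \ref{exdel}, apply Theorem \ref{thm3}, then dualize invariant curves to caustics via Subsection 2.7). The only mild misplacement of emphasis is your closing remark: the verification that the lifted map is strongly billiard-like is already done uniformly on compact $s$-sets in the paper and periodicity costs nothing extra, whereas the real technical content sits in the duality step you defer to Subsection 2.7 (smoothness of the invariant function up to $\wh\gamma$ and diffeomorphicity of $x\mapsto L_x^*$).
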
 
\begin{remark} In general, in Theorem \ref{thm2closed} the projected leaves are caustics that need not  be closed, may intersect each other and 
may have self-intersections. 
Each individual caustic may have a finite length. However the latter finite length tends to infinity, as the caustic 
in question tends to $\gamma$.
\end{remark}

A generalization of  Theorems \ref{thm1}, \ref{thm2} for the so-called $C^{\infty}$-lifted 
strongly billiard-like maps will be stated in Subsection 1.3. 

\subsection{Background material: symplectic properties of billiard ball map}

Let $\gamma$ be a $C^{\infty}$-smooth strictly convex oriented curve in $\rr^2$ parametrized injectively either by an interval, or by circle. Let $s$ be its natural length parameter 
respecting its orientation. We identify a point in $\gamma$ with the corresponding value of the natural parameter $s$. 

Let $\Gamma:=T_{=1}\rr^2|_\gamma\subset T\rr^2_\gamma$ 
denote the restriction to $\gamma$ of the unit tangent 
bundle of the ambient plane $\rr^2$: 
$$\Gamma=\{(q,u) \ | \ q\in\gamma, \ u\in T_q\rr^2, \ ||u||=1\}.$$
 It is a two-dimensional surface parametrized diffeomorphically by 
$(s,\phi)\in\gamma\times S^1$; here $\phi=\phi(u)$ is the angle of 
a given unit tangent vector $u\in T_s\rr^2$   with the orienting unit tangent vector $\dot\gamma(s)$ to $\gamma$. The curve 
$$\wt\gamma:=\{\phi=0\}=\{ (s,\dot\gamma(s)) \ | \ s\in\gamma\}$$
is the graph of the above vector field $\dot\gamma$. 
For every $(q,u)\in\Gamma$ set 
$$L(q,u):=\text{  the oriented line through } q \text{ 
directed by the vector } u.$$
 We treat the two  following cases separately. 

{\bf Case 1):}  the curve $\gamma$  either is parametrized by 
an interval and goes to infinity in both directions, or is parametrized by circle. 
That is, it bounds a strictly convex infinite (respectively, bounded) 
planar domain. Let $\Gamma^0\subset\Gamma$ 
denote  the neighborhood of the curve $\wt\gamma$ 
that consists of those   $(q,u)\in\Gamma$ that satisfy the  following conditions: 

a) the line $L(q,u)$ either intersects $\gamma$ at two points $q$ and $q'$, or  is the orienting 
tangent line to $\gamma$ at $q$: $u=\dot\gamma(s)$; 
in the latter case we set $q':=q$;

b)  the angle between the oriented line $L(q,u)$ and any of the orienting tangent vectors to 
$\gamma$ at $q$ or $q'$ is acute\footnote{In the case under consideration condition b) implies that 
the line $L(q,u)$ has acute angle with the orienting tangent vector $\dot\gamma$ at each point of the arc $qq'$ (for appropriately 
chosen arc $qq'$ in the case, when $\gamma$ is a  closed curve).}

Let $u'$ denote the directing unit vector of the line $L(q,u)$ at $q'$. Consider the two following involutions acting on $\Gamma^0$ 
and $\Gamma$ respectively:
$$\beta:\Gamma^0\to\Gamma^0, \ \beta(q,u)=(q',u'); \ \ \beta^2=Id;$$
$$I:\Gamma\to\Gamma \text{ is the reflection from } T_q\gamma: \  I(q,u)=(q,u^*),$$
where $u^*$ is the vector symmetric to $u$ with respect to the tangent line $T_q\gamma$.
Let $\Gamma^0_+\subset\Gamma^0$ denote the open subset of those pairs $(q,u)$ 
in which the vector $u$ is directed to the convex side from the curve $\gamma$.

\begin{remark} The domain $\Gamma^0$ is  $\beta$-invariant.  It is 
a topological disk (cylinder), if $\gamma$ is parametrized by an interval (circle). 
The domain $\Gamma^0_+$ is a topological disk (cylinder) adjacent to 
$\wt\gamma$.
\end{remark}

Let $\Pi_\gamma$ denote the open subset of the space of oriented lines in $\rr^2$ 
consisting of the  lines $L(q,u)$ with $(q,u)\in\Gamma^0_+$.   The 
mapping $\La:(q,u)\mapsto L(q,u)$ is a  diffeomorphism 
$$\La:\Gamma^0_+\to\Pi_\gamma$$ 
It extends  to the set $\Gamma^0_+\cup\wt\gamma$ as a  homeomorphism 
sending each point
  $(s,\dot\gamma(s))\in\wt\gamma$ to the tangent line $T_s\gamma$ directed by $\dot\gamma(s)$. 
  \begin{remark} \label{1.9} Let $\mct$ denote the billiard ball map given by reflection from 
  the curve $\gamma$ acting on oriented lines. 
  It is well-known that the billiard ball map $\mct$ restricted to $\Pi_\gamma$ 
    is conjugated  by $\La$ to the product of two involutions
  $$\wt\delta_+:=I\circ\beta=\La^{-1}\circ\mct\circ \La:\Gamma^0_+\to\Gamma.$$
 If the curve $\gamma$  is $C^{\infty}$-smooth, then both involutions $I$ and 
 $\beta$ are $C^{\infty}$-smooth on $\Gamma$ and $\Gamma^0$ respectively. Their 
 product is well-defined and smooth on a neighborhood of the curve $\wt\gamma$ 
 and fixes the points of the curve $\wt\gamma$. Both involutions preserve 
 the canonical symplectic form $\sin\phi ds\wedge d\phi$ on 
 $\Gamma\setminus\wt\gamma$, which is known to be the $\La$-pullback 
 of the standard symplectic form on the space of oriented lines. See \cite{ar2, ar3, mm, 
 melrose1, melrose2, tab95}; see also \cite[subsection 7.1]{gpor}.
 \end{remark} 

Let us recall another representation of  the billiard ball map $\mct$ 
 in a chart where it preserves the standard symplectic form. To do this, consider 
 the orthogonal projection $\pi_\perp:(T\rr^2)|_\gamma\to T\gamma$ sending 
 each vector $u\in T_q\rr^2$ with $q\in\gamma$ to its orthogonal projection to 
 the tangent line $T_q\gamma$. It projects  the unit tangent bundle $\Gamma$ 
 to the unit ball bundle 
 $$T_{\leq1}\gamma:=\{(q,w) \ | q\in\gamma, \ w\in T_q\gamma, \ ||w||\leq1\}.$$
A tangent vector $w=w\frac{\partial}{\partial s}\in T_q\gamma$ will be identified with its coordinate $w=\pm||w||$ in the basic vector $\frac{\partial}{\partial s}$. Thus, 
$\pi_\perp(s,\phi)=(s,\cos\phi)$. 
Consider the following function and differential form on $T\gamma$:
\begin{equation}y:=1-w; \ \omega:=ds\wedge dy.\label{defy}\end{equation}
The form $\omega$ coincides with  the standard symplectic form on the tangent bundle 
$T\gamma$ of the curve $\gamma$ (considered as a Riemannian manifold 
equipped with the metric $|ds|^2$ coming from the standard Euclidean metric on $\rr^2$).

The  curve 
$\wt\gamma=\{(s,\dot\gamma(s)) \ | \ s\in\gamma\}=
\{ w=1\}=\{ y=0\}\subset T\gamma$
 is a component of the boundary $\partial T_{\leq1}\gamma$.
The projection $\pi_\perp$ sends $\Gamma^0_+$ diffeomorphically to a domain 
 in $T_{\leq1}\gamma$ adjacent to $\wt\gamma$. It extends homeomorphically 
 to $\Gamma^0_+\cup\wt\gamma$ as the identity map $Id:\wt\gamma\to\wt\gamma$. 
 Let $\mu_+: \pi_\perp(\Gamma^0_+\cup\wt\gamma)\to \Gamma^0_+\cup\wt\gamma$ be the  inverse 
to the restriction of the projection $\pi_\perp$ to $\Gamma^0_+\cup\wt\gamma$. Set 
\begin{equation}\delta_+:=\pi_\perp\circ\wt\delta_+\circ\mu_+=\pi_\perp\circ \La^{-1}\circ\mct\circ \La\circ\mu_+.\label{defb}\end{equation}

\begin{theorem} \label{tsym} (\cite[subsection 1.5]{tab95}, \cite{melrose1, melrose2, 
ar2, ar3}; see also \cite[theorem 7.3]{gpor}). 
The mapping  $\delta_+:\pi_\perp(\Gamma^0_+)\to T_{\leq1}\gamma$ 
given by (\ref{defb}), 
is symplectic: it preserves the form $\omega=ds\wedge dy$. 
\end{theorem}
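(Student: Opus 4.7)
The plan is to reduce the claim to a direct pullback computation, using the fact already recalled in Remark \ref{1.9} that $\wt\delta_+$ preserves the canonical form $\sin\phi\, ds\wedge d\phi$ on $\Gamma^0_+\setminus\wt\gamma$, this form being the $\Lambda$-pullback of the standard symplectic form $\omega$ from (\ref{defom}) on the space of oriented lines. Granted this, the problem becomes one of comparing $\sin\phi\, ds\wedge d\phi$ on $\Gamma$ with $ds\wedge dy$ on $T\gamma$ under the projection $\pi_\perp$.

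First, I would write down $\pi_\perp$ in coordinates. On $\Gamma$ the adapted coordinates are $(s,\phi)$ with $\phi$ the angle between $u$ and $\dot\gamma(s)$; on $T\gamma$ the adapted coordinates are $(s,w)$ with $w$ the scalar coordinate in the basis $\partial/\partial s$. Since $\pi_\perp(s,\phi)=(s,\cos\phi)$ and $y=1-w$, one has $\pi_\perp^*y=1-\cos\phi$, so $\pi_\perp^*(dy)=\sin\phi\, d\phi$, and therefore
\begin{equation*}
\pi_\perp^*(ds\wedge dy)=\sin\phi\, ds\wedge d\phi.
\end{equation*}
On the open set $\pi_\perp(\Gamma^0_+\setminus\wt\gamma)$, the map $\mu_+$ is the smooth inverse of $\pi_\perp|_{\Gamma^0_+\setminus\wt\gamma}$, so $\pi_\perp\circ\mu_+=\mathrm{Id}$ there.

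Combining these ingredients, on $\pi_\perp(\Gamma^0_+)\setminus\wt\gamma$ I would compute
\begin{equation*}
\delta_+^*(ds\wedge dy)=\mu_+^*\wt\delta_+^*\pi_\perp^*(ds\wedge dy)=\mu_+^*\wt\delta_+^*(\sin\phi\, ds\wedge d\phi)=\mu_+^*(\sin\phi\, ds\wedge d\phi),
\end{equation*}
the last equality being the invariance asserted in Remark \ref{1.9}. Pulling back once more by $\pi_\perp$ the right-hand side yields $ds\wedge dy$, since $\pi_\perp\circ\mu_+=\mathrm{Id}$. This proves $\delta_+^*\omega=\omega$ away from the fixed-point curve $\wt\gamma$.

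The only slightly delicate point is the extension across $\wt\gamma=\{y=0\}$, where $\sin\phi$ vanishes and $\mu_+$ ceases to be $C^\infty$ in the $\phi$-coordinate (the inverse of $y=1-\cos\phi$ has a square-root branching in $\phi$ at $y=0$). However, $\delta_+$ itself is a well-defined smooth map on $\pi_\perp(\Gamma^0_+)$ up to the boundary (this is part of the setup, following Melrose's calculation), and $\omega=ds\wedge dy$ is smooth and nondegenerate on an open neighbourhood of $\wt\gamma$ in $T\gamma$. Hence the identity $\delta_+^*\omega=\omega$, already established on the open dense set $\pi_\perp(\Gamma^0_+)\setminus\wt\gamma$, extends by continuity of the pullback of smooth forms under a smooth map to all of $\pi_\perp(\Gamma^0_+)$. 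I expect this boundary continuity step to be the only mild obstacle; the rest is the two-line coordinate identity $dy=\sin\phi\, d\phi$.
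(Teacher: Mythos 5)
Your proof is correct. Note first that the paper itself gives no proof of Theorem \ref{tsym}: it is quoted as background material with references (\cite{tab95, melrose1, gpor}), so there is nothing internal to compare against; your argument is essentially the standard one found in those sources. The reduction is legitimate and clean: you take as given the other background fact of Remark \ref{1.9} (invariance of $\sin\phi\, ds\wedge d\phi$ under $\wt\delta_+$ on $\Gamma\setminus\wt\gamma$), observe that $\pi_\perp^*(ds\wedge dy)=\sin\phi\, ds\wedge d\phi$ because $y\circ\pi_\perp=1-\cos\phi$, and then the functoriality of pullback applied to $\delta_+=\pi_\perp\circ\wt\delta_+\circ\mu_+$ together with $\pi_\perp\circ\mu_+=\mathrm{Id}$ gives $\delta_+^*\omega=\omega$. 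One remark: your final paragraph on extending the identity across $\wt\gamma$ is unnecessary for the statement as formulated, since the theorem asserts symplecticity only on $\pi_\perp(\Gamma^0_+)$, and $\Gamma^0_+$ is by definition disjoint from $\wt\gamma$ (the tangent directions are not directed to the convex side); the square-root degeneracy of $\mu_+$ at $\phi=0$ therefore never enters. That degeneracy does matter elsewhere in the paper (it is why $\delta_+$ is only ``billiard-like'' rather than smooth at $J$, cf.\ Definition \ref{tdw}), but not for this pullback computation.
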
 

\begin{proposition} \label{psmi} \cite[proposition 7.5]{gpor}. Let $\kappa(s)$ 
denote the (geodesic) curvature of the curve $\gamma$. 
The involutions $I$, $\beta$ and the mappings 
$\wt\delta_+$, $\delta_+$ admit the following (asymptotic) formulas:
 \begin{equation}I(s,\phi)=(s,-\phi), \ \beta(s,\phi)=(s+2\kappa^{-1}(s)\phi+O(\phi^2), -\phi+O(\phi^2)),\label{beti}\end{equation}
 \begin{equation} \wt\delta_+(s,\phi)=(s+2\kappa^{-1}(s)\phi+O(\phi^2), \phi+O(\phi^2)),\label{bilike}\end{equation}
 \begin{equation}\delta_+(s,y)=(s+2\sqrt 2\kappa^{-1}(s)\sqrt y+O(y), y+O(y^{\frac32})).\label{deltsy}\end{equation} 
 The asymptotics are uniform on compact subsets of points $s\in\gamma$, as 
 $\phi\to0$ (respectively, as $y\to0$). 
\end{proposition}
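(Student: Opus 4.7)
My plan is to verify each formula by a direct second-order Taylor expansion of $\gamma$ in its arclength parameter, paying attention to sign conventions and to the precise order of the remainder term in each coordinate.

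The first identity $I(s,\phi)=(s,-\phi)$ requires no calculation: in the orthonormal frame $(\dot\gamma(s),n(s))$ with $n(s)$ the unit normal to $\gamma$, reflection in $T_q\gamma$ flips the $n$-component of any unit vector and hence negates its angle with $\dot\gamma(s)$.

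For $\beta$ I would choose $n(s)$ pointing to the convex side and write
\[
\gamma(s')-\gamma(s)=(s'-s)\dot\gamma(s)+\tfrac12\kappa(s)(s'-s)^2 n(s)+O((s'-s)^3).
\]
The chord direction is therefore $\dot\gamma(s)+\tfrac12\kappa(s)(s'-s)n(s)+O((s'-s)^2)$ up to normalization; comparing with the unit vector $\cos\phi\,\dot\gamma(s)+\sin\phi\,n(s)$ yields $\sin\phi=\tfrac12\kappa(s)(s'-s)+O((s'-s)^2)$, hence $s'-s=2\kappa^{-1}(s)\phi+O(\phi^2)$. At the second endpoint the reference tangent reads $\dot\gamma(s')=\dot\gamma(s)+\kappa(s)(s'-s)n(s)+O((s'-s)^2)$, so the angle of the same chord with $\dot\gamma(s')$ equals $\phi-\kappa(s)(s'-s)+O(\phi^2)=-\phi+O(\phi^2)$. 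This gives (\ref{beti}), and the asymptotic (\ref{bilike}) for $\wt\delta_+=I\circ\beta$ follows by flipping the sign of the second coordinate, as in Remark~\ref{1.9}.

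Finally, for (\ref{deltsy}) I would change coordinate via $y=1-\cos\phi$, which inverts as $\phi=\sqrt{2y}+O(y^{3/2})$, so that $2\kappa^{-1}(s)\phi=2\sqrt 2\,\kappa^{-1}(s)\sqrt y+O(y)$. Writing $\phi_1=\phi+O(\phi^2)$ for the angular coordinate of $\wt\delta_+(s,\phi)$, one then computes $y_1=1-\cos\phi_1=\tfrac12\phi_1^2+O(\phi_1^4)=\tfrac12\phi^2+O(\phi^3)=y+O(y^{3/2})$, as claimed; uniformity on compacta in $s$ is automatic from continuity of $\kappa$. I do not expect any serious obstacle: this is a classical computation, also recorded as \cite[Proposition 7.5]{gpor}. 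The sole delicate point is to retain the sharper bound $\phi_1=\phi+O(\phi^2)$ (rather than merely $O(\phi)$) for $\wt\delta_+$, since this refinement is precisely what produces the sharper remainder $O(y^{3/2})$ (rather than $O(y)$) in the second coordinate of $\delta_+$.
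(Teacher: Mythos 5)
Your computation is correct, and it is the standard argument: the paper itself offers no proof of this proposition but simply cites \cite[proposition 7.5]{gpor}, where essentially this same second-order Taylor expansion in the arclength parameter is carried out. Your sign bookkeeping (normal pointing to the convex side, angle of the chord at the second endpoint equal to $\phi-\kappa(s)(s'-s)+O(\phi^2)=-\phi+O(\phi^2)$) and your observation that the refinement $\phi_1=\phi+O(\phi^2)$ is exactly what yields the remainder $O(y^{3/2})$ in the second coordinate of $\delta_+$ are both accurate, so the proposal can stand as a self-contained verification of the cited formulas.
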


{\bf Case 2).} Let $\gamma$ be parametrized by an interval, but now it does not 
necessarily go to infinity or 
bound a region in the plane. Moreover, we allow $\gamma$ to 
be an immersed curve that may self-intersect. In this case some lines $L(q,u)$ may 
intersect $\gamma$ in more than two points. 
Now the  definition of the subset $\Gamma^0\subset\Gamma$ should be 
modified to be the subset of those $(q,u)\in\Gamma$ 
for which there exists a $q'\in\gamma\cap L(q,u)$ satisfying the 
condition b) from Case 1) and such that the arc $qq'\subset\gamma$ is 
disjoint from the line $L(q,u)$,  injectively immersed (i.e., without self-intersections) 
and satisfies the statement of Footnote 1: the orienting tangent vector $\dot\gamma$ at each its point has acute 
angle with $L(q,u)$. 
 (Here   $q$ and $q'$ may be not the only points of intersection  $\gamma\cap L(q,u)$.) 

\begin{remark} \label{rkimm} For any given  $(q,u)\in\Gamma^0$ the 
point  $q'$ satisfying the conditions from the above paragraph exists, whenever $u$ is close enough to $\dot\gamma(q)$ (dependently on $q$). Whenever it exists, it is unique. 
All the statements and discussion in the previous Case 1) remain valid in our Case 2). 
Now the mapping $\Lambda$ is 
a local diffeomorphism but not necessarily a global diffeomorphism: an oriented line 
intersecting $\gamma$ at more than 
two points (if any)  may correspond to at least two different tuples $(q,u)\in\Gamma^0_+$. 
\end{remark}

\subsection{Generalization to $C^{\infty}$-lifted strongly billiard-like  maps}

In this subsection and in what follows we study the next class of area-preserving mappings introduced in \cite{gpor} generalizing the billiard maps  (\ref{deltsy}). 
\begin{definition} \label{tdw} (see \cite[definition 7.6]{gpor}). Let $(a,b)$ be a (may be (semi) infinite) interval 
in $\rr$ with coordinate $s$. Let $V\subset\rr\times\rr_{+}$ be a domain adjacent to 
the interval $J:=(a,b)\times\{0\}$. A mapping $F:V\cup J\to\rr\times\rr_{\geq0}\subset\rr^2_{s,y}$ is called {\it billiard-like,} if it satisfies the following conditions:

(i) $F: V\cup J\to F(V\cup J)$ is a homeomorphism  fixing the points in $J$; 

(ii) $F|_V$ is a diffeomorphism preserving the standard area form $ds\wedge dy$;

(iii) $F$ has the asymptotics  of the type 
\begin{equation}F(s,y)=(s+w(s)\sqrt y+O(y),y+O(y^{\frac32})),   \text{ as } y\to0; 
\ w(s)>0,\label{fbm}\end{equation}
uniformly on compact subsets in the $s$-interval $(a,b)$;

(iv) the  variable change 
$$(s,y)\mapsto (s,z), \ z=\sqrt y>0$$ 
conjugates $F$ to a  smooth map $\wt F(s,z)$ (called its {\it lifting}) that is also smooth at points of the boundary interval $J$; thus, $w(s)$ is continuous on $(a,b)$.

 If, in addition to conditions (i)--(iv), the latter mapping $\wt F$ 
is a product of two  involutions $I$ and $\beta$ 
 fixing the points of the line $z=0$, 
$$\wt F=I\circ\beta, \ I(s,z)=(s,-z),$$
\begin{equation}\beta(s,z)=(s+w(s)z+O(z^2), -z+O(z^2)), \ \beta^2=Id,
\label{prinv}\end{equation}
then $F$ will be called {\it a (strongly) billiard-like  map.} 

If $F$ is strongly billiard-like, and the corresponding involution $\beta$ (or equivalently, 
the conjugate map $\wt F$) is $C^{\infty}$-smooth, and also $C^{\infty}$-smooth  at the 
points of the boundary interval 
$J$, then  $F$ is 
called {\it $C^{\infty}$-lifted.}
 The above definitions make sense for $F$ being a germ of map at the interval $J$.
\end{definition}

\begin{example} \label{exdel} 
The mapping $\delta_+$ from (\ref{deltsy})  
is strongly billiard-like in the coordinates 
$(s,y)$ with $w(s)=2\sqrt 2\kappa^{-1}(s)$, see  (\ref{beti}),  (\ref{bilike}) 
and (\ref{deltsy}). If the curve $\gamma$ is $C^{\infty}$-smooth, then $\beta$ 
and hence, $\wt\delta_+=I\circ\beta$ are $C^{\infty}$-smooth, and hence, 
$\delta_+$ is $C^{\infty}$-lifted.
\end{example}

\begin{proposition} \label{classinv} 
The class of (germs at $J$ of) $C^{\infty}$-lifted strongly billiard-like maps is invariant under conjugacy by (germs at $J$ of) $C^{\infty}$-smooth 
symplectomorphisms $G:V\cup J\to G(V\cup J)\subset\rr\times\rr_{\geq0}$ 
sending $J$ onto an interval in $\rr\times\{0\}$. Here $V\subset\rr\times\rr_+$ 
is a domain adjacent to $J$. 
 \end{proposition}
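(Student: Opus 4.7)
The strategy is to transfer the conjugation to the double covering $z=\sqrt y$, where the factorization $\wt F = I\circ\beta$ is visible, and to show that $G$ itself lifts to an $I$-equivariant smooth diffeomorphism there. Writing $G(s,y) = (\phi(s,y),\psi(s,y))$, the condition $G(J)\subset\{y=0\}$ gives $\psi(s,0)\equiv 0$, so by Hadamard's lemma $\psi(s,y) = y\,h(s,y)$ with $h$ smooth; $G$ being a diffeomorphism that preserves the upper half-plane forces $h(s,0)>0$, and the symplectic identity $\phi_s\psi_y-\phi_y\psi_s=1$ at $y=0$ (where $\psi_s=0$) gives $\phi_s(s,0)\,h(s,0)=1$, hence $\phi_s(s,0)>0$. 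Substituting $z=\sqrt y$, the second component becomes $z\sqrt{h(s,z^2)}$, so
$$\wt G(s,z) := \bigl(\phi(s,z^2),\ z\sqrt{h(s,z^2)}\bigr)$$
is $C^\infty$ on a two-sided neighborhood of $J$ in $\rr^2_{s,z}$ (this uses $h>0$ decisively), its Jacobian at $z=0$ equals $\phi_s(s,0)\sqrt{h(s,0)}>0$, and one checks directly that $\wt G\circ I=I\circ\wt G$. The same recipe supplies a smooth $I$-equivariant inverse $\wt G^{-1}$, and the identity $\sigma\circ\wt G=G\circ\sigma$ with $\sigma(s,z):=(s,z^2)$ holds by construction.

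Setting $F':= G\circ F\circ G^{-1}$ and $H:=\wt G\circ\wt F\circ\wt G^{-1}$, the relation $\sigma\circ\wt G=G\circ\sigma$ shows that $H$ is a smooth lift of $F'$ (smooth up to $J$), and the $I$-equivariance of $\wt G$ yields
$$H = \wt G\circ I\circ\beta\circ\wt G^{-1} = I\circ\beta',\qquad \beta' := \wt G\circ\beta\circ\wt G^{-1}.$$
Thus $\beta'$ is a smooth involution fixing $J$, both properties being inherited from $\beta$ and $\wt G$. Plugging $\beta(s,z) = (s+w(s)z+O(z^2),\,-z+O(z^2))$ and $\wt G(s,z) = (\phi(s,0)+O(z^2),\sqrt{h(s,0)}\,z+O(z^3))$ into the composition produces
$$\beta'(s,z) = \bigl(s+w'(s)z+O(z^2),\ -z+O(z^2)\bigr),\qquad w'(s) = \frac{\phi_s(s_0,0)\,w(s_0)}{\sqrt{h(s_0,0)}},$$
where $s_0 := \phi(\cdot,0)^{-1}(s)$, and positivity $w'(s)>0$ follows from $\phi_s(s_0,0)>0$, $w(s_0)>0$, $h(s_0,0)>0$. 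This verifies (\ref{prinv}) for $\beta'$.

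Setting $y=z^2$ in $H=I\circ\beta'$ recovers the asymptotic (\ref{fbm}) for $F'$ with the same $w'$; the homeomorphism property of $F'$ and its fixing of $G(J)$ are immediate from the corresponding properties of $F$, and area preservation of $F'$ is automatic from symplectic conjugation. The single non-routine step is the smoothness of the lift $\wt G$: although $y\mapsto\sqrt y$ is not smooth at $0$, the positivity $h(s,0)>0$ is exactly the condition that makes $z\mapsto z\sqrt{h(s,z^2)}$ smooth in $z$. This is where the hypothesis that $G$ is a genuine $C^\infty$ diffeomorphism up to $J$ (not merely a homeomorphism) is used decisively; once $\wt G$ is in hand, everything else is bookkeeping in the smooth $I$-equivariant category.
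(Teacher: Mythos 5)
Your proof is correct and follows essentially the same route as the paper's: both arguments factor the second component of $G$ as $y\,h(s,y)$ with $h(s,0)>0$, lift $G$ to the chart $z=\sqrt y$ as the smooth $I$-equivariant map $(s,z)\mapsto(\phi(s,z^2),z\sqrt{h(s,z^2)})$, conjugate the decomposition $\wt F=I\circ\beta$ to $I\circ\beta'$, and read off the asymptotics (\ref{fbm}) with the new coefficient $w'(s)=\phi_s(s_0,0)\,w(s_0)\,h(s_0,0)^{-1/2}>0$, which matches the paper's formula. The only cosmetic difference is that you invoke the symplectic identity explicitly to get $\phi_s(s,0)>0$, where the paper cites orientation preservation; the substance is identical.
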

\begin{proof} Let $F$ be a $C^{\infty}$-lifted 
strongly billiard-like map, $\wt F=I\circ\beta$ be its 
lifting. Let $V\subset\rr\times\rr_+$ be a domain adjacent to $J$. Let $F$ be defined on $V\cup J$,  
and let $G:V\cup J\to G(V\cup J)\subset\rr\times\rr_{\geq0}$, 
be a $C^{\infty}$-smooth symplectomorphism as above.
Let us denote $G(s,y)=(\wh s(s,y), \wh y(s,y))$.  
One has $\wh y(s,0)\equiv0$, $\frac{\partial\wh s}{\partial s}(s,0)>0$,  $\frac{\partial\wh y}{\partial y}(s,0)>0$, by definition and  orientation-preserving property (symplecticity). Thus, 
$\wh y(s,y)=yg(s,y)$, where $g(s,y)$ is a positive  $C^{\infty}$-smooth function 
on a neighborhood of the interval $J$ in $(\rr\times\rr_{>0})\cup J$. 
The lifting $\wt G$ of the map $G$ to the variables $(s,z)$, $z=\sqrt y$, acts as follows: 
\begin{equation}\wt G:(s,z)\mapsto(\wh s(s,z^2), \wh z(s,z)); \ \ \wh z=\sqrt{\wh y(s,z^2)}=z\sqrt{g(s,z^2)}.\label{wtg}\end{equation} 
The latter square root is well-defined and $C^{\infty}$-smooth. This implies 
that the map $\wt G$ is a $C^{\infty}$-smooth diffeomorphism 
of domains with arcs of boundaries corresponding to $V\cup J$ and $G(V\cup J)$. 
Hence, the lifting $\wt G\circ\wt F\circ\wt G^{-1}$ 
of the conjugate $F_G:=G\circ F\circ G^{-1}$ is a $C^{\infty}$-smooth diffeomorphism 
that is the product of $\wt G$-conjugates of the involutions $I$ and $\beta$. 
One has $\wt G\circ I\circ \wt G^{-1}=I$, by (\ref{wtg}); $F_G$ is a symplectomorphism, since so are $F$ and $G$; 
\begin{equation}G(s,y)=(\wh s,\wh y)=(\wh s(s,0)+O(y), g(s,0)y+O(y^2)),\label{gsyas}\end{equation} 
by diffeomorphicity.  Substituting (\ref{gsyas}) and (\ref{fbm}) to the expression 
$F_G=G\circ F\circ G^{-1}$ and denoting $(s,0):=G^{-1}(\wh s,0)$, we get 
$$F_G(\wh s,\wh y)=(\wh s+\frac{\partial\wh s}{\partial s}(s,0)w(s)g^{-\frac12}(s,0)
(\wh y)^{\frac12}+O(\wh y), 
\wh y+O(\wh y^{\frac32})).$$
This implies that the conjugate $F_G$ has type (\ref{fbm}) and hence, is  
strongly billiard-like. This proves the proposition.
\end{proof}

 \begin{convention} Let $J=(a,b)\times\{0\}\subset\rr^2_{x,y}$. Let $U\subset\{ y>0\}$ be a domain adjacent to $J$. 
 Let $F:U\cup J\to\rr\times\rr_{\geq0}$ be a map fixing all the points of the interval $J$. 
 Let $\wt h:U\cup J\to\rr_{\geq0}$ be a $C^{\infty}$-smooth $F$-invariant function, i.e., $\wt h(z)=\wt h\circ F(z)$ whenever 
 $z,F(z)\in U\cup J$, and let  
 \begin{equation}\wt h|_J\equiv0, \ \ \frac{\partial\wt h}{\partial y}>0.\label{condh}\end{equation}
 Let $\wt h=const$ denote the foliation by connected components of level curves of the function $\wt h$. This  
 is a $C^\infty$-smooth foliation on $U\cup J$, with $J$ being a leaf. It will 
 be called a {\it foliation by $F$-invariant curves.}
 \end{convention}

\begin{theorem} \label{thm3} For every $C^{\infty}$-lifted strongly billiard-like map $F$ there exists a domain 
$U$  adjacent to $J$ such that $U\cup J$ 
admits a $C^{\infty}$-smooth $F$-invariant function $\wt h$ satisfying (\ref{condh}); thus, 
$\wt h=const$ is a foliation by $F$-invariant curves. 
Moreover,  $U$ can be chosen so that there is an infinite-dimensional family of foliations as above
 with distinct boundary germs.
\end{theorem}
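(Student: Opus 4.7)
The plan is to reduce Theorem \ref{thm3} to a computation on an explicit normal form and then to build the family directly on that model.

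First, I would invoke Theorem \ref{addthm3} from Subsection 1.3: every $C^{\infty}$-lifted strongly billiard-like map $F$ is $C^{\infty}$-smoothly symplectically conjugated, on a suitable domain $U_0\cup J_0$ with $J_0$ an interval in $\rr\times\{0\}$ and $U_0\subset\rr\times\rr_+$ adjacent to it, to the model $F_0:(t,z)\mapsto(t+\sqrt z,z)$. A $C^{\infty}$-smooth symplectomorphism fixing the boundary interval sends $F$-invariant functions to $F_0$-invariant functions, preserves $C^\infty$-smoothness up to the boundary, and, because its boundary Jacobian is positive-diagonal, preserves the pair of conditions (\ref{condh}). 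Hence it suffices to exhibit an infinite-dimensional family of $C^{\infty}$-smooth $F_0$-invariant functions $\wt h$ on $U_0\cup J_0$ satisfying (\ref{condh}) with pairwise distinct germs at every point of $J_0$.

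Second, I would deform the obvious invariant $\wt h_0(t,z):=z$ by flat perturbations. Let $\psi(t,z)$ be any $C^{\infty}$-smooth function on $U_0\cup J_0$ that is $F_0$-invariant (equivalently, $\sqrt z$-periodic in $t$ for each $z>0$) and flat at $J_0$ (all partial derivatives vanish on $J_0$). Then $\wt h_\psi:=z+\psi$ is $F_0$-invariant, vanishes on $J_0$, and satisfies $\partial_y\wt h_\psi>0$ near $J_0$ by flatness of $\psi$. To obtain many explicit admissible $\psi$, set
\[
\psi_k(t,z):=e^{-1/z}\sin\!\bigl(2\pi kt/\sqrt z\bigr)\quad (z>0),\qquad \psi_k(t,0):=0,
\]
for $k\in\nn$. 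Each $\psi_k$ is $\sqrt z$-periodic in $t$, hence $F_0$-invariant; and for every $N\in\nn$ the linear combinations $\psi=\sum_{k=1}^N c_k\psi_k$ with $|c_k|$ sufficiently small give an $N$-parameter $C^{\infty}$-smooth family of admissible perturbations, whence a smooth $N$-parameter family of foliations.

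Third, I would verify the distinct-germ property. Fix $(t_*,0)\in J_0$ and any small $z_0>0$ with $(t_*,z_0)\in U_0$. The restriction map $(c_1,\dots,c_N)\mapsto \sum_k c_k\psi_k(\,\cdot\,,z_0)$ to an open $t$-neighborhood of $t_*$ is injective, because the functions $\sin(2\pi kt/\sqrt{z_0})$ are linearly independent on any open $t$-interval. Hence distinct parameter values yield $\wt h_\psi$ with distinct restrictions to arbitrarily small neighborhoods of $(t_*,0)$, so the corresponding foliation germs at $(t_*,0)$ are pairwise distinct. Pulling this family back by the inverse of the normalizing symplectomorphism of Step 1 transfers it to an infinite-dimensional family of foliations by $F$-invariant curves on a domain in the original $(s,y)$-coordinates.

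The main technical obstacle is the $C^{\infty}$-smoothness of each $\psi_k$ up to $J_0$: every derivative $\partial_t^a\partial_z^b\psi_k$ must extend continuously by $0$ across $\{z=0\}$. This is a routine but careful estimate, using that the derivatives of $e^{-1/z}$ decay faster than any negative power of $z$ and therefore dominate the polynomial-in-$z^{-1/2}$ growth produced by repeatedly differentiating the oscillatory factor $\sin(2\pi kt/\sqrt z)$. A secondary point is that $U_0$ should be shrunk (say to $\{0<z<\var\}$) and the $|c_k|$ uniformly bounded so that the monotonicity $\partial_y\wt h_\psi>0$ holds simultaneously for every member of the family on the chosen domain.
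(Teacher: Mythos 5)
Your proposal has a genuine circularity at its foundation. You begin by invoking Theorem \ref{addthm3} to conjugate $F$ to the exact normal form $(t,z)\mapsto(t+\sqrt z,z)$, and then build invariant functions on that model. But Theorem \ref{addthm3} is stated conditionally: ``for every function $\wt h$ from Theorem \ref{thm3}\dots'' --- and its proof in Subsection 2.6 starts by \emph{fixing an exact $F$-invariant function $\wt h$ whose existence is precisely the content of Theorem \ref{thm3}}. The normalizing coordinates $(\tau,\wt h)$ are built out of that invariant function (its level curves are the invariant curves of the normal form). What is available \emph{a priori} is only the Marvizi--Melrose asymptotic first integral (Theorem \ref{thmm}), which gives coordinates in which $F(\tau,h)=(\tau+\sqrt h+\flat(h),\,h+\flat(h))$ --- invariance only up to flat corrections. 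The entire difficulty of the existence part of Theorem \ref{thm3}, which occupies Subsections 2.2--2.4 of the paper, is upgrading this approximate invariant to an exact one: one constructs an invariant function on a neighborhood of a fundamental sector by a partition of unity, extends it along $\wt F$-orbits, and then proves (via careful control of products of $O(1/\phi)$ Jacobian matrices of the form $A+o(\phi^m)$) that the extension remains $C^\infty$ and $\phi$-flat up to the boundary. Your proposal assumes this step away; on the merely approximate normal form your functions $z+\sum c_k\psi_k$ are not invariant, and nothing in your argument produces an exact invariant.

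The second half of your proposal --- perturbing the exact invariant by flat, period-respecting functions such as $e^{-1/z}\sin(2\pi kt/\sqrt z)$ --- is essentially the paper's own argument for non-uniqueness (Subsection 2.9, where $g_\var=\phi+\var\,\psi(\tau/\phi,\phi)$ with $\psi$ $1$-periodic and flat), and is sound in spirit. Two details would still need attention there: (a) distinctness of the \emph{functions} near a boundary point does not by itself give distinctness of the \emph{foliations} (two first integrals related by a post-composition define the same foliation); the paper resolves this via the normalization $g(0,\wt h)\equiv\wt h$ and Proposition \ref{pmodgerm}, which your $\psi_k$ happen to satisfy but which you should invoke explicitly. (b) When $J$ is an infinite interval, shrinking the domain to a strip $\{0<z<\var\}$ does not control the polynomial-in-$t$ growth of the derivatives of $\psi(t/\sqrt z,z)$; the paper handles this with an exhaustion of $J$ by compact subintervals and a union of rectangles of decreasing heights (Claim 1 of Subsection 2.9). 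Neither of these is fatal, but the circular use of Theorem \ref{addthm3} is: without an independent proof of the exact normal form, the existence statement remains unproved.
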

\begin{theorem} \label{addthm3} For every function $\wt h$ from 
 Theorem \ref{thm3},  replacing it by its post-composition with a $C^\infty$-smooth 
 function of one variable (which does not  
 change the foliation $\wt h=const$) one can achieve that there exists a $C^{\infty}$-smooth 
 function $\tau=\tau(s,y)$ 
and a domain $U\subset\{ y>0\}$ adjacent to $J$ such that  $(\tau,\wt h)$ are symplectic 
coordinates on $U\cup J$ and in these coordinates 
\begin{equation}F(\tau,\wt h)=(\tau+\sqrt{\wt h},\wt h).\label{snform}\end{equation}
\end{theorem}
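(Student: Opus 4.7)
The plan is to build symplectic action-angle-type coordinates $(\tau_0,\wt h)$ in which $F$ acts by a $\wt h$-dependent shift, then absorb this shift function into $\wt h$ via a one-variable reparametrization so that the shift becomes $\sqrt{\wt h}$. Starting from the $F$-invariant function $\wt h$ provided by Theorem \ref{thm3}, I consider its Hamiltonian vector field $X_{\wt h}$ with respect to $\omega=ds\wedge dy$. Writing $\wt h=h_1(s)y+O(y^2)$ with $h_1(s):=\partial_y\wt h|_{y=0}>0$, the field $X_{\wt h}$ is $C^\infty$ on $U\cup J$ and restricts to $h_1(s)\partial_s$ on $J$. Fixing a local transversal $\Sigma:=\{s=s_0\}$ and letting $\tau_0(p)$ be the flow time from $\Sigma$ to $p$ along $X_{\wt h}$ gives $C^\infty$-smooth coordinates $(\tau_0,\wt h)$ on a subdomain $U'\cup J'$ with $J'\subset J$, and a direct check using $\iota_{X_{\wt h}}\omega=d\wt h$, $d\tau_0(X_{\wt h})=1$, $d\wt h(X_{\wt h})=0$ yields $d\tau_0\wedge d\wt h=\omega$. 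Since $F$ preserves both $\wt h$ and $\omega$, it takes the form $F(\tau_0,\wt h)=(\tau_0+\rho(\wt h),\wt h)$ for some shift function $\rho$.

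The main technical step is the regularity claim
\[
\rho(K)=\sqrt K\cdot\tilde\rho(K),\qquad\tilde\rho\in C^\infty([0,\eps)),\ \tilde\rho(0)>0.
\]
Pass to the lifted coordinate $z=\sqrt y$: the lift $\wt F=I\circ\beta$ is $C^\infty$ across $z=0$, and $H(s,z):=\wt h(s,z^2)$ is $C^\infty$ and even in $z$. The Hamiltonian field $X_H$ of $H$ with respect to the lifted form $2z\,ds\wedge dz$ extends smoothly to $z=0$ because the apparent $1/z$-singularity cancels since $\partial_z H=2h_1(s)z+O(z^3)$; by evenness of $H$, $I_*X_H=X_H$, so its flow $\Phi^t$ commutes with $I$. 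Define $t(s,z)$ to be the small flow time from $(s,z)$ to $\wt F(s,z)$; by the implicit function theorem it is $C^\infty$ with $t(s,0)=0$. The reversibility $I\circ\wt F\circ I=\wt F^{-1}$---a direct consequence of $\wt F=I\circ\beta$ and $\beta^2=I^2=\mathrm{Id}$---combined with $I$-equivariance of $\Phi^t$, forces $t(s,-z)=-t(s,z)$. By Hadamard's lemma, $t(s,z)=z\,\tilde t(s,z^2)$ with $\tilde t$ $C^\infty$. Solving $H(s,z)=K$ for $z>0$ yields $z=\sqrt K\cdot q(s,K)$ with $q$ $C^\infty$ on $K\geq 0$ and $q(s,0)=h_1(s)^{-1/2}$, whence $\rho(K)=t(s,z(s,K))=\sqrt K\cdot q(s,K)\tilde t(s,K\,q(s,K)^2)$ is $\sqrt K$ times a $C^\infty$ function of $(s,K)$; the $s$-independence of $\rho$ (forced by area preservation) packages the smooth factor as $\tilde\rho(K)$, with $\tilde\rho(0)=w(s)h_1(s)^{-3/2}>0$ recovered from the asymptotic (\ref{fbm}).

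Finally, solve the ODE $\phi'(u)\sqrt{\phi(u)}=\rho(u)$, $\phi(0)=0$; equivalently $\phi(u)^{3/2}=\tfrac{3}{2}\int_0^u\rho(v)\,dv$. The previous step gives $\int_0^u\rho(v)\,dv=u^{3/2}R(u)$ with $R$ $C^\infty$ and $R(0)=\tfrac{2}{3}\tilde\rho(0)>0$, so $\phi(u)=u\cdot(\tfrac{3}{2}R(u))^{2/3}$ is a $C^\infty$-diffeomorphism of neighborhoods of $0$ in $[0,\infty)$ with $\phi'(0)>0$. Replace $\wt h$ by $\wt h':=\phi(\wt h)$ (a valid post-composition that does not change the foliation) and set $\tau:=\tau_0/\phi'(\wt h)$. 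A direct wedge computation yields $d\tau\wedge d\wt h'=d\tau_0\wedge d\wt h=\omega$ (the $\tau_0\phi''/(\phi')^2\,d\wt h$-contribution self-wedges to zero), so the new coordinates are symplectic; the $F$-shift becomes $\rho(\wt h)/\phi'(\wt h)=\sqrt{\phi(\wt h)}=\sqrt{\wt h'}$, delivering (\ref{snform}) upon renaming $\wt h'\mapsto\wt h$. The main obstacle is the factorization $\rho(K)=\sqrt K\tilde\rho(K)$ with $\tilde\rho$ \emph{smooth at $K=0$}---rather than only smooth in $\sqrt K$, which would force half-integer-power asymptotics for $\phi$ and break $C^\infty$-smoothness; this stronger regularity genuinely relies on the reversibility $I\circ\wt F\circ I=\wt F^{-1}$ inherited from the involution-product structure in Definition \ref{tdw}.
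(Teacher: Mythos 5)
Your proof is correct and follows essentially the same route as the paper's: pass to the time--energy coordinates of the Hamiltonian flow of $\wt h$, in which preservation of $\omega$ and of $\wt h$ forces $F$ to be a shift $\theta\mapsto\theta+\rho(\wt h)$; establish the factorization $\rho(K)=\sqrt K\,\tilde\rho(K)$ with $\tilde\rho$ smooth and positive at $0$; and renormalize by solving $\phi'\sqrt{\phi}=\rho$ (the paper does exactly this via Claim 1 and the end of the proof of Theorem \ref{thmm} in Subsection 2.1, applied again in Subsection 2.6). The one genuine difference is how the square-root factorization is obtained: the paper first conjugates to the new symplectic chart, uses Proposition \ref{classinv} to retain the involution structure there, lifts to $\zeta=\sqrt t$ and reads off $r(\zeta)+r(-\zeta)=\flat(\zeta)$ from $\beta^2=Id$, leaving a flat correction to absorb; you instead stay in the original lifted chart $(s,z)$ and derive \emph{exact} oddness of the flow-time function $t(s,z)$ from the reversibility $I\circ\wt F\circ I=\wt F^{-1}$ together with $I$-equivariance of the flow of the even Hamiltonian $H(s,z)=\wt h(s,z^2)$, then apply Hadamard/Whitney. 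This is a legitimate and arguably cleaner alternative for that sub-lemma. Two points to tighten: the flow-time coordinate built from the single transversal $\{s=s_0\}$ does extend to a domain adjacent to \emph{all} of $J$ (for each level $\wt h=K$ take the arc through the transversal and saturate), so there is no need to retreat to a proper subinterval $J'\subset J$ — the theorem asserts the normal form on a domain adjacent to the whole of $J$; and you should note explicitly that $\wt F$ preserves the level curves of $H$ on both sides of $\{z=0\}$ and maps each side to itself (this follows from evenness of $H$ and the same reversibility identity), so that the flow time $t(s,z)$ is indeed defined and smooth on a two-sided neighborhood, which is what the oddness argument requires.
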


\begin{definition} Let 
 $V\subset \rr\times\rr_{+}\subset\rr^2_{s,y}$ be a domain 
 adjacent to an interval $J=(a,b)\times\{0\}$. A $C^{\infty}$-smooth 
 function $f(s,y)$ on $V\cup J$ is {\it $y$-flat,} if $f(s,0)\equiv0$,  $f(s,y)$ tends to zero with all its partial derivatives, as $y\to0$, and the latter convergence 
 is uniform on compact subsets in the $s$-interval $(a,b)$ for the function $f$ 
 and for each its individual derivative. 
 \end{definition}
 \begin{remark} \label{remfl} In the  conditions of the above definition let $(x,h)$ be new 
 $C^{\infty}$-smooth   coordinates on $V\cup J$ with $h(s,0)\equiv0$. 
 Then each $y$-flat function is $h$-flat 
 and vice versa. This follows from definition.
 \end{remark}

The proof of Theorem \ref{thm3} uses  Marvizi--Melrose result \cite[theorem (3.2)]{mm} stating a formal analogue of Theorem \ref{thm3}: existence of a $F$-invariant 
formal power series $\sum_kh_k(s)y^s$, see Theorem \ref{thmm} below. It implies 
that in appropriate coordinates $(\tau,h)$ the map $F$ takes the form 
$F(\tau,h)=(\tau+\sqrt h+\flat(h), h+\flat(h))$. Here $\flat(h)$ is an $h$-flat function, 
see the above definition. In the coordinates $(\tau,\phi)$, $\phi=\sqrt h$, the lifted 
map $\wt F$  takes the form 
 \begin{equation}\wt F(\tau,\phi)=(\tau+\phi+\flat(\phi), \phi+\flat(\phi)).\label{tauphi}
\end{equation}
We prove  existence  of a $C^{\infty}$-smooth $\wt F$-invariant function $\wt \phi$ with  
$\wt\phi-\phi=\flat(\phi)$ (the next theorem), and then deduce the existence statements in 
Theorems \ref{thm3}, \ref{thm1}, \ref{thm2}.
\begin{theorem} \label{thm33} Let $V\subset\rr_{\tau}\times(\rr_+)_{\phi}$ be a domain adjacent to 
an interval $J=(a,b)\times\{0\}$. Let $\wt F:V\cup J\to \rr_{\tau}\times(\rr_{\geq0})_{\phi}$ 
be a $C^{\infty}$-smooth mapping of type (\ref{tauphi}). 
(Here we do not assume any area-preserving 
property.) 

1) There exists a domain $W\subset V$ 
adjacent to $J$ and an $\wt F$-invariant $C^{\infty}$-smooth function $\wt\phi$ on $W\cup J$  
of the type $\wt\phi(\tau,\phi)=\phi+\flat(\phi)$; $\frac{\partial\wt\phi}{\partial\phi}>0$. 

2) For every function $\wt\phi$ as above one can shrink the domain $W$ 
(keeping it adjacent to $J$) so that there exists a $C^{\infty}$-smooth function $\wt\tau(\tau,\phi)=\tau+\flat(\phi)$ 
such that the map $(\tau,\phi)\mapsto(\wt\tau,\wt\phi)$ is a $C^{\infty}$-smooth diffeomorphism on 
$W\cup J$ that conjugates $\wt F$ to the map 
 \begin{equation}\wh F: (\wt \tau,\wt \phi)\mapsto(\wt\tau+\wt\phi,\wt\phi).\label{tauphiob}
\end{equation}

3) There exist continuum of functions $\wt\phi$ satisfying Statement 1) 
 such that  the corresponding foliations $\wt\phi=const$ 
are $C^{\infty}$-smooth on the same subset $W\cup J$ and form an infinite-dimensional family of 
foliations with distinct boundary germs.
\end{theorem}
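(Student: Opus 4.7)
My plan is to prove the three parts sequentially, with Part~1 carrying the main technical load.

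For Part~1, I would seek $\wt\phi$ in the form $\wt\phi=\phi+u$ with $u$ a $\phi$-flat function on a suitably chosen subdomain $W\subset V$ adjacent to $J$; writing $\wt F(\tau,\phi)=(\tau+\phi+a,\phi+b)$ with $a,b$ flat in $\phi$, the invariance condition $\wt\phi\circ\wt F=\wt\phi$ reduces to the cohomological equation
\begin{equation*}
u\circ\wt F-u=-b.
\end{equation*}
My strategy is first to solve the analogous equation over the integrable shift $\wt F_0(\tau,\phi)=(\tau+\phi,\phi)$ by backward summation. Fixing a transverse section $\sigma=\{\tau=\tau_0\}\cap W$, I prescribe $\wt\phi|_\sigma$ (for mere existence the choice $\wt\phi|_\sigma:=\phi|_\sigma$ suffices) and extend by backward iteration of $\wt F_0$: for $p=(\tau,\phi)\in W$ with $\tau>\tau_0$, one needs $N(\tau,\phi)\sim(\tau-\tau_0)/\phi$ backward shift-steps to return to $\sigma$, and the cumulative correction is a sum of $N$ terms each of size $O(\phi^M)$ for every $M$, hence $O(\phi^{M-1})$, still flat. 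The integer discontinuity of $N$ is smoothed by a partition-of-unity scheme. For the genuine perturbed map $\wt F$, I would correct iteratively: starting from the shift-invariant approximation $\wt\phi_0$, at step $n$ the error $e_n=\wt\phi_n\circ\wt F-\wt\phi_n$ is flat, and subtracting a shift-cohomological solution for $e_n$ (by the same backward-summation recipe) yields $\wt\phi_{n+1}$ with a flatter error. The iteration converges in $C^\infty$ on $W$, producing the desired $\wt F$-invariant $\wt\phi$.

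For Part~2, given any $\wt\phi$ from Part~1, I would set $\wt\tau:=\tau+w$ with $w$ flat, and the requirement $\wt\tau\circ\wt F-\wt\tau=\wt\phi$ becomes
\begin{equation*}
w\circ\wt F-w=(\wt\phi-\phi)-a(\tau,\phi),
\end{equation*}
another cohomological equation with flat right-hand side (both summands are flat). The same construction (shift-based backward summation plus iterative correction) produces a flat solution $w$. The map $(\tau,\phi)\mapsto(\wt\tau,\wt\phi)$ then has Jacobian equal to the identity plus a flat matrix, hence is a $C^\infty$-smooth local diffeomorphism on $W\cup J$; global injectivity on a possibly smaller $W$ follows by standard continuity arguments.

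For Part~3, the infinite-dimensional family arises from the freedom in the choice of the boundary data $\wt\phi|_\sigma$ on the transverse section. Any smooth function on $\sigma$ of the form $\phi+\flat(\phi)$ with positive $\phi$-derivative provides valid initial data, and two such choices differing at some point of $\sigma\cap J$ yield $\wt F$-invariant extensions with distinct germs at the corresponding point of $J$; propagating along $\wt F$-orbits spreads this distinctness to every point of $J$. Any smooth finite-parameter family of such boundary data produces, by the same construction, a smooth finite-parameter family of foliations, yielding the required infinite-dimensional family.

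The main obstacle in this plan is the $C^\infty$-regularity of $\wt\phi$ up to and including $J$. In the interior of $W$ smoothness is immediate, but near $J$ the number of backward iterations tends to infinity (as $\phi\to0$), so derivatives of all orders of $u$ must be controlled uniformly. This requires an induction on the order of derivatives, exploiting that every partial derivative of $b$ is $\phi$-flat, that $\wt F$ differs from $\wt F_0$ by a flat diffeomorphism along $J$, and that the smoothing of the integer-valued cut-off $N$ respects flatness at all orders. Equally delicate is the convergence of the iterative scheme in $C^\infty$, which requires a careful Borel-summation-type argument ensuring the summed corrections assemble into a smooth flat function.
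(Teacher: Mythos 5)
Your Part 1 has the right overall shape --- prescribe data transverse to the dynamics, extend by backward iteration, smooth the integer cut-off $N$ with a partition of unity, and fight for uniform $C^\infty$ control as $N\sim 1/\phi\to\infty$ --- and you correctly identify that last point as the crux; the paper resolves it by showing that the product of the $N$ Jacobians $d\wt F(\wt F^j(x))=A+o(\phi^m)$ equals $A^{N}+o(\phi^m)$ (Propositions \ref{propasm}, \ref{propasm2}), and similarly for higher jets. The genuine gap is your detour through the model shift $\wt F_0$ followed by an iterative correction: once the error $e_n=\wt\phi_n\circ\wt F-\wt\phi_n$ is $\phi$-flat, ``flatter'' has no content --- flat functions carry no filtration that drives a Newton-type scheme --- and you supply no norm in which the successive corrections are summable, so convergence in $C^\infty$ is unsubstantiated (and no Borel-summation device is actually exhibited). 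The detour is also unnecessary: the paper performs the gluing and the backward summation directly for $\wt F$ itself, setting $\wt\phi=\rho_1(\tau/\phi)\,\phi+\rho_2(\tau/\phi)\,\phi\circ\wt F^{-1}$ on a sector containing a fundamental domain and then spreading it along the actual $\wt F$-orbits, so no correction step ever arises; Part 2 is handled the same way with $\wt\tau$ in place of $\wt\phi$.

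Part 3 has a more serious conceptual flaw. Your moduli are boundary data $\wt\phi|_\sigma=\phi+\flat(\phi)$ on a \emph{one-dimensional} section $\sigma=\{\tau=\tau_0\}$. Any two such data are related by post-composition with a one-variable diffeomorphism $f=\operatorname{id}+\flat$; since $f\circ\wt\phi$ is $\wt F$-invariant whenever $\wt\phi$ is, and has the same level curves, a canonical extension of these data produces one and the same foliation for every choice. (Whatever differences survive come only from the non-commutation of your partition-of-unity smoothing with post-composition, and you give no argument that these produce distinct germs at $J$; also note $\sigma\cap J$ is a single point at which all your data vanish, so ``differing at some point of $\sigma\cap J$'' is not a usable hypothesis.) The correct moduli are functions of \emph{two} variables: by Theorem \ref{tmog} they are germs of $h$-flat functions $\psi(t,h)$ on the cylinder $S^1\times\rr_{\geq0}$ with $\psi(0,h)=0$, i.e.\ flat data on the two-dimensional fundamental domain. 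The paper realizes the family explicitly, after normalizing $\wt F$ to $(\tau,\phi)\mapsto(\tau+\phi,\phi)$ via Part 2, as $g_\var=\phi+\sum_k\var_k c_k\,\psi^k(\tau/\phi,\phi)$, and still must shrink $W$ so that all first derivatives of $\psi(\tau/\phi,\phi)$ are uniformly small in order to get every member of the family regular on one common domain --- a uniformity issue your sketch also passes over.
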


\subsection{Unique determination of jets. Space of germs of foliations. Non-uniqueness}

\begin{theorem} \label{unget} All the germs of foliations satisfying the statements of any of 
Theorems \ref{thm1}, \ref{thm2}, \ref{thm2closed}, \ref{thm3}, \ref{thm33} at the corresponding boundary curve $\gamma$, $J$
are flatly close to each other near the boundary. That is, they  have the same $n$-jet for every $n$ at each point of the  boundary.
\end{theorem}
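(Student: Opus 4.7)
The plan is to reduce every case to the setting of Theorem \ref{thm3} and then to use the symplectic normal form of Theorem \ref{addthm3} to analyse the formal invariants of $F$ directly. For a foliation by caustics as in Theorems \ref{thm1}, \ref{thm2} or \ref{thm2closed}, Remark \ref{rkab} (for the closed case, after passing to the universal cover as in Theorem \ref{thm2closed}) together with the projection $\pi_\perp$ and the conjugacy $\La\circ\mu_+$ of Subsection 1.2 translate the foliation into a $C^\infty$-smooth foliation by $\delta_+$-invariant curves in the chart $(s,y)$, where $\delta_+$ is the $C^\infty$-lifted strongly billiard-like map of Example \ref{exdel}. All the intermediate maps are $C^\infty$-smooth diffeomorphisms up to $J$, so jets of foliations at corresponding boundary points transport faithfully. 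It therefore suffices to prove coincidence of jets for any two $F$-invariant functions $\wt h_1,\wt h_2$ satisfying (\ref{condh}) for a single $C^\infty$-lifted strongly billiard-like map $F$.

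Fix one such $\wt h$ and invoke Theorem \ref{addthm3} to pass to symplectic coordinates $(\tau,\wt h)$ on some $U\cup J$ in which $F(\tau,\wt h)=(\tau+\sqrt{\wt h},\wt h)$. Any other smooth $F$-invariant, written in these coordinates as $f(\tau,\wt h)$, satisfies $f(\tau+\sqrt{\wt h},\wt h)=f(\tau,\wt h)$. The key formal step is to show that its Taylor series at $\wt h=0$ has only $\tau$-constant coefficients. Setting $\phi:=\sqrt{\wt h}$ and writing the formal expansions $f(\tau,\phi^2)\sim\sum_{k\geq 0}f_k(\tau)\phi^{2k}$ and $f_k(\tau+\phi)\sim\sum_{j\geq 0}f_k^{(j)}(\tau)\phi^j/j!$, invariance becomes the formal identity
\begin{equation*}
\sum_{k,j\geq 0}\frac{f_k^{(j)}(\tau)}{j!}\,\phi^{2k+j}\;=\;\sum_{k\geq 0}f_k(\tau)\,\phi^{2k}.
\end{equation*}
Matching the coefficient of $\phi^{2l+1}$ and inducting on $l\geq 0$ forces $f_l'(\tau)\equiv 0$, hence each $f_k$ is constant on $J$; the identities at even powers of $\phi$ are then automatic. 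This is the Marvizi--Melrose uniqueness of the formal invariant expressed in these coordinates.

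Consequently, for every smooth $F$-invariant $f$ there exists a formal power series $g\in\rr[[t]]$ with $f(\tau,\wt h)\sim g(\wt h)$ as formal Taylor series at every point of $J$. Applied to $\wt h_1,\wt h_2$ this yields formally invertible $g_1,g_2\in\rr[[t]]$ (since (\ref{condh}) forces $g_j(0)=0$ and $g_j'(0)>0$), so $\wt h_2\sim(g_2\circ g_1^{-1})(\wt h_1)$ as formal power series at each point of $J$. This is precisely the statement that the foliations $\{\wt h_1=c\}$ and $\{\wt h_2=c\}$ share their $n$-jets for every $n$ at every boundary point, which proves the theorem in all the stated cases. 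The delicate part, which I would verify most carefully, is checking that the reductions through $\La$, $\mu_+$ and the symplectomorphism of Theorem \ref{addthm3} really do preserve the normalization $\partial_y\wt h>0$ on $J$; this reduces to smoothness and positivity of Jacobians up to the boundary, both of which are built into the constructions of Subsection 1.2 and Proposition \ref{classinv}.
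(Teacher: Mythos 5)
Your proposal is correct and follows essentially the same route as the paper: reduce the geometric cases to invariant functions of a $C^\infty$-lifted strongly billiard-like map, pass to the normal form $(\tau,\wt h)\mapsto(\tau+\sqrt{\wt h},\wt h)$ of Theorem \ref{addthm3}, and prove that any smooth invariant is formally a power series in $\wt h$ alone, so that any two invariant foliations differ by a flat correction. The only difference is cosmetic: you establish the formal uniqueness by a direct induction on the coefficients of odd powers of $\phi=\sqrt{\wt h}$, whereas the paper argues by contradiction via the lowest $(1,2)$-quasihomogeneous part of a putative $\tau$-dependent invariant (and handles the map $(\tau,\phi)\mapsto(\tau+\phi,\phi)$ of Theorem \ref{thm33} by the same argument with $(1,1)$-quasihomogeneity, a case your induction also covers verbatim).
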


The statement of Theorem \ref{unget} follows from Marvizi--Melrose result \cite[theorem (3.2)]{mm} recalled below as Theorem \ref{thmm}. 
For completeness of presentation, we will give a proof of Theorem \ref{unget} in Subsection 2.8. 

Let us now describe the space of jets at $J$ of foliations satisfying the statements of  Theorem \ref{thm3}. 
Recall that there exist  coordinates $(\tau,\wt h)$ in which $J=\{\wt h=0\}$ and $F$ acts  as in  (\ref{snform}):  
$F:(\tau,\wt h)\mapsto(\tau+\sqrt{\wt h},\wt h)$. Fix these coordinates $(\tau,\wt h)$. 
Without loss of generality we consider that $0\in J$. 

Consider the foliation $\wt h=const$. 
Let  $\mcf$ be another  $C^\infty$-smooth foliation by $F$-invariant curves defined on a domain $V$ in the upper half-plane $\{\wt h>0\}$ 
adjacent to $J$ that extends $C^\infty$-smoothly to $J$ with $J$ being a leaf. It is a foliation by level curves 
of an $F$-invariant function $g(\tau,\wt h)=\wt h+\flat(\wt h)$, which follows from Theorem \ref{unget}. We can and will normalize $g$ 
so that 
\begin{equation} g(\tau,\wt h)=\wt h+\flat(\wt h), \  \ g(0,\wt h)\equiv \wt h.\label{gnorma}\end{equation}
\begin{remark} The above normalization can be achieved by replacing the function $g$ by its post-composition with a function $\phi+\flat(\phi)$ 
of one variable $\phi$. 
Each foliation from Theorem \ref{thm3} admits a unique $F$-invariant first integral  $g$  as in (\ref{gnorma}) and vice versa: for every 
 $F$-invariant function $g$ as in (\ref{gnorma}) the foliation $g=const$ satisfies the statement of Theorem \ref{thm3}. 
\end{remark}

\begin{proposition} \label{pmodgerm} 1) For every $C^\infty$-smooth function $g$  on a domain  $V\subset\{\wt h>0\}$ adjacent to $J$ that 
extends $C^\infty$-smoothly to $J$, satisfies (\ref{gnorma}) and is 
 invariant under the mapping $F:(\tau,\wt h)\mapsto(\tau+\sqrt{\wt h},\wt h)$ there exist a $\delta>0$ and a unique 
 $C^\infty$- smooth $\wt h$-flat function $\psi(t,\wt h)$ on $S^1\times[0,\delta)$, 
 $S^1=\rr_{t}\slash\zz$ such that 
 \begin{equation} \psi(0,\wt h)=0,\label{pstar}\end{equation} 
 \begin{equation} g(\tau,\wt h)=\wt h+\psi\left(\frac{\tau}{\sqrt{\wt h}},\wt h\right).\label{gth}\end{equation}
 Here we treat $\psi(t,\wt h)$ as a function of two variables that is 1-periodic in $t$.
 Conversely, for every $\delta>0$ every $\wt h$-flat  function $\psi$ on the cylinder $S^1\times[0,\delta)$ 
 satisfying (\ref{pstar})  corresponds to some function 
 $g$ as above via formula (\ref{gth}), defined on $V\cup J$ with  $J=\rr\times\{0\}$ and $V=\{0<\wt h<\delta\}$. 
 
 2) The analogous statements hold for the map $F:(\tau,\phi)\mapsto(\tau+\phi,\phi)$ and the function 
 \begin{equation} g(\tau,\phi)=\phi+\psi\left(\frac{\tau}{\phi},\phi\right).\label{gth2}\end{equation}
\end{proposition}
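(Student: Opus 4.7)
The plan is to define $\psi$ as the forced candidate, verify it has the claimed regularity by exploiting the flatness of $g-\wt h$, and then run the analogous construction in the reverse direction. Set
\[
\psi(t,\wt h):=g(t\sqrt{\wt h},\wt h)-\wt h \quad \text{for } \wt h>0.
\]
Then $\psi$ is automatically the only candidate compatible with (\ref{gth}), giving uniqueness. Periodicity is immediate from $F$-invariance of $g$: substituting $\tau=t\sqrt{\wt h}$ in $g(\tau+\sqrt{\wt h},\wt h)=g(\tau,\wt h)$ yields $\psi(t+1,\wt h)=\psi(t,\wt h)$, so $\psi$ descends to $S^1\times(0,\delta)$. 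The normalization $\psi(0,\wt h)=g(0,\wt h)-\wt h=0$ is built in by (\ref{gnorma}).

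The main technical step, and the only nontrivial one, is to show that $\psi$ extends $C^\infty$-smoothly and $\wt h$-flatly to $\wt h=0$. Write $f(\tau,\wt h):=g(\tau,\wt h)-\wt h$, which by hypothesis is $\wt h$-flat, i.e., all partial derivatives $\partial_\tau^a\partial_{\wt h}^b f$ tend to $0$ uniformly on compact $\tau$-sets as $\wt h\to 0$. By Faà di Bruno applied to $\psi(t,\wt h)=f(t\sqrt{\wt h},\wt h)$, any mixed derivative $\partial_t^a\partial_{\wt h}^b\psi$ is a finite sum of terms of the shape
\[
c_{a,b,j,k}\,t^{m}\,\wt h^{-\ell/2}\,(\partial_\tau^{j}\partial_{\wt h}^{k}f)(t\sqrt{\wt h},\wt h),
\]
with $j\geq 1$ whenever $\ell\geq 1$. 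Since $f$ is $\wt h$-flat, each factor $\partial_\tau^{j}\partial_{\wt h}^{k}f$ vanishes to infinite order in $\wt h$ uniformly on the compact range $|\tau|\leq\sqrt{\delta}$ swept out by $t\in[0,1]$ and $\wt h\in(0,\delta)$. Thus each such term tends to zero with all negative powers of $\wt h$ swallowed, proving that $\psi$ and every derivative extend continuously by $0$ to $\wt h=0$. Hence $\psi$ is $C^\infty$-smooth and $\wt h$-flat on $S^1\times[0,\delta)$, for $\delta>0$ chosen so that $\{0<\wt h<\delta\}\subset V$.

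For the converse direction, fix any $\wt h$-flat $\psi\in C^\infty(S^1\times[0,\delta))$ with $\psi(0,\wt h)=0$, and define $g(\tau,\wt h):=\wt h+\psi(\tau/\sqrt{\wt h},\wt h)$ on $V\cup J$ with $V=\rr\times(0,\delta)$ and $J=\rr\times\{0\}$. The same Faà di Bruno argument in reverse shows that $g-\wt h$ is $C^\infty$-smooth on $V\cup J$ and $\wt h$-flat along $J$: each derivative picks up negative half-powers of $\wt h$, but the flatness of $\psi$ absorbs them. The $1$-periodicity of $\psi(\cdot,\wt h)$ gives $g(\tau+\sqrt{\wt h},\wt h)=g(\tau,\wt h)$, i.e., $F$-invariance, and $g(0,\wt h)=\wt h+\psi(0,\wt h)=\wt h$ gives the normalization (\ref{gnorma}). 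This completes the bijection in Statement 1. Statement 2 follows by the identical argument with $\sqrt{\wt h}$ replaced by $\phi$ throughout: setting $\psi(t,\phi):=g(t\phi,\phi)-\phi$, the change of variables $\tau\mapsto t\phi$ is smooth up to $\phi=0$ (no square-root singularity arises), so in fact the regularity verification is simpler, while $F$-invariance and the normalization give periodicity and $\psi(0,\phi)=0$ exactly as before.
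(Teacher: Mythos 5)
Your proof follows the same route as the paper's own: define $\psi(t,\wt h):=g(t\sqrt{\wt h},\wt h)-\wt h$, derive $1$-periodicity from $F$-invariance and the normalization $\psi(0,\wt h)=0$ from (\ref{gnorma}), and invert the substitution for the converse. The only difference is that you spell out the chain-rule/flatness bookkeeping that the paper dispatches with the single sentence ``this follows from smoothness and $\wt h$-flatness of the function $g(\tau,\wt h)-\wt h$,'' which is added detail rather than a different argument.
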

\begin{theorem} \label{tmodgerm} Every germ at $J$ of  $C^\infty$-smooth  foliation $\mcf$  by invariant curves under a 
map $F$ of one of the types (\ref{snform}) or (\ref{tauphiob}) is defined by a unique germ 
at $S^1\times\{0\}$ of $C^\infty$-smooth $h$-flat function $\psi(t,h)$, $\psi: S^1\times\rr_{\geq0}\to\rr$, $\psi(0,h)=0$, so that $\mcf$ is the foliation by level curves of the corresponding function $g$ given 
 by (\ref{gth}) or (\ref{gth2}) respectively. 
 Conversely, each germ of function $\psi$ as above defines a germ of foliation as above at $J$. 
 \end{theorem}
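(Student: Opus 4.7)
The plan is to derive Theorem \ref{tmodgerm} directly from Proposition \ref{pmodgerm} by passing to germs, together with the uniqueness of a normalized first integral noted in the remark preceding that proposition. The substance of the claim already lies in Proposition \ref{pmodgerm}; what remains is to match the germ-level bijection on both sides and to identify $F$-invariance with $1$-periodicity in the angular variable.

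For the foliation-to-$\psi$ direction, I would start with a germ of $C^\infty$-smooth foliation $\mcf$ by $F$-invariant curves at $J$ and choose a representative on some domain $V\cup J$ with $V\subset\{\wt h>0\}$ adjacent to $J$. By Theorem \ref{unget}, any $C^\infty$-smooth $F$-invariant first integral of $\mcf$ is of the form $\wt h+\flat(\wt h)$. Post-composing with a $C^\infty$ function of one variable $\phi+\flat(\phi)$ as in the remark preceding Proposition \ref{pmodgerm}, I would normalize it to satisfy $g(0,\wt h)\equiv\wt h$ as in (\ref{gnorma}); that remark makes $g$ uniquely determined by $\mcf$. Proposition \ref{pmodgerm}(1) applied to $g$ then yields a $C^\infty$-smooth $\wt h$-flat function $\psi$ on some cylinder $S^1\times[0,\delta)$ with $\psi(0,\wt h)\equiv0$. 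Uniqueness of its germ at $S^1\times\{0\}$ is immediate from the inversion of (\ref{gth}): substituting $t=\tau/\sqrt{\wt h}$ gives the explicit formula
\[
\psi(t,\wt h)=g\bigl(t\sqrt{\wt h},\wt h\bigr)-\wt h,
\]
and analogously $\psi(t,\phi)=g(t\phi,\phi)-\phi$ for case (\ref{gth2}); both depend only on $g$, hence only on $\mcf$, so shrinking the representative merely shrinks $\delta$.

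For the $\psi$-to-foliation direction, given a germ of $\psi$ I would pick a representative on $S^1\times[0,\delta)$, invoke the converse part of Proposition \ref{pmodgerm}(1) (or (2)) to obtain $g$ on $V=\{0<\wt h<\delta\}$ extending $C^\infty$-smoothly to $J$, and take the foliation by connected components of its level curves. The only ingredient not directly supplied by Proposition \ref{pmodgerm} is the $F$-invariance of $g$, which I would check by direct computation: for $F$ as in (\ref{snform}),
\[
g\circ F(\tau,\wt h)=\wt h+\psi\!\left(\tfrac{\tau+\sqrt{\wt h}}{\sqrt{\wt h}},\wt h\right)=\wt h+\psi\!\left(\tfrac{\tau}{\sqrt{\wt h}}+1,\wt h\right)=g(\tau,\wt h),
\]
using $1$-periodicity of $\psi$ in $t$; the same calculation handles (\ref{tauphiob}) with the ansatz (\ref{gth2}). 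I expect no serious obstacle: the theorem is essentially the germification of Proposition \ref{pmodgerm}, and the only conceptual point is that the ansatz (\ref{gth}) is tailored precisely so that $F$-invariance of $g$ on one side is equivalent to the unit-period condition of $\psi$ on the other.
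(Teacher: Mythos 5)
Your proof is correct and takes essentially the same route as the paper, which deduces Theorem \ref{tmodgerm} directly from Proposition \ref{pmodgerm} together with Theorem \ref{unget} and the normalization remark preceding (\ref{gnorma}). In particular, your inversion formula $\psi(t,\wt h)=g(t\sqrt{\wt h},\wt h)-\wt h$ is exactly the formula (\ref{psiper}) used in the paper's proof of Proposition \ref{pmodgerm}, so nothing essential is added or missing.
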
 

\begin{theorem} \label{tmog} The space of germs  of  foliations  satisfying statements of 
any of Theorems \ref{thm1}, \ref{thm2}, \ref{thm2closed}, 
\ref{thm3}, \ref{thm33} at the corresponding boundary curve $\gamma$ or $J$ is isomorphic 
to the space of $h$-flat germs at $S^1\times\{0\}$ of $C^\infty$-smooth functions $\psi(t,h)$ on $S^1\times\rr_{\geq0}$ 
with $\psi(0,h)=0$. 
\end{theorem}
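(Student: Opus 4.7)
The plan is to reduce each of the five cases to the normal-form setting of Theorem \ref{tmodgerm}, which already supplies the bijection between foliations by invariant curves and $h$-flat germs $\psi$.

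For Theorem \ref{thm3} I would invoke Theorem \ref{addthm3}: after a $C^\infty$-smooth symplectic change of coordinates on $U\cup J$, the map $F$ becomes $(\tau,\wt h)\mapsto(\tau+\sqrt{\wt h},\wt h)$, so Theorem \ref{tmodgerm} directly supplies the desired bijection with germs of $\psi$. For Theorem \ref{thm33} I would likewise use statement 2) of that theorem to move $\wt F$ into the form $(\wt\tau,\wt\phi)\mapsto(\wt\tau+\wt\phi,\wt\phi)$ and then again invoke Theorem \ref{tmodgerm}. In both cases the normalization $g(0,\wt h)\equiv \wt h$ from (\ref{gnorma}) removes the ambiguity of post-composition with a one-variable diffeomorphism, so the bijection is canonical at the level of germs.

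For Theorems \ref{thm1} and \ref{thm2} I would combine the previous reduction with the caustic $\leftrightarrow$ invariant-curve dictionary outlined in Subsection 1.2 and Remark \ref{rkab}. Explicitly, to each (lifted) caustic foliation near $\gamma$ one associates the foliation, on a domain adjacent to $\wt\gamma$ in $T_{\leq1}\gamma$, whose leaves are the families of unit tangent vectors to the caustics projected by $\pi_\perp$; this is a $C^\infty$-smooth foliation by $\delta_+$-invariant curves with $\wt\gamma$ as a leaf. By Example \ref{exdel} the map $\delta_+$ is $C^\infty$-lifted strongly billiard-like, so the preceding paragraph applies. The converse direction of Remark \ref{rkab}, whose proof is deferred to Subsection 2.7, asserts that this correspondence is a $C^\infty$-smooth bijection on germs, hence combining the two yields the desired isomorphism for Theorems \ref{thm1} and \ref{thm2}. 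For Theorem \ref{thm2closed} I would first lift to the universal cover $\wt\mca$: the lifted billiard map is still $C^\infty$-lifted strongly billiard-like on a neighborhood of $J=\rr\times\{0\}$, and one repeats the argument. The $S^1$-structure on the $\psi$-side arises from the $F$-dynamics $t=\tau/\sqrt{\wt h}\mapsto t+1$ rather than from any periodicity of $J$, so noncompactness of the universal cover is not an obstruction.

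The main obstacle is therefore not the assembly itself but the verification of $C^\infty$-smoothness and bijectivity of the caustic $\leftrightarrow$ invariant-curve correspondence for possibly immersed and/or closed curves. For the embedded case this is precisely the final claim of Remark \ref{rkab}; the immersed case of Theorem \ref{thm2} is local on $J$ and hence reduces, via the immersion $\Psi$ and shrinking of $V$, to the embedded statement applied along small arcs. Once this smoothness and bijectivity is in hand (granted from Subsection 2.7), Theorem \ref{tmog} is a formal consequence of Theorems \ref{addthm3}, \ref{thm33}(2), and \ref{tmodgerm}.
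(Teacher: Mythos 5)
Your proposal is correct and follows essentially the same route as the paper, which disposes of Theorem \ref{tmog} in one sentence ("Theorem \ref{tmodgerm} \dots immediately implies Theorem \ref{tmog}"), relying implicitly on exactly the reductions you spell out: Theorem \ref{addthm3} and Theorem \ref{thm33}(2) to reach the normal forms covered by Theorem \ref{tmodgerm}, and the caustic--invariant-curve correspondence of Subsection 2.7 to handle Theorems \ref{thm1}, \ref{thm2}, \ref{thm2closed}. Your version is simply a more explicit account of the same argument.
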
 

 Theorems \ref{unget},  \ref{tmodgerm}, \ref{tmog} and  Proposition \ref{pmodgerm} will be proved in Subsection 2.8.
 In Subsection 2.9 we deduce non-uniqueness statements of  Theorems \ref{thm1}, \ref{thm2}, \ref{thm2closed}, \ref{thm3}, \ref{thm33} 
from Theorem \ref{tmog} and the following proposition, which will be also proved there.
  
\begin{proposition} \label{distgerm} Let $J=(a,b)\times\{0\}$, 
$W\subset\rr_\tau\times(\rr_+)_\phi$ be a domain adjacent to $J$. Let $\wt F:W\cup J\to\rr\times\rr_{\geq0}$ 
be a map, as in 
(\ref{tauphi}).  Any two $\wt F$-invariant foliations (functions, line fields) on $W$ having distinct 
germs at $J$ have distinct germs at each point in $J$. 
The same statement holds for similar objects invariant under 
a $C^{\infty}$-lifted strongly billiard-like map.
\end{proposition}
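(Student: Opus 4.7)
I would prove the contrapositive: if two $\wt F$-invariant objects (foliations, functions, or line fields) agree as germs at a single point $p_0=(\tau_0,0)\in J$, then they agree as germs on a full neighborhood of $J$ in $W\cup J$. The engine is that near $J$ the map $\wt F$ of type (\ref{tauphi}) acts essentially as the translation $\tau\mapsto\tau+\phi$ modulo $\phi$-flat corrections, so the forward/backward $\wt F$-orbit of a small rectangle over $p_0$ passes through a horizontal neighborhood of every other point of $J$.

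After shrinking, I may assume the two objects coincide on a rectangle $R=\{(\tau,\phi):|\tau-\tau_0|<\eta,\ 0\le\phi<\delta\}\subset W\cup J$, so the closed set $E\subset W\cup J$ on which they coincide contains $R\cup J$ (on $J$ both objects reduce to the same boundary data). By invariance of both objects, $E$ is stable under $\wt F$ and $\wt F^{-1}$ wherever these map back into $W\cup J$. Fix a compact subinterval $[a',b']\subset(a,b)$ containing $\tau_0$ and the target $\tau_1$; since $W$ is open and adjacent to $J$, there is $\epsilon_0>0$ with $[a',b']\times(0,\epsilon_0)\subset W$. For any $(\tau^*,\phi^*)$ with $\tau^*$ close to $\tau_1$ and $\phi^*\in(0,\min(\epsilon_0,\eta,\delta))$ so small that $|\flat(\phi^*)|\ll\phi^*$, choose the integer $n\approx(\tau^*-\tau_0)/\phi^*$ and solve $(\tau^{**},\phi^{**})=\wt F^{-n}(\tau^*,\phi^*)$. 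Flatness of $\flat$ gives $\phi^{**}=\phi^*+O(n\,\flat(\phi^*))\approx\phi^*<\delta$ and $|\tau^{**}-\tau_0|\le\phi^*+O(n\,\flat(\phi^*))<\eta$, so $(\tau^{**},\phi^{**})\in R$; the entire orbit connecting them lies in $[a',b']\times(0,\epsilon_0)\subset W$. Hence $(\tau^*,\phi^*)\in E$, and varying $(\tau^*,\phi^*)$ shows $E$ contains an open neighborhood of $\{(\tau_1,0)\}$, hence of all of $J$.

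The three cases (foliations, functions, line fields) reduce to this single propagation argument: for line fields one takes $E=\{\ell_1=\ell_2\}\cup J$; for foliations given by invariant functions $h_1,h_2$ defining the same foliation near $p_0$ one has $h_2=\rho\circ h_1$ on $R$ for a smooth diffeomorphism $\rho$, and this identity propagates along $\wt F$-orbits by invariance of each $h_i$; for two invariant functions with coinciding germs the propagation of $h_1=h_2$ is direct. For a $C^\infty$-lifted strongly billiard-like map $F$, passing to its lifting $\wt F$ via $z=\sqrt y$ (and, if desired, normalizing via Theorem \ref{addthm3} to (\ref{snform})) reduces the statement to the case just treated, since $F$-invariant objects correspond bijectively to $\wt F$-invariant ones with matching germs at the boundary. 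The main technical obstacle is the quantitative control in the previous paragraph: simultaneously ensuring that the backward iterate $(\tau^{**},\phi^{**})$ lands in $R$ and that the entire orbit stays in $W$. Both are handled by flatness of $\flat$, which makes $n\,\flat(\phi^*)=o((\phi^*)^k)$ for every $k$ despite $n\sim 1/\phi^*$, so the errors accumulated over $n$ iterations remain negligible compared to $\phi^*$ itself.
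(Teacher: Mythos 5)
Your proposal is correct and is essentially the paper's argument run in the contrapositive direction: both rest on the fact that a two-sided $\wt F$-orbit at height $\phi$ sweeps across $J$ as a near-arithmetic progression in $\tau$ with step $\approx\phi$, so local (dis)agreement of invariant objects at one boundary point propagates to every point of $J$. The paper propagates a sequence of points of disagreement after first conjugating to the exact normal form $(\tau,\phi)\mapsto(\tau+\phi,\phi)$ via Theorem \ref{thm33}, whereas you propagate the coincidence set and control the flat errors directly; the orbit estimates you flag as the main technical point are exactly those of Proposition \ref{propit}.
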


%begin-new
\subsection{Corollaries on conjugacy  of open billiard maps  near the boundary}

The results stated below and proved in Subsection 2.10 concern (symplectic) conjugacy of billiard maps 
near the boundary. 

Here we deal with a strictly convex oriented $C^\infty$-smooth curve $\gamma\subset\rr^2$ that is not closed: 
parametrized by an interval. 
We consider that it is positively oriented as  the boundary of its convex side.  Let 
us first consider that  $\gamma$ 
goes to infinity in both directions and bounds a convex open billiard.  
By $\wt\gamma$ we denote the family of its orienting unit tangent vectors; $\wt\gamma$ lies in the space $\Gamma$, 
which is the unit tangent bundle of the ambient plane restricted to $\gamma$. 
Let $s$ be a natural length parameter of the curve $\gamma$. 
 Let $\Gamma^0_+\subset\Gamma$ be the open subset adjacent to $\wt\gamma$ defined in Subsection 1.2. It  
 lies in the space of 
pairs $(s,v)$ where $s\in\gamma$ and $v\in T_s\rr^2$ is a unit 
vector directed to the convex side from the curve $\gamma$. Recall that $\phi=\phi(v)$ denote the angle 
between the vector $v$ and the unit tangent vector $\dot\gamma(s)$. Let $(a,b)=(a_\gamma,b_\gamma)\subset\rr$ denote the 
length parameter interval parametrizing $\gamma$.   In the coordinates $(s,\phi)$ the curve 
$\wt\gamma$ is the interval $J=J_\gamma=(a,b)\times\{0\}$, and $\Gamma^0_+$ is 
a domain in $\rr\times\rr_+$ adjacent to $J$.  
Set $y=1-\cos\phi$, see (\ref{defy}).  Recall that the billiard map $\mathcal T_\gamma$
acting by reflection from $\gamma$ of the above unit vectors is a $C^\infty$-smooth 
diffeomorphism defined on $\Gamma^0_+\cup\wt\gamma$. 
In the coordinates $(s,y)$ it is a symplectic map: a $C^\infty$-lifted strongly billiard-like map defined on 
$\mathcal V\cup J$, where $\mathcal V\subset \rr_s\times(\rr_+)_y$ is a domain adjacent to $J$. 

The above statements remain valid in the case, when the curve $\gamma$ 
 in question is a  
subarc (parametrized by interval but not necessarily infinite) of a strictly convex $C^\infty$-smooth curve; $\gamma$ also may be an immersed curve. 

\begin{definition} Let $\gamma_1,\gamma_2\subset\rr^2$ be strictly convex $C^\infty$-smooth  planar curves parametrized by intervals 
(they are allowed to be immersed), positively oriented as boundaries of their convex sides. Let $J_{\gamma_i}\subset\rr\times\{0\}$, $i=1,2$, be the corresponding intervals 
defined above. We say that the billiard maps $\mathcal T_{\gamma_i}$ are {\it $C^\infty$-smoothly conjugated 
near the boundary in the $(s,\phi)$- ($(s,y)$-) coordinates} if there exist domains $U_i$ in $\rr_s\times(\rr_+)_\phi$ 
(respectively, in $\rr_s\times(\rr_+)_y$) adjacent to $J_{\gamma_i}$ and a $C^\infty$-smooth diffeomorphism 
$H:U_1\cup J_{\gamma_1}\to U_2\cup J_{\gamma_2}$ conjugating the billiard maps,  
$H\circ\mct_{\gamma_1}\circ H^{-1}=\mct_{\gamma_2}$. In the case, when the billiard maps are conjugated in the 
$(s,y)$-coordinates, and the conjugating diffeomorphism $H$ is a symplectomorphism, we say that they are 
$C^\infty$-smoothly {\it  symplectically conjugated near the boundary.}
\end{definition}

\begin{remark} \label{rsysf} Smooth conjugacy of billiard maps near the boundary in the coordinates $(s,y)$ 
implies their smooth conjucacy in the coordinates $(s,\phi)$. This follows from the fact that 
for every two intervals $J_1,J_2\in\rr_s\times\{0\}$ and every two domains $U_1,U_2\subset\rr_s\times(\rr_+)_y$ 
adjacent to $J_1$ and $J_2$ respectively every diffeomorphism $H:U_1\cup J_1\to U_2\cup J_2$ 
lifts to a diffeomorphism of the corresponding domains in the $(s,\phi)$-coordinates (taken together with 
adjacent intervals $J_i$). The latter statement follows from \cite[lemma 3.1]{gpor} applied to the second 
component of the diffeomorphism $H$.
\end{remark}

The results stated below on conjugacy of billiard maps near the boundary 
are corollaries of Theorems \ref{addthm3} and 
\ref{thm33} on normal forms of $C^\infty$-lifted strongly  billiard-like maps and their liftings.

\begin{theorem} \label{thconj1} Let $\gamma_1$, $\gamma_2$ be strictly convex $C^\infty$-smooth one-dimensional submanifolds in $\rr^2$ parametrized by intervals (thus, going to infinity in both 
directions) and positively oriented as boundaries of their convex sides. Let in addition, 
the curves $\gamma_i$ have finite asymptotic tangent lines at infinity: as $x\in\gamma_i$ tends to infinity 
(in each direction), the tangent line $T_x\gamma_i$ 
converges to a finite line. Then the corresponding billiard maps are $C^\infty$-smoothly conjugated near 
the boundary in $(s,y)$- (and hence, in $(s,\phi$)-) coordinates.
 \end{theorem}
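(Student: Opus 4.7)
The plan is to reduce each billiard map to the common normal form supplied by Theorem \ref{addthm3} and then match the two models by an affine diffeomorphism, after verifying that the asymptotic tangent line hypothesis forces the resulting parameter intervals to be bounded. Since both curves are parametrized by intervals and go to infinity in both directions, the arc-length intervals are $J_{\gamma_i}=\rr\times\{0\}$. For $i=1,2$ the map $\mct_{\gamma_i}$ is $C^\infty$-lifted strongly billiard-like in the $(s,y)$-coordinates by Example \ref{exdel}, so Theorem \ref{addthm3} supplies a $C^\infty$-smooth symplectomorphism $\Phi_i:U_i\cup J_{\gamma_i}\to U_i^*\cup J_i^*$ with $J_i^*=(a_i^*,b_i^*)\times\{0\}$ an interval of the horizontal axis, conjugating $\mct_{\gamma_i}$ to the model $F_0:(\tau,z)\mapsto(\tau+\sqrt z,z)$.

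To match the two models I use the affine map $A(\tau,z):=(\alpha(\tau-a_1^*)+a_2^*,\alpha^2 z)$ with $\alpha:=(b_2^*-a_2^*)/(b_1^*-a_1^*)$; a direct check shows $A\circ F_0=F_0\circ A$. This candidate is a $C^\infty$-smooth diffeomorphism precisely when $\alpha$ is a finite positive real, i.e. when both $J_i^*$ are bounded intervals. The crux of the proof is therefore to establish $|J_i^*|<\infty$. Combining the asymptotic formula (\ref{deltsy}) for $\mct_{\gamma_i}$ with the symplectic relation $d\tau\wedge d\wt h=ds\wedge dy$, the leading-order balance in $\sqrt y$ forces $\wt h(s,y)=2\kappa_i^{-2/3}(s)\,y+O(y^{3/2})$ and $\tau_s(s,0)=\tfrac12\kappa_i^{2/3}(s)$, so that $|J_i^*|=\tfrac12\int_{-\infty}^{\infty}\kappa_i^{2/3}(s)\,ds$ is one half of the classical Lazutkin length of $\gamma_i$.

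It remains to show $\int\kappa_i^{2/3}\,ds<\infty$ at each infinite end of $\gamma_i$. Fix such an end and choose affine coordinates so the asymptotic tangent line is $\{y=0\}$ and $\gamma_i$ is locally $\{y=f(x)\}$ with $f>0$, $f''\geq 0$, $f(x)\to 0$. Then $f'\leq 0$ is increasing to $0$, and the monotonicity of $f'$ gives $xf'(x)\leq f(2x)-f(x)\to 0$, so $xf'(x)\to 0$. Integration by parts then yields
\[
\int_{x_0}^{\infty} uf''(u)\,du = f(x_0)-x_0 f'(x_0)<\infty.
\]
H\"older's inequality with exponents $3/2$ and $3$ gives
\[
\int_{x_0}^{\infty}\bigl(f''(u)\bigr)^{2/3}du\leq\Bigl(\int_{x_0}^{\infty}uf''(u)\,du\Bigr)^{2/3}\Bigl(\int_{x_0}^{\infty}u^{-2}\,du\Bigr)^{1/3}<\infty.
\]
Since $\kappa_i\sim f''$ and $ds\sim dx$ as $x\to\infty$ (because $f'\to 0$), this gives $\int\kappa_i^{2/3}\,ds<\infty$ at that end; the symmetric argument handles the other end, so $|J_i^*|<\infty$ for $i=1,2$.

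Finally, after possibly shrinking $U_1^*$ (and pulling the shrinkage back through $\Phi_1$ to $U_1$) so that $A$ restricts to a diffeomorphism onto a subdomain of $U_2^*\cup J_2^*$, the composition $H:=\Phi_2^{-1}\circ A\circ\Phi_1$ is the desired $C^\infty$-smooth diffeomorphism conjugating $\mct_{\gamma_1}$ to $\mct_{\gamma_2}$ near the boundary in the $(s,y)$-coordinates; the $(s,\phi)$-statement then follows from Remark \ref{rsysf}. The principal obstacle is the Lazutkin-integral bound $\int\kappa_i^{2/3}\,ds<\infty$, which is exactly where the finite asymptotic tangent line hypothesis enters.
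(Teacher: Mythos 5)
Your proposal follows essentially the same route as the paper: reduce each billiard map to the normal form $(\tau,z)\mapsto(\tau+\sqrt z,z)$ via Theorem \ref{addthm3}, identify the length of the resulting boundary interval with half the Lazutkin length $\tfrac12\int\kappa_i^{2/3}\,ds$ (the paper isolates this as Proposition \ref{proform} and Theorem \ref{thconj3}), extract finiteness of that integral from the asymptotic-tangent-line hypothesis by the same H\"older-plus-integration-by-parts argument (the paper's Lemma \ref{lazconv}), and glue the two models by an affine map commuting with the normal form. Your leading-order computation of $\tau_s(s,0)=\tfrac12\kappa_i^{2/3}(s)$ is a correct shortcut for the specific conjugacy produced by Theorem \ref{addthm3}, which is all the existence statement needs.

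There is one local slip, in the proof that $xf'(x)\to0$. With $f''\geq0$ and $f'\leq0$, monotonicity of $f'$ gives $f(2x)-f(x)=\int_x^{2x}f'(u)\,du\geq xf'(x)$, which is the inequality you wrote; but this bounds the nonpositive quantity $xf'(x)$ from \emph{above} by a quantity tending to $0$, which is vacuous since $xf'(x)\leq0$ already. You need the matching bound from below: using $f'(u)\leq f'(2x)$ on $[x,2x]$ one gets $0\geq xf'(2x)\geq f(2x)-f(x)\to0$, hence $xf'(2x)\to0$ and so $tf'(t)\to0$. (Alternatively, as in the paper, one can avoid this limit altogether: if $\int uf''(u)\,du$ diverged, integration by parts together with $f(x)\to0$ would force $xf'(x)\to+\infty$, contradicting $f'\leq0$.) With this one-line repair the argument is complete.
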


 \begin{theorem} \label{thconj2} The statement of Theorem \ref{thconj1} on conjugacy of 
 billiard maps corresponding to $C^\infty$-smooth strictly convex curves $\gamma_i$ 
  remains valid in the case, when each $\gamma_j$ is either a submanifold going to infinity in both directions, 
 or a (may be immersed)  subarc of an (immersed) $C^\infty$-smooth curve, and the two following 
 statements hold:
 
 1) as the length parameter $s$ of the curve $\gamma_j$ goes to an endpoint of the length parameter interval, 
 the corresponding point of the curve $\gamma_j$ tends either to a finite limit (endpoint of $\gamma_j$) 
 where $\gamma_j$ is $C^2$-smooth, or to infinity;
 
 2) in the latter case, when the limit is infinite, the tangent line $T_s\gamma_j$ has a finite limit: 
 a finite asymptotic tangent line.  
  \end{theorem}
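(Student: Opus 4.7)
My plan is to adapt the proof strategy available for Theorem~\ref{thconj1}, applying the normal-form Theorem~\ref{addthm3} to each billiard map $\mct_{\gamma_i}$ separately and then gluing the two normal forms via an affine rescaling. Each billiard map, expressed in the $(s,y)$-coordinates, is a $C^{\infty}$-lifted strongly billiard-like map (Example~\ref{exdel}). By Theorem~\ref{addthm3}, after a suitable post-composition of the invariant first integral, one obtains a $C^{\infty}$-smooth symplectic coordinate diffeomorphism $\Phi_i:U_i\cup J_{\gamma_i}\to U_i^{\mathrm{nf}}\cup J_i^{\mathrm{nf}}$ in which $\mct_{\gamma_i}$ becomes the standard model $F^{\mathrm{nf}}(t,z)=(t+\sqrt z,z)$, where $J_i^{\mathrm{nf}}\subset\rr\times\{0\}$ is an open interval and $U_i^{\mathrm{nf}}$ is a domain adjacent to it.

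The crux is to show $J_i^{\mathrm{nf}}$ is \emph{bounded}. Matching leading orders in the asymptotics $\mct_{\gamma_i}(s,y)=(s+2\sqrt 2\,\kappa_i^{-1}(s)\sqrt y+O(y),\,y+O(y^{3/2}))$ from Proposition~\ref{psmi} with the normal form, and using the symplectic identity $\tau_{i,s}\wt h_{i,y}=1$ on $\{y=0\}$, one finds $\wt h_i(s,y)=w_i(s)^{2/3}y+O(y^{3/2})$ (the $s$-dependence of the leading coefficient is forced by $F$-invariance together with the symplectic constraint $b_1=-\tfrac23 w'$), and hence $\tau_{i,s}(s,0)=w_i(s)^{-2/3}=\tfrac12\kappa_i(s)^{2/3}$. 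Therefore
\[
|J_i^{\mathrm{nf}}|=\tfrac12\int_{\gamma_i}\kappa_i(s)^{2/3}\,ds.
\]
At a finite $C^2$-endpoint of $\gamma_i$, the curvature stays bounded and the integral converges. At an infinite endpoint with asymptotic tangent line, choose a local affine frame making that tangent line horizontal so that $\gamma_i$ is a graph $y=y_i(x)$, with $\kappa_i\sim|y_i''|$ and $ds\sim dx$. Existence of the finite asymptotic tangent line translates, via the tangent-line intercept $y_i(x)-x\,y_i'(x)\to y_i(\infty)$, into the finiteness of $\int^{\infty}x\,y_i''(x)\,dx$ (an integration by parts). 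H\"older's inequality with exponents $3/2$ and $3$ then gives
\[
\int^{\infty}(y_i'')^{2/3}\,dx\;=\;\int^{\infty}(x\,y_i'')^{2/3}\cdot x^{-2/3}\,dx\;\le\;\Bigl(\int^{\infty}x\,y_i''\,dx\Bigr)^{2/3}\Bigl(\int^{\infty}x^{-2}\,dx\Bigr)^{1/3}<\infty,
\]
so $|J_i^{\mathrm{nf}}|<\infty$ in this case as well.

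Once $J_1^{\mathrm{nf}}$ and $J_2^{\mathrm{nf}}$ are both bounded open intervals, pick $\alpha>0$ and $c\in\rr$ so that the affine map $t\mapsto\alpha t+c$ carries $J_1^{\mathrm{nf}}$ onto $J_2^{\mathrm{nf}}$, and consider $X(t,z):=(\alpha t+c,\alpha^2 z)$. A direct check
\[
X\circ F^{\mathrm{nf}}(t,z)=(\alpha t+c+\alpha\sqrt z,\alpha^2 z)=(\alpha t+c+\sqrt{\alpha^2 z},\alpha^2 z)=F^{\mathrm{nf}}\circ X(t,z)
\]
shows that $X$ is a self-conjugacy of $F^{\mathrm{nf}}$. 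After possibly shrinking $U_1$ so that $X(\Phi_1(U_1))\subset U_2^{\mathrm{nf}}$, the composition $H:=\Phi_2^{-1}\circ X\circ\Phi_1$ is a $C^{\infty}$-smooth diffeomorphism of a neighborhood of $J_{\gamma_1}$ onto a neighborhood of $J_{\gamma_2}$ that conjugates $\mct_{\gamma_1}$ to $\mct_{\gamma_2}$ in the $(s,y)$-coordinates. Conjugacy in the $(s,\phi)$-coordinates then follows from Remark~\ref{rsysf}.

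The principal obstacle is the boundedness of $J_i^{\mathrm{nf}}$, i.e.\ the finiteness of $\int\kappa_i^{2/3}\,ds$. It is precisely the hypothesis of a \emph{finite asymptotic tangent line} (and not merely asymptotic tangent direction) that supplies the finite first moment $\int x\,y_i''\,dx$ required to run the H\"older bound; a weaker hypothesis could produce an unbounded $J_i^{\mathrm{nf}}$. The rigidity of the normal form—an easy computation shows that the only $C^{\infty}$-smooth self-conjugacies of $F^{\mathrm{nf}}$ restrict to affine maps on the boundary line—is what forces both intervals to lie in the same (bounded) type, and this is exactly what the hypothesis guarantees uniformly for either endpoint behaviour allowed in the statement.
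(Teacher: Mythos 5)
Your proposal is correct and follows essentially the same route as the paper: the paper proves Theorem \ref{thconj2} by the same argument as Theorem \ref{thconj1}, namely reducing to the Lazutkin-length criterion of Theorem \ref{thconj3} (itself built on the normal form of Theorem \ref{addthm3}, Proposition \ref{proform}, and the affine self-conjugacy $(t,z)\mapsto(\alpha t+c,\alpha^2z)$), with finiteness of $\int\kappa^{2/3}ds$ at infinite ends supplied by exactly your H\"older/integration-by-parts argument (Lemma \ref{lazconv}) and at finite $C^2$-ends by boundedness of the curvature. You have merely unpacked the cited machinery rather than invoking it as a black box.
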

  
  \begin{remark} V. Kaloshin and C.E. Koudjinan \cite{kalkoudj} proved continuous 
  conjugacy near the boundary of  two  billiard maps corresponding to two arbitrary ellipses. 
  For any two ellipses with two appropriate points deleted in each of them they have also proved smooth conjugacy 
  of the corresponding billiard maps on open domains adjacent to the corresponding boundary intervals $J$  
  in the $(s,\phi)$-coordinates.  
  \end{remark}
   
 Below we state a more general result and provide a sufficient condition of symplectic conjugacy of 
 billiard maps in the coordinates $(s,y)$. 
 To this end, let us recall the following definition. 
 
\begin{definition} Let $\gamma$ be a $C^2$-smooth oriented planar curve, and let $s$ be its length parameter 
defining its orientation. Let $I_\gamma=(a_\gamma,b_\gamma)\subset\rr_s$ 
denote the length parameter interval parametrizing $\gamma$. 
Let $\kappa=\kappa(s)$ denote the geodesic curvature of the curve $\gamma$ as a function of $s$. 
The {\it Lazutkin length} of the curve $\gamma$ is the integral 
\begin{equation}\mcl(\gamma):=\int_{a_\gamma}^{b_\gamma}\kappa^{\frac23}(s)ds,\label{lazl}\end{equation}
see \cite[formula (1.3)]{laz}. 
(While the length parameter interval is  defined up to translation, the integral $\mcl(\gamma)$ is 
uniquely defined.)
\end{definition}

\begin{theorem} \label{thconj3} Let $\gamma_1$, $\gamma_2$ be two strictly convex $C^\infty$-smooth 
(may be immersed) planar curves, parametrized by intervals and positively oriented as local boundaries of their convex sides. The 
corresponding billiard maps are $C^\infty$-smoothly conjugated near the boundary 
in $(s,y)$-coordinates,  if and only if one of the two following conditions holds: 

i) either both Lazutkin lengths $\mcl(\gamma_i)$ are finite;

ii) or  both Lazutkin lengths $\mcl(\gamma_i)$ are infinite and the improper integrals defined them 
are 

- either both infinite in both directions;

- or both infinite in one and the same direction (with respect to the orientations of the curves $\gamma_i$) 
and both finite in the other direction.

The same criterium also holds for $C^\infty$-smooth conjugacy near the boundary in $(s,\phi)$-coordinates.
\end{theorem}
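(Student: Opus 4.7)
The plan is to reduce both billiard maps to the universal symplectic normal form provided by Theorem \ref{addthm3} and to classify such normal forms up to $C^\infty$-smooth conjugacy, showing that the only surviving invariant is the topological ``type'' (bounded, the whole real line, or semi-infinite on a given side) of the normal-form boundary interval, which is in turn controlled by the Lazutkin integral $\int\kappa^{2/3}ds$ of $\gamma_i$. Since $\mct_{\gamma_i}$ is $C^{\infty}$-lifted strongly billiard-like (Example \ref{exdel}), Theorem \ref{addthm3} furnishes a $C^{\infty}$-smooth symplectomorphism $\Phi_i:U_i\cup J_{\gamma_i}\to U_i^*\cup\wt J_i$ onto a domain $U_i^*$ adjacent to an interval $\wt J_i=I_i\times\{0\}\subset\rr\times\{0\}$, conjugating $\mct_{\gamma_i}$ to $F(\tau,\wt h)=(\tau+\sqrt{\wt h},\wt h)$. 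Thus the conjugacy problem between $\mct_{\gamma_1}$ and $\mct_{\gamma_2}$ becomes that of the single normal form $F$ on two domains, with datum the intervals $I_i$.

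The key step is to identify $I_i$ up to positive affine transformation with the Lazutkin structure. Writing the symplectic change of coordinates as $(\tau,\wt h)=(T(s,y),Z(s,y))$ with $Z|_{y=0}\equiv0$ and expanding $T=T_0(s)+O(y)$, $Z=Z_1(s)y+O(y^2)$, the symplectic condition $dT\wedge dZ=ds\wedge dy$ gives $T_0'(s)Z_1(s)=1$, while matching the leading $\sqrt y$-term $w(s)\sqrt y$ of $F$ with $\sqrt{\wt h}$ gives $T_0'(s)^2w(s)^2=Z_1(s)$. Eliminating $Z_1$ yields $T_0'(s)=w(s)^{-2/3}$, and since Proposition \ref{psmi} gives $w(s)=2\sqrt 2\,\kappa^{-1}(s)$, we obtain $T_0'(s)\propto\kappa(s)^{2/3}$. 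Hence $T_0$ is, up to a universal multiplicative constant, a primitive of $\kappa^{2/3}$: each endpoint of $I_i$ lies at finite distance from any interior point if and only if the corresponding improper Lazutkin integral converges at that endpoint. The remaining freedom in Theorem \ref{addthm3} (post-composition of $\wt h$ by a smooth function of one variable) only amounts to positive-affine rescalings of $(\tau,\wt h)$, so it does not alter the topological type of $I_i$.

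For sufficiency, under (i) or (ii) the intervals $I_1,I_2$ have the same topological type with matching orientations. Pick an orientation-preserving affine map $\ell(\tau)=c_1\tau+c_0$ ($c_1>0$) carrying a sub-interval of $I_1$ adjacent to the relevant end-structure onto one of $I_2$, and set $\Psi(\tau,\wt h)=(\ell(\tau),c_1^2\wt h)$. A direct check gives $\Psi\circ F=F\circ\Psi$, so $\Phi_2^{-1}\circ\Psi\circ\Phi_1$ is the desired $C^\infty$-smooth conjugacy near the boundary in $(s,y)$-coordinates. Conversely, any $C^\infty$-smooth conjugacy $H$ pulls back through the $\Phi_i$ to a diffeomorphism $\wt H:U_1^*\cup\wt J_1\to U_2^*\cup\wt J_2$ with $\wt H\circ F=F\circ\wt H$; expanding this equation to leading order as in the previous paragraph forces $\wt H|_{\wt J_1}$ to be an orientation-preserving affine map onto $\wt J_2$, so $I_1$ and $I_2$ have the same topological type and we fall into case (i) or (ii). The case of $(s,\phi)$-coordinates follows in parallel using the non-symplectic normal form of Theorem \ref{thm33} in place of Theorem \ref{addthm3}, together with Remark \ref{rsysf} for one of the implications.

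The main obstacle is the second paragraph: verifying that the topological type of $I_i$ (in particular the finite/infinite nature of each of its endpoints) is genuinely controlled by, and controls, the convergence behaviour of the improper Lazutkin integral at each endpoint of $\gamma_i$. This requires both carrying out the leading-order analysis yielding $T_0'(s)\propto\kappa(s)^{2/3}$ and confirming that the post-composition ambiguity in Theorem \ref{addthm3} cannot alter the type of $I_i$. A related subtlety in the necessity direction is showing that the constraint $\wt H\circ F=F\circ\wt H$ near $\wt h=0$ really forces affinity of $\wt H|_{\wt J_1}$ and not merely some more general diffeomorphism; this follows from the periodicity-in-$\tau$ of the second component of $\wt H$ with period $\sqrt{\wt h}\to0$ combined with $C^\infty$-smoothness up to $\{\wt h=0\}$.
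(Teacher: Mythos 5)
Your proposal is correct and follows essentially the same route as the paper: reduce both maps to the normal form $(\tau,\wt h)\mapsto(\tau+\sqrt{\wt h},\wt h)$ via Theorem \ref{addthm3}, identify the normal-form boundary interval with the Lazutkin parameter interval (the relation $T_0'(s)=w(s)^{-2/3}\propto\kappa^{2/3}(s)$ is exactly the paper's Proposition \ref{proform}), and observe that any self-conjugacy of the normal form must restrict to an affine map of the boundary, so that conjugacy is possible precisely when the two intervals have the same affine type. The only divergence is in proving that boundary restriction is affine --- you use the $\sqrt{\wt h}$-periodicity in $\tau$ of the second component together with smoothness up to the boundary, while the paper compares the asymptotic arithmetic progressions of orbits at two base points --- but these are equivalent formulations of the same rigidity.
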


\begin{theorem} \label{thconj4} Let in the conditions of Theorem \ref{thconj3},  some of 
conditions i) or ii) hold.   Then the billiard maps  are $C^\infty$-smoothly {\bf symplectically} conjugated near 
the boundary, 
if and only if  the Lazutkin lengths of the curves $\gamma_j$  are either both finite and equal, 
 or both infinite and the above condition ii) holds. 
\end{theorem}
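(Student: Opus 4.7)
The plan is to reduce to the normal form $N(\tau,\tilde h)=(\tau+\sqrt{\tilde h},\tilde h)$ provided by Theorem \ref{addthm3}, and then identify the Lazutkin length with a $C^\infty$-symplectic invariant of $N$ at the boundary.

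\textbf{Step 1 (Normal form and Lazutkin length).} For each $i=1,2$, Theorem \ref{addthm3} yields a $C^\infty$-smooth symplectomorphism $\Phi_i:(s,y)\mapsto(\tau_i,\tilde h_i)$, defined on a domain $V_i\cup J_{\gamma_i}$, that conjugates $\mathcal T_{\gamma_i}$ to $N$ on a domain $U_i\cup J_i^\ast$ with $J_i^\ast\subset\rr\times\{0\}$ an interval. First I compute the length of $J_i^\ast$ in terms of $\gamma_i$: writing $\tilde h(s,y)=y\,h_1(s)+O(y^2)$, matching the leading asymptotics \eqref{deltsy} of the billiard map (with $w_i(s)=2\sqrt 2\,\kappa_i^{-1}(s)$) against the normal form $(\tau,\tilde h)\mapsto(\tau+\sqrt{\tilde h},\tilde h)$, and imposing the symplecticity $d\tau\wedge d\tilde h=ds\wedge dy$, forces $\tau_i'(s,0)^3\,w_i(s)^2=1$ and hence $\partial_s\tau_i(s,0)=\kappa_i^{2/3}(s)/2$. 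Therefore $|J_i^\ast|=\mathcal L(\gamma_i)/2$, and the topological type of $J_i^\ast$ (bounded, half-infinite on the left/right, or bi-infinite) records precisely whether the Lazutkin integral is finite or diverges at each end of $\gamma_i$.

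\textbf{Step 2 (Symplectic self-conjugacies of $N$ restrict to translations on the boundary).} Let $G=(A,B)$ be a $C^\infty$-smooth symplectomorphism commuting with $N$ in a neighborhood of the boundary, smooth up to $\tilde h=0$, with $B|_{\tilde h=0}\equiv 0$. The commutation identity $B(\tau+\sqrt{\tilde h},\tilde h)=B(\tau,\tilde h)$ says $B(\cdot,\tilde h)$ is periodic with period $\sqrt{\tilde h}\to 0$; by a Fourier argument in $\tau$ (equivalently, by the jet rigidity of $N$-invariant smooth functions underlying Theorem \ref{unget} and Proposition \ref{pmodgerm}) this forces $B(\tau,\tilde h)=\tilde h\,b(\tilde h)+\flat$ with a smooth $b$ and an $\tilde h$-flat remainder. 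The analogous analysis of $A(\tau+\sqrt{\tilde h},\tilde h)=A(\tau,\tilde h)+\sqrt{B(\tau,\tilde h)}$, using the smoothness of $\sqrt B=\sqrt{\tilde h}\sqrt{b(\tilde h)}+\flat$, yields $A(\tau,\tilde h)=\sqrt{b(\tilde h)}\,\tau+a(\tilde h)+\flat$. Evaluating the symplecticity condition $A_\tau B_{\tilde h}-A_{\tilde h}B_\tau=1$ at $\tilde h=0$ reduces to $\sqrt{b(0)}\cdot b(0)=1$, so $b(0)=1$. Consequently $G|_{J_1^\ast}$ is the pure translation $\tau\mapsto\tau+a(0)$, which preserves both the length and the topological type of the interval.

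\textbf{Step 3 (Conclusion).} \emph{Necessity.} A $C^\infty$-smooth symplectic conjugacy between $\mathcal T_{\gamma_1}$ and $\mathcal T_{\gamma_2}$ near the boundary pulls back via $\Phi_1$ and $\Phi_2^{-1}$ to a $C^\infty$-smooth symplectic self-conjugacy of $N$ carrying $J_1^\ast$ to $J_2^\ast$; by Step 2 the two intervals are translates in $\rr$, which via Step 1 is exactly the condition that the Lazutkin integrals have matching topological type (condition i) or ii) of Theorem \ref{thconj3}) and are equal when both finite. \emph{Sufficiency.} Given that i) holds with $\mathcal L(\gamma_1)=\mathcal L(\gamma_2)$, or that ii) holds, pick a translation $T(\tau)=\tau+c$ with $T(J_1^\ast)=J_2^\ast$; its trivial symplectic lift $\tilde T(\tau,\tilde h)=(\tau+c,\tilde h)$ commutes with $N$ and is everywhere smooth, and $\Phi_2^{-1}\circ\tilde T\circ\Phi_1$ is the desired symplectic conjugacy. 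The main obstacle is the rigorous justification of the flatness assertion for $B$ and $A$ in Step 2: passing from the commutation with $N$ to $\tilde h$-flatness of the oscillatory parts, as the period $\sqrt{\tilde h}$ collapses at the boundary, will require either a careful Fourier analysis in the shrinking period or a direct appeal to the rigidity statement of Theorem \ref{unget} combined with the description of $N$-invariant functions in Proposition \ref{pmodgerm}.
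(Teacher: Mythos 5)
Your proposal is correct and follows essentially the same route as the paper: reduce both billiard maps to the normal form of Theorem \ref{addthm3}, identify the resulting boundary interval as having length $\frac12\mcl(\gamma_i)$ in the Lazutkin parameter (your formula $\partial_s\tau_i(s,0)=\kappa_i^{2/3}(s)/2$ is exactly the paper's (\ref{newstar})), and show that any symplectic commutant of the normal form restricts to a translation of the boundary interval --- the paper packages your Steps 1 and 2 as Proposition \ref{proform}, whose symplectic case is precisely your computation $\bigl(\partial_\tau A(\tau,0)\bigr)^3=1$. The ``main obstacle'' you flag at the end is not actually needed for the theorem: only the restriction of the commutant to $\{\tilde h=0\}$ matters, and your first-order matching of the commutation relations (divide by $\sqrt{\tilde h}$ and let $\tilde h\to 0$, using $C^1$-smoothness up to the boundary) already yields $\partial_\tau A(\tau,0)=\sqrt{b(0)}$ and $b(0)=1$ without any flatness assertion for the full functions $A$ and $B$.
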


Theorems \ref{thconj1} and \ref{thconj2} will be deduced from Theorem \ref{thconj3} using the following 
propositions on $C^\infty$-lifted strongly billiard-like maps and lemma on curves with asymptotic line at infinity. 

\begin{proposition} \label{proform} Let $F(s,y)=(s+w(s)\sqrt y+O(y),y+O(y^{\frac32}))$ be a $C^\infty$-lifted 
strongly billiard-like map, see (\ref{fbm}), defined on $U\cup J$, where $J=(a,b)\times\{0\}$ and 
$U\subset\rr\times\rr_+$ is a domain adjacent to $U$. Let $H(s,y)=(H_1(s,y),H_2(s,y))$ be a $C^\infty$-smooth diffeomorphism 
of the domain with boundary $U\cup J$ onto its image in $\rr\times\rr_{\geq0}$ 
 that conjugates $F$ with its normal form 
$\La:(t,z)\mapsto(t+\sqrt z,z)$, i.e., $H\circ F\circ H^{-1}=\La$.   Fix a $s_0\in(a,b)$. 

1) The diffeomorphism $H$ is orientation-preserving, $H(J)\subset\rr\times\{0\}$, and the restriction $H_1(s,0)$ to 
$J$ of  its first component is an increasing function.

 2) If $H$ is symplectic, then, up to additive constant, 
\begin{equation}H_1(s,0)=t_L(s):=\int_{s_0}^sw^{-\frac23}(s)ds.\label{hso}\end{equation} 

3) If $H$ is not necessarily symplectic, then 
\begin{equation}H_1(s,0)=\alpha t_L(s)+\beta \text{ for some  } \alpha>0 \text{ and } 
\beta\in\rr.\label{hso2}\end{equation} 
\end{proposition}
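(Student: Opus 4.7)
The plan is to derive everything from the conjugation equation $H\circ F=\Lambda\circ H$ read near the boundary $J$, using the $C^\infty$-smooth liftings $\wt F$ and $\wt\Lambda(t,\zeta)=(t+\zeta,\zeta)$ with $\zeta=\sqrt z$. Writing $H(s,y)=(H_1,H_2)$ with $H_2(s,0)\equiv 0$, I factor $H_2(s,y)=y\,h(s,y)$, where $h(s,0)>0$ by diffeomorphicity; then the lifting $\wt H(s,\zeta)=(H_1(s,\zeta^2),\,\zeta\sqrt{h(s,\zeta^2)})$ is $C^\infty$-smooth up to $\zeta=0$, exactly as in the proof of Proposition~\ref{classinv}. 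For part~(1), $\{\zeta=0\}$ is the fixed-point set of $\wt F$ near $J$ because $w>0$ displaces off-boundary points in the first coordinate, so $H(J)\subset\rr\times\{0\}$ and $H$ preserves the upper half-plane. Differentiating $\wt H_1\circ\wt F=\wt H_1+\wt H_2$ in $\zeta$ at $\zeta=0$, using $\wt F(s,\zeta)=(s+w(s)\zeta+O(\zeta^2),\,\zeta+O(\zeta^2))$ and $\partial_\zeta\wt H_2(s,0)=\sqrt{h(s,0)}$, yields the basic identity
\[
w(s)\,\partial_s H_1(s,0)=\sqrt{h(s,0)},\qquad(\ast)
\]
which shows $\partial_s H_1(s,0)>0$ and completes~(1).

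For part~(2), symplecticity gives $\det dH\equiv1$, and since $\partial_s H_2(s,0)=0$ this reduces on $J$ to $\partial_s H_1(s,0)\cdot h(s,0)=1$. Substituting $h(s,0)=1/\partial_s H_1(s,0)$ into $(\ast)$ yields $w(s)(\partial_s H_1(s,0))^{3/2}=1$, hence $\partial_s H_1(s,0)=w(s)^{-2/3}$, and integration produces formula~(\ref{hso}).

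For part~(3), Theorem~\ref{addthm3} furnishes a symplectic conjugation $H^{\mathrm{sym}}$ of $F$ to $\Lambda$, so $\Phi:=H\circ(H^{\mathrm{sym}})^{-1}$ is a smooth self-conjugation of $\Lambda$ on a domain adjacent to an interval in $\rr\times\{0\}$. From $\Phi_2\circ\Lambda=\Phi_2$, i.e.\ $\Phi_2(t+\sqrt z,z)=\Phi_2(t,z)$, I expand $\Phi_2(t,z)=\sum_k a_k(t)z^k$ with $a_k\in C^\infty$ and collect like powers of $z^{1/2}$ in the invariance identity; the $z^{1/2}$-coefficient gives $a_0'(t)=0$ (recovering the known $a_0\equiv 0$) and the $z^{3/2}$-coefficient gives $a_1'(t)=0$, so $a_1\equiv\mu>0$ (positivity from diffeomorphicity). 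Dividing the first-component equation $\Phi_1(t+\sqrt z,z)-\Phi_1(t,z)=\sqrt{\Phi_2(t,z)}$ by $\sqrt z$ and taking $z\to 0^+$ then yields $\partial_t\Phi_1(t,0)=\sqrt\mu$, so $\Phi_1(t,0)=\sqrt\mu\,t+\text{const}$ is affine with positive slope. Combining with part~(2) applied to $H^{\mathrm{sym}}$ gives $H_1(s,0)=\Phi_1(H_1^{\mathrm{sym}}(s,0),0)=\sqrt\mu\,t_L(s)+\text{const}$, which is~(\ref{hso2}) with $\alpha=\sqrt\mu$.

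I expect the mildly delicate step to be the Taylor-coefficient argument in part~(3) that extracts $a_1'\equiv 0$ from invariance of $\Phi_2$ under a shift with shrinking period $\sqrt z$; once this rigidity of self-conjugations of $\Lambda$ on the boundary is in hand, parts~(1) and~(2) are straightforward Taylor expansions along $J$.
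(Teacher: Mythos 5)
Your proof is correct. Parts (1) and (2) follow essentially the paper's own route: the paper likewise extracts the identity $\frac{\partial H_1}{\partial s}(s,0)=\sqrt{\frac{\partial H_2}{\partial y}(s,0)}$ from the $\sqrt y$-coefficient of the first component of the conjugacy equation and combines it with the Jacobian identity $\operatorname{Jac}(s,0)=\bigl(\frac{\partial H_1}{\partial s}(s,0)\bigr)^3$ (the only cosmetic difference being that the paper first normalizes $w\equiv1$ by the explicit symplectic Lazutkin change $(s,y)\mapsto(t_L(s),w^{2/3}(s)y)$, while you keep $w$ general, which is why your identity $(\ast)$ carries the factor $w(s)$). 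Part (3) is where you genuinely diverge. The paper argues dynamically: it takes an $F$-orbit near $J$, whose $s$-coordinates form an asymptotic arithmetic progression of step $\simeq\sqrt{y_0}$ (citing \cite[lemma 7.13]{gpor}), notes that its $H$-image must be an exact arithmetic progression for $\La$, and compares increments near two base points to force $\frac{\partial H_1}{\partial s}(s,0)\equiv const$ by contradiction. You instead factor $H=\Phi\circ H^{\mathrm{sym}}$ through the symplectic normalization of Theorem \ref{addthm3} and prove a boundary rigidity statement for self-conjugations $\Phi$ of $\La$ by matching Puiseux coefficients in $\sqrt z$: the vanishing of the $z^{3/2}$-coefficient of $\Phi_2(t+\sqrt z,z)-\Phi_2(t,z)$ gives $a_1'\equiv0$, and the first-component equation divided by $\sqrt z$ then gives $\partial_t\Phi_1(t,0)\equiv\sqrt\mu$. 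Both arguments are sound and non-circular (Theorem \ref{addthm3} is established before this point, and is in any case invoked by the paper itself in the proof of Theorem \ref{thconj4}); your version is purely local and formal and avoids the external orbit-asymptotics lemma, at the price of invoking the full normal-form theorem rather than only the elementary Lazutkin coordinate change, whereas the paper's orbit argument is more robust in that it would survive with less regularity of $H$ along $J$.
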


\begin{proposition} \label{proform2} Let $F$, $U$, $J$ be the same, as in Proposition \ref{proform}. 
Let $\wt F$ be the lifting of the  map $F$ to the coordinates 
$(s,\psi)$, $\psi^2=y$, which is a $C^\infty$-smooth diffeomorphism 
defined on $\wt U\cup J$, $\wt U=\{(s,\psi) \ | \ (s,\psi^2)\in U, \ \psi>0\}$. Let $\wt H$ be a $C^\infty$-smooth diffeomorphism defined on 
$\wt U\cup J$ conjugating $\wt F$ with the diffeomorphism $\wt\La:(t,\wt z)\mapsto(t+\wt z,\wt z)$: 
$\wt H\circ\wt F\circ\wt H^{-1}=\wt\La$. Then $\wt H(J)\subset\rr\times\{0\}$, and the first component of the map 
$\wt H$ satisfies  (\ref{hso2}).
\end{proposition}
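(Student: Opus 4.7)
The plan is to reduce Proposition \ref{proform2} to Proposition \ref{proform} by producing an auxiliary conjugacy $\wt H_0$ in the $(s,\psi)$-coordinates whose boundary trace is already the Lazutkin parameter, and then controlling the possible differences between $\wt H$ and $\wt H_0$ through a centralizer analysis of $\wt\Lambda$. First, $\wt H(J)\subset\rr\times\{0\}$ because every point of $J$ is fixed by $\wt F$ and the conjugacy relation forces $\wt H(J)$ to lie in the fixed-point set $\rr\times\{0\}$ of $\wt\Lambda$; throughout I assume $\wt H$ sends $\{\psi>0\}$ into $\{\wt z>0\}$, the case consistent with the form of (\ref{hso2}).

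Theorem \ref{addthm3} produces a $C^\infty$-smooth symplectomorphism $H=(H_1,H_2)$ conjugating $F$ with $\La:(t,z)\mapsto(t+\sqrt z,z)$, for which Statement 2 of Proposition \ref{proform} gives $H_1(s,0)=t_L(s)+\text{const}$. Writing $H_2(s,y)=y\,g(s,y)$ with $g\in C^\infty$ and $g(s,0)>0$, I lift $H$ to $(s,\psi)$-coordinates by setting
\begin{equation*}
\wt H_0(s,\psi):=\bigl(H_1(s,\psi^2),\ \psi\sqrt{g(s,\psi^2)}\bigr),
\end{equation*}
which is $C^\infty$ (by positivity of $g$) on a domain $\wt U_0\cup J$ adjacent to $J$ and maps $\{\psi>0\}$ into $\{\wt z>0\}$. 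A direct computation based on the lifting identity $\wt F_2(s,\psi)=\sqrt{F_2(s,\psi^2)}$ verifies $\wt H_0\circ\wt F=\wt\Lambda\circ\wt H_0$, and by construction $\wt H_{0,1}(s,0)=H_1(s,0)=t_L(s)+\text{const}$.

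The composition $\wt G:=\wt H\circ\wt H_0^{-1}$ is a $C^\infty$ diffeomorphism near a subinterval of $\rr\times\{0\}$ in $\{\wt z\ge 0\}$ that commutes with $\wt\Lambda$. Writing $\wt G=(P,Q)$, commutation gives $Q(t+\wt z,\wt z)=Q(t,\wt z)$ and $P(t+\wt z,\wt z)=P(t,\wt z)+Q(t,\wt z)$. Since both $\wt H$ and $\wt H_0$ send the boundary line to itself, $Q(t,0)\equiv 0$, hence $Q(t,\wt z)=\wt z\,q(t,\wt z)$ with $q\in C^\infty$ and $q(t+\wt z,\wt z)=q(t,\wt z)$ for $\wt z>0$. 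For any $t_1,t_2$ in the range and small $\wt z>0$, choosing $n\in\zz$ with $|t_1+n\wt z-t_2|\le\wt z$ and using periodicity gives $q(t_1,\wt z)=q(t_1+n\wt z,\wt z)$; taking $\wt z\to 0^+$ (so that $t_1+n\wt z\to t_2$) yields $q(t_1,0)=q(t_2,0)$, so $q(\cdot,0)\equiv q_0$ is constant. Substituting into the $P$-equation, dividing by $\wt z$ and letting $\wt z\to 0^+$ gives $\partial_t P(t,0)=q_0$, so $P(t,0)=q_0 t+c$. Because $\wt G$ maps $\{\wt z>0\}$ into itself (as do $\wt H$ and $\wt H_0$), $q_0>0$. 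Finally, $\wt H_1(s,0)=P(\wt H_{0,1}(s,0),0)=q_0(t_L(s)+\text{const})+c=\alpha t_L(s)+\beta$ with $\alpha=q_0>0$, which is (\ref{hso2}).

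The main obstacle is the centralizer analysis in the third paragraph: showing that every $C^\infty$-smooth diffeomorphism commuting with $\wt\Lambda$ near $\{\wt z=0\}$ must act affinely on that boundary line. This is the core rigidity input; once it is in place, the construction of $\wt H_0$ and the composition argument are routine consequences of Proposition \ref{proform} and Theorem \ref{addthm3}.
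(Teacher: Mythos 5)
Your proof is correct, but it takes a genuinely different route from the paper's. The paper disposes of Proposition \ref{proform2} in one line, by rerunning the orbit argument of Proposition \ref{proform}: the $s$-coordinates of an $\wt F$-orbit in the $(s,\psi)$-chart form an asymptotic arithmetic progression with step $\simeq\psi_0$ (the same orbit as for $F$, since the lift does not change the $s$-coordinates), whereas the abscissas of the image orbit under $\wt H$ form an exact arithmetic progression for $\wt\La$; comparing increments near two boundary points where $\partial\wt H_1/\partial s$ would differ gives a contradiction, forcing $\partial\wt H_1/\partial s(\cdot,0)\equiv const$. You instead manufacture a reference conjugacy $\wt H_0$ by lifting the symplectic normalization of Theorem \ref{addthm3} via the square-root substitution (exactly the computation of Proposition \ref{classinv}), and then prove a rigidity statement for the centralizer of $\wt\La$: any $C^\infty$ diffeomorphism commuting with $\wt\La$ near the boundary acts affinely there. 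Your centralizer analysis is sound — the functional equations for $(P,Q)$, the factorization $Q=\wt z\,q$, the density argument showing $q(\cdot,0)\equiv q_0$, and the passage to $\partial_tP(t,0)=q_0$ are all valid given $C^\infty$-smoothness up to the boundary. Your approach buys a reusable structural fact about the centralizer (which is essentially what underlies Theorems \ref{thconj3}--\ref{thconj4}), at the cost of invoking Theorem \ref{addthm3}; the paper's argument is more elementary and self-contained, needing only the orbit asymptotics already established for Proposition \ref{proform}. Two small points to tighten: (i) the step ``$\wt G$ maps $\{\wt z>0\}$ into itself, hence $q_0>0$'' only gives $q_0\geq0$ as written; to exclude $q_0=0$ invoke non-degeneracy of $d\wt G$ on the boundary (if $q_0=0$ then the second row of the Jacobian vanishes there) or injectivity of $P(\cdot,0)$. (ii) The standing assumption that $\wt H$ sends $\{\psi>0\}$ into $\{\wt z>0\}$ is indeed needed for the sign $\alpha>0$ in (\ref{hso2}); you flag it explicitly, and it is implicit in the paper's statement and in the way the proposition is used in the proof of Theorem \ref{thconj3}.
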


\begin{lemma} \label{lazconv} Let a $C^\infty$-smooth strictly convex planar curve $\gamma$ 
go to infinity in some direction, 
and let it have a finite asymptotic tangent line at infinity in this direction (in the same sense, as in 
Theorem \ref{thconj1}). Then the improper integral (\ref{lazl}) defining the Lazutkin length converges  in the 
given direction. 
\end{lemma}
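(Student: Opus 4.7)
The plan is to pass to a graph parametrization of $\gamma$ with respect to the asymptotic tangent line at infinity, translate the hypothesis into an integrability condition via an integration-by-parts identity, and close the estimate using H\"older's inequality.

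\textbf{Setup.} First I would choose affine coordinates on $\rr^2$ so that the given asymptotic tangent line of $\gamma$ at infinity is the $x$-axis and the branch of $\gamma$ escaping to infinity in the specified direction corresponds to $x\to+\infty$; after a translation we may arrange $x\geq x_0$ for some $x_0>0$. Since the tangent direction converges to horizontal, for $x_0$ large enough this branch is the graph of a $C^\infty$-smooth function $y=f(x)$ on $[x_0,+\infty)$, and strict convexity of $\gamma$ forces $f''(x)>0$ (after possibly reflecting $y\mapsto -y$). The condition that $T_{(x,f(x))}\gamma$ converges to the $x$-axis as $x\to+\infty$ translates into $f'(x)\to 0$ together with $h(x):=f(x)-xf'(x)\to 0$, since $h(x)$ is precisely the $y$-intercept of the tangent line to the graph at abscissa $x$.

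\textbf{Key identity and H\"older bound.} Differentiation gives $h'(x)=-xf''(x)$, so $h$ is strictly decreasing on $[x_0,+\infty)$; combined with $\lim_{x\to+\infty}h(x)=0$ this yields
\[
\int_{x_0}^{+\infty} x\, f''(x)\, dx \;=\; h(x_0)-\lim_{x\to+\infty}h(x) \;=\; h(x_0) \;<\; +\infty.
\]
Since $\kappa=f''(1+f'^{\,2})^{-3/2}$ and $ds=(1+f'^{\,2})^{1/2}\,dx$, we have $\kappa^{2/3}\,ds=(f'')^{2/3}(1+f'^{\,2})^{-1/2}\,dx\leq (f'')^{2/3}\,dx$, so an application of H\"older's inequality with conjugate exponents $3/2$ and $3$ gives
\[
\int_{x_0}^{+\infty}(f'')^{2/3}\,dx \;=\; \int_{x_0}^{+\infty}\bigl(x\,f''(x)\bigr)^{2/3}\,x^{-2/3}\,dx \;\leq\; \left(\int_{x_0}^{+\infty}x\,f''(x)\,dx\right)^{2/3}\!\left(\int_{x_0}^{+\infty}x^{-2}\,dx\right)^{1/3},
\]
both factors being finite (the second equals $x_0^{-1/3}$). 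Hence the improper integral in (\ref{lazl}) restricted to the given end of the parameter interval is finite, proving convergence of the Lazutkin length in that direction.

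\textbf{Obstacles.} The only conceptually nontrivial point is the geometric translation of the hypothesis "finite asymptotic tangent line at infinity" into the analytic statement $h(x)\to 0$, which relies on identifying $h(x)$ as the $y$-intercept of the tangent and on the convergence of the tangent direction to horizontal. After that, the argument is a clean H\"older estimate with exponents dictated by the power $2/3$ in the Lazutkin integrand, and I do not foresee any serious difficulty.
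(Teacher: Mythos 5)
Your proof is correct and follows the same overall strategy as the paper's: pass to a graph $y=f(x)$ over the asymptotic tangent line, reduce convergence of $\int\kappa^{2/3}\,ds$ to that of $\int (f'')^{2/3}\,dx$, and close with exactly the same H\"older estimate against $\int x f''(x)\,dx$ and $\int x^{-2}\,dx$. The only place where you genuinely diverge is in proving $\int_{x_0}^{\infty} x f''(x)\,dx<\infty$: the paper integrates by parts to get $xf'(x)\big|^{+\infty}$ plus a term involving $f(1)$, uses the normalization $f(x)\to 0$ (i.e., that the curve actually approaches its asymptotic tangent line), and rules out divergence by a short contradiction argument ($xf'(x)\to+\infty$ would force $f'>1/x$ and hence $f\to\infty$). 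You instead observe that $h(x)=f(x)-xf'(x)$ is the $y$-intercept of the tangent line, so $h\to 0$ is literally part of the hypothesis that the tangent lines converge to the $x$-axis, and $h'=-xf''$ makes the integral telescope to $h(x_0)$. This is a slightly cleaner route: it uses the full strength of the hypothesis (convergence of both slope and intercept) directly, and avoids having to justify the normalization $f(x)\to 0$ and the contradiction step. Both arguments are sound.
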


\begin{remark} For a $C^\infty$-smooth strictly convex 
planar curve going to infinity, existence of finite asymptotic tangent line 
is not a necessary condition for convergence of the improper integral (\ref{lazl}) defining the Lazutkin length. 
Namely, consider the graph $\{ y=x^r\}\subset[1,+\infty)\times[1,+\infty)$, $r>1$. One has 
\begin{equation}\int\kappa^{\frac23}(s)ds<+\infty, \ \text{ if and only if } \  r>2.\label{ifr>2}\end{equation}
Indeed,  \ $ds=\sqrt{1+r^2x^{2(r-1)}}dx$, \ $\kappa(s(x))=\frac{r(r-1)x^{r-2}}{(1+r^2x^{2(r-1)})^{\frac32}}$, \ 
see (\ref{implaz}), 
$$ \kappa^{\frac23}(s(x))ds=(r(r-1))^{\frac23}
\frac{x^{\frac23(r-2)}}{\sqrt{1+r^2x^{2(r-1)}}}dx\simeq(r(r-1))^{\frac23}x^\nu dx, \ \nu=-\frac{r+1}3.$$
Therefore, the integral in (\ref{ifr>2}) converges, if and only if $\nu<-1$, i.e., $r>2$. 
In the case of parabola $\{ y=x^2\}$ the integral (\ref{ifr>2}) diverges. 
\end{remark}

\subsection{Plan of the proof of main results}

In Subsection 2.1 we recall the above-mentioned Marvizi -- Melrose result \cite[theorem 3.2]{mm} (with proof)  yielding $C^{\infty}$-smooth coordinates in which a $C^\infty$-lifted strongly billiard-like map $F$ takes the form
 $F(\tau,h)=(\tau+\sqrt h+\flat(h), h+\flat(h))$. It implies that the lifted map $\wt F$, written 
 in the coordinates $(\tau,\phi)$, $\phi=\sqrt h$, takes form (\ref{tauphi}). 

Theorem \ref{thm33}, Statement 1) will be proved in  Subsections 2.2--2.4. 
To do this, first in Subsection 2.2 we construct a fundamental domain for the map 
$\wt F$ (a curvilinear sector $\Delta$ with vertex at a point in $J$) and an $\wt F$-invariant function $\wt\phi$ defined on a bigger sector that  is $\phi$-flatly close to $\phi$ 
on the latter bigger sector. Then in Subsection 2.3 we construct its $\wt F$-invariant extension  along the $\wt F$-orbits and show 
that it is well-defined on a domain adjacent to $J$. In Subsection 2.4 we prove 
that thus extended function $\wt\phi$ is $C^{\infty}$-smooth 
and $\phi$-flatly close to $\phi$. This will prove Statement 1) of Theorem \ref{thm33}. Its Statement 2) on normal 
form will be proved in Subsection 2.5. 

The existence statement in Theorem \ref{thm3} will be deduced from Statement 1) of Theorem \ref{thm33} 
in Subsection 2.6, where we will also prove  Theorem \ref{addthm3}. 
Existence  in Theorems \ref{thm1}, \ref{thm2}  and \ref{thm2closed} will be proved in Subsection 2.7. 
The results from Subsection 1.4 on jets and space of germs of foliations will be proved in Subsection 2.8. 
Proposition \ref{distgerm} and 
non-uniqueness statements in main theorems    will be proved in Subsection 2.9. 

The results of Subsection 1.5 on conjugacy of billiard maps near the boundary will be 
proved in Subsection 2.10.

\subsection{Historical remarks} 
The Birkhoff Conjecture was first stated in print by H. Poritsky \cite{poritsky}, who proved it under additional condition that 
for any two nested closed caustics the smaller one is a caustic of the billiard played in the bigger one; the same result was later obtained 
in \cite{amiran}. One of the most famous results 
on the Birkhoff Conjecture is due to M. Bialy \cite{bialy}, who proved that if the phase cylinder of the billiard is foliated by non-contractible invariant 
closed curves, then the billiard boundary is a circle; see also another proof in \cite{wojt}. Recently V. Kaloshin and A. Sorrentino 
proved that any integrable deformation of an ellipse is an ellipse \cite{kalsor}. Very recently M. Bialy and A. E. Mironov proved 
the Birkhoff Conjecture for centrally-symmetric billiards having a family of closed caustics that extends up to a caustic tangent to four-periodic orbits 
\cite{bm6}. For a detailed survey of the Birkhoff Conjecture see \cite{kalsor, KS18, bols, bm, bm6, 
gl2, tab08} and references therein. 

Existence of a Cantor family of closed caustics in every strictly convex bounded planar billiard with sufficiently smooth boundary was 
proved by V. F. Lazutkin \cite{laz} using KAM type arguments. 

R. Melrose proved that for every $C^\infty$-smooth germ $\gamma$ of strictly convex planar curve there exists a germ of $C^\infty$-smooth 
foliation by caustics of the billiard  played on $\gamma$, with $\gamma$ being a leaf \cite[p.184, proposition (7.14)]{melrose1}. 

S. Marvizi and R. Melrose 
have shown that the billiard ball map $\mct$ in a planar domain bounded by a $C^\infty$-smooth strictly convex closed curve $\gamma$ 
always  has an {\it asymptotic first integral} on a domain with boundary in the space of oriented lines: a domain adjacent to the 
family of tangent lines to $\gamma$. 
Namely, there exists a $C^\infty$-smooth function $F$ 
on the closure of a domain as above such that 
the difference $F\circ T-F$ is $C^\infty$-smooth there,  and it is flat at the points of the family of tangent lines to $\gamma$; 
see \cite[theorem (3.2)]{mm}; see also  statement of their result in Theorem \ref{thmm} below.

(Strongly) billiard-like maps were introduced and studied in \cite{gpor}, where results on their dynamics were applied to curves with 
Poritsky property.  

V. Kaloshin and E.K.Koudjinan proved that for a  non-integrable billiard bounded by 
a strictly convex closed curve, the Taylor coefficients of the normalized Mather 
$\beta$-function are invariant under $C^\infty$-conjugacies \cite{kalkoudj}. They also obtained a series 
of results on conjugacy of elliptic billiard maps, showing in particular that global topological 
conjugacy implies similarity of underlying ellipses.

\section{Construction of foliation by invariant curves. Proofs of main results}

\subsection{Marvizi--Melrose construction of an "up-to-flat" first integral}

Here we recall the following Marvizi--Melrose theorem  with proof. Though it was stated in \cite{mm} for 
billiard ball maps, its statement and proof remain valid for $C^\infty$-lifted strongly billiard-like maps. 
\begin{theorem} \label{thmm} \cite[theorem (3.2)]{mm}. 
1) Let $V\subset(a,b)\times\rr_{>0}\subset\rr^2_{s,y}$ be a domain adjacent to 
the interval $J:=(a,b)\times\{0\}$. Let $F:V\cup J\to\rr\times\rr_{\geq0}$ be 
a $C^{\infty}$-lifted 
strongly billiard-like map. There exist a domain $W\subset V$ adjacent to $J$ 
and a real-valued $C^{\infty}$-smooth function $h:W\cup J\to\rr_{\geq0}$, 
$h|_J\equiv0$, $\frac{\partial h}{\partial y}|_J>0$, such that the difference $h\circ F-h$ is $C^{\infty}$-smooth 
and $y$-flat. Moreover, 
one can normalize $h$ as above  so that the mapping 
$F$ coincides, up to $y$-flat terms, 
 with the time 1 map of the flow of the Hamiltonian vector 
field with the Hamiltonian function $\frac23 h^{\frac32}$. This normalization determines 
the asymptotic Taylor series $h(s,y)=\sum_{k=1}^{+\infty}h_k(s)y^k$ of the function $h(s,y)$ uniquely. 

2) The analogue of the above statement holds if $J$ is replaced by the coordinate circle 
$S^1=S^1\times\{0\}$, $S^1:=\rr_s\slash\zz$, lying in the cylinder $C:=S^1\times[0,\var)$ equipped with the standard area form and $F$ is a strongly billiard-like map $C\to S^1\times\rr_{\geq0}$.  In this case 
the coefficients $h_k(s)$ of the above normalized series are  1-periodic and $C^{\infty}$-smooth. 

3) Let $h$ be the function normalized as in Statement 1). Let 
$\tau$ denote the time function for the Hamiltonian vector field with the 
Hamiltonian function $h$. In the coordinates $(\tau,h)$  (which are symplectic) 
 the map $F$ takes the form 
\begin{equation} F:(\tau,h)\mapsto(\tau+\sqrt h+\flat(h), h+\flat(h)).\label{taut}\end{equation}
\end{theorem}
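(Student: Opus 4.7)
The plan is to construct $h$ as a formal Taylor series $\hat h(s,y) = \sum_{k\geq 1} h_k(s) y^k$ with $h_1 > 0$, solve the invariance equation $\hat h \circ F \equiv \hat h$ order by order in $y$, pin down the coefficients by a Hamiltonian normalization, and then realize the series by Borel's lemma. It is convenient to pass to the lifted smooth coordinates $(s,z)$ with $z = \sqrt y$, in which $\wt F$ extends smoothly to $\{z = 0\}$ and has the form $\wt F(s,z) = (s + w(s)z + z^2 a(s,z),\ z + z^2 b(s,z))$ with $a,b$ $C^\infty$-smooth. A smooth function of $y = z^2$ is automatically even in $z$, so invariance modulo arbitrarily high powers of $z$ is the same as invariance modulo arbitrarily high powers of $y$, and this is the correct formal framework.

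Substituting the ansatz into $\hat h \circ \wt F - \hat h$ and collecting the coefficient of $z^{2k+1}$ yields, after bookkeeping, a first-order linear ODE
\[
w(s)\, h_k'(s) + A_k(s)\, h_k(s) = R_k(s),
\]
where $A_k$ depends only on derivatives of $w,a,b$ at $z = 0$ and $R_k$ depends only on $h_1, \dots, h_{k-1}$ and their derivatives. Since $w(s) > 0$ on $(a,b)$, this equation is solvable on the whole $(a,b)$ and determines $h_k$ up to one integration constant. The coefficients of the even powers $z^{2k}$ should impose no further constraint on $h_k$; this is the crux of the construction and is where the symplectic (and, for the stronger statement, involution $\wt F = I\circ\beta$) structure of $F$ cancels the potential obstructions. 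I would verify this cancellation either by computing directly using $I(s,z) = (s,-z)$ together with $\beta^2 = \operatorname{Id}$, or by exploiting $F^*(ds \wedge dy) = ds \wedge dy$ to relate the would-be obstructions to exact differentials. Induction from $h_1$ then produces a unique normalized formal series once the integration constants are fixed.

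To fix the remaining freedom, I would introduce the time function $\tau$ of the Hamiltonian vector field of $h$, giving symplectic coordinates $(\tau, h)$. The Hamiltonian flow of $\tfrac{2}{3} h^{3/2}$ in these coordinates is $(\tau, h) \mapsto (\tau + t\sqrt h,\ h)$, so requiring that $F$ coincide with its time-$1$ map modulo $h$-flat terms is equivalent to forcing the $\tau$-component of $F$ to equal $\tau + \sqrt h + \flat(h)$ and the $h$-component to equal $h + \flat(h)$. Pulling this requirement back through the ODEs fixes each integration constant uniquely, yielding the asserted uniqueness of the normalized asymptotic Taylor series of $h$ in $y$ and, directly from the coordinate change, the normal form (\ref{taut}) of Statement~3.

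Finally, Borel's lemma, applied with the parameter $s$ ranging over compact subarcs of $(a,b)$, realizes the normalized formal series as a genuine $C^\infty$-smooth function $h(s,y)$ on some domain $W \cup J$ with $W \subset V$ adjacent to $J$; by construction $h \circ F - h$ is $C^\infty$-smooth and $y$-flat on $W \cup J$. Statement~2 (the closed-curve case) follows identically: the recursion is canonical and commutes with $s \mapsto s + 1$, so the $h_k$ are automatically $1$-periodic and smooth on $S^1$. The main obstacle I anticipate is the bookkeeping behind the cancellation of obstructions at the even orders of $z$, which must be extracted from area-preservation (and the involution structure) rather than added as a hypothesis; once this is secured, the ODE-by-ODE extension and the Borel summation step are essentially mechanical, though care is needed so that the Borel sum is defined on a domain adjacent to the full interval $J$ rather than only a germ at a single point.
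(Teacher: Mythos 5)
Your overall skeleton (formal series in $y$, order-by-order ODEs in $s$, Hamiltonian normalization to fix the integration constants, Borel summation over compact subarcs) matches the paper's. But there is a genuine gap at exactly the point you flag as "the crux": you assert that the coefficients of the even powers $z^{2k}$ in $\hat h\circ\wt F-\hat h$ "should impose no further constraint," and you only sketch two possible ways you \emph{would} verify this. This is not a removable bookkeeping issue. At order $z^{2k+2}$ the coefficient $h_{k+1}$ contributes nothing (its leading terms cancel identically), so the vanishing of that coefficient is a closed condition on $h_1,\dots,h_k$, which are already pinned down (up to constants) by the odd-order ODEs. Without an actual cancellation argument the inductive construction is over-determined and the proof does not close.

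The paper circumvents this obstruction rather than confronting it, and that is the one idea your proposal is missing. It first builds a series $g=\sum g_k(s)z^{2k}$ subject only to the \emph{odd-order} conditions, namely that $g\circ\wt F$ contain no odd powers of $z$; each such condition is a solvable first-order linear ODE $g_n'w-\tfrac{2n}{3}w'g_n=-b_n$ (using $q=-\tfrac23 w'$, which is where area-preservation enters), with no compatibility constraints. It then sets $t:=g+g\circ\beta$, where $\wt F=I\circ\beta$: since $g$ and $g\circ\wt F=g\circ\beta$ are both even and $\beta^2=\operatorname{Id}$, the series $t$ is even and $\beta$-invariant, hence $\wt F$-invariant, with leading coefficient $2w^{2/3}(s)>0$. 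This symmetrization is what makes the even-order obstructions vanish for free; it uses the strong (product-of-involutions) structure in an essential way, not merely symplecticity. Your normalization and Borel steps are then fine in outline, though note two further points the paper handles explicitly: the form $\xi(t)=\sqrt{t}\,\psi(t)$ with $\psi$ smooth and $\psi(0)>0$ (needed so that $v(t)=\bigl(\tfrac32\int_0^t\sqrt{p}\,\psi(p)\,dp\bigr)^{2/3}$ is $C^\infty$), and the fact that periodicity of the $h_k$ in Statement 2 is deduced from local uniqueness of the normalized series rather than from the recursion "commuting with $s\mapsto s+1$" (the unnormalized ODE solutions need not be periodic).
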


\begin{proof} 
The lifting $\wt F(s,z)$, $z=\sqrt y$, of the map $F(s,y)$ is $C^{\infty}$-smooth and 
has the form
\begin{equation} \wt F(s,z)=(s+w(s)z+O(z^2), \ z+\frac{q(s)}2z^2+O(z^3)),
\label{lift}\end{equation}
where $q(s)$ is a $C^{\infty}$-smooth function on $(a,b)$. This follows from (\ref{fbm}) and $C^\infty$-liftedness. 
The map $\wt F(s,z)$ admits an asymptotic Taylor series in $z$. 
The map $F$ has the form  
\begin{equation} F(s,y)=(s+w(s)\sqrt y+O(y), \ y+q(s)y^{\frac32}+O(y^2)),
\label{fform}\end{equation} 
by (\ref{lift}), and 
it admits an asymptotic Puiseux series in $y$ involving powers $0, \frac12, 1, \frac32, 2,\dots$. 
The coefficients of both series are $C^{\infty}$-smooth functions in $s$.  Therefore, the mapping $F$ acts  by the formula $h\mapsto h\circ F$ not only on functions, but also on formal Puiseux series. It transform each power series 
$h=\sum_{k=1}^{+\infty}h_k(s)y^k$ with 
coefficients being $C^{\infty}$-smooth functions on $(a,b)$ 
to a Puiseux series of the above type. 
 Our goal is to find 
an $F$-invariant power series (or equivalently, an $\wt F$-invariant even power series 
$\sum_{k=1}^{+\infty}h_k(s)z^{2k}$) and then to choose its $C^{\infty}$-smooth representative. 
To do this, we use the following formula 
for the function $q(s)$ in (\ref{fform}), see \cite[formula (1.2)]{laz}, 
\cite[formula (7.18)]{gpor}, which follows from area-preserving property: 
\begin{equation} q(s)=-\frac23 w'(s).\label{qw}\end{equation}

Step 1: constructing an even series $\sum_{k=1}^{+\infty}g_k(s)z^{2k}$ 
whose $\wt F$-image is also an even series. 
We construct  its coefficients $g_k$ by induction as follows. 

Induction base: $k=1$. Let us find a function $g_1(s)$ such that the $\wt F$-image of the function 
$g_1(s)z^2$ contains no $z^3$-term. This is equivalent to the statement saying  that the function 
$g_1(s+w(s)z)(z+\frac{q(s)}2z^2)^2$ contains no $z^3$-term, which is in its turn equivalent 
to the differential equation
$$g_1'(s)w(s)+q(s)g_1(s)=0, \ \ q(s)=-\frac23w'(s),$$
which has a unique solution $g_1(s)=w^{\frac23}(s)$ up to constant factor. 
(Note that $w^{\frac23}(s)y$ is a well-known function:   
 the second  Lazutkin coordinate \cite{laz, mm}.) 
 
 Induction step in the case, when $J=(a,b)\times\{0\}$ is an interval. 
  Let we have already found an even Taylor polynomial 
 $G_{n-1}(s,z):=\sum_{k=1}^{n-1}g_k(s)z^{2k}$, $n\geq2$, such that the asymptotic Taylor series in $z$ of the function $G_{n-1}\circ\wt F$ contains no odd powers of $z$ 
 of degrees no greater than $2n-1$. Let us construct $g_{n}(s)$, 
 set  $G_n(s,z):=\sum_{k=1}^{n}g_k(s)z^{2k}$, so that 
 \begin{equation} G_n\circ\wt F-G_n \text{ contains no } z^{2n+1}-\text{term}.
 \label{gnterm}\end{equation}
Note that $G_n\circ\wt F-G_n$ obviously cannot contain odd powers of degrees less 
 than $2n$. Let $b(s)z^{2n+1}$ denote the degree $2n+1$ term in the Taylor series 
 of the function $G_{n-1}\circ\wt F$.  Condition (\ref{gnterm}) 
 is equivalent to the differential equation
 \begin{equation} g_n'(s)w(s)-\frac{2n}3w'(s)g_n(s)=-b(s),\label{gnb}\end{equation}
which  always has a solution $g_n(s)$ well-defined on the  interval $(a,b)$.  

Step 2. Constructing an $\wt F$-invariant series. The mapping $\wt F$ is the 
product $I\circ\beta$ of two involutions: $I(s,z)=(s,-z)$ and $\beta$. Let 
$g:=\sum_{k=1}^{+\infty}g_k(s)z^{2k}$ be the series constructed on Step 1. 
One has 
\begin{equation} g\circ\wt F=(g\circ I)\circ\beta=g\circ\beta,\label{gcircb}\end{equation}
since the series $g$ is even. The series (\ref{gcircb}) is  even (Step 1). Hence, the series 
$$t:=g+g\circ\beta$$
is even  and $\beta$-invariant by construction. Therefore, it is $\wt F$-invariant. 
Its first coefficient is equal to $2g_1(s)=2w^{\frac23}(s)>0$, by construction. 
We denote the $\wt F$-invariant series thus constructed by $t:=\sum_{k=1}^{+\infty}t_k(s)z^{2k}$. 

Step 3: symplectic coordinates and normalization. Let $t(s,y)$ be a 
 function representing the series $\sum_{k=1}^{+\infty}t_k(s)y^{k}$, which is  
 obtained  from the latter series (given by Step 2) by the variable change $y=z^2$.  
It is defined on a domain $W$ adjacent to $J$ and $C^{\infty}$-smooth 
 on $W\cup J$; $t|_J\equiv0$, $\frac{\partial t}{\partial y}|_J>0$.  Let $H_t$ denote the 
 corresponding Hamiltonian vector field. Fix an arbitrary $C^{\infty}$-smooth 
 function $\theta$ such that $d\theta(H_t)\equiv1$, $\theta|_{s=0}=0$: a time function 
for the vector field $H_t$. Then $(\theta,t)$ are symplectic coordinates for the form $\omega=dx\wedge dy$: $\omega=d\theta\wedge dt$. Shrinking $W$ (keeping it adjacent 
to $J$) we can and will consider that they are global coordinates on $W\cup J$.
 The difference $t\circ F-t$ is $t$-flat, by construction, and hence, so is 
 $dF(H_t)-H_t$. Therefore, in the coordinates $(\theta,t)$ 
the symplectic map $F$ takes the form
\begin{equation}F:(\theta,t)\mapsto(\theta+\xi(t), t) + \flat(t).\label{xit}
\end{equation}
In the new coordinates $(\theta,t)$ the map $F$ is $C^{\infty}$-lifted strongly billiard-like,  
as in the old coordinates $(s,y)$, by Proposition \ref{classinv}. 
\medskip

{\bf Claim 1.} {\it The function $\xi(t)$  in (\ref{xit}) has the form 
$\xi(t)=\sqrt t \psi(t)$, where $\psi(t)$ is a 
$C^{\infty}$-smooth 
function on a segment $[0,\var]$, $\var>0$, $\psi\geq0$, $\psi(0)>0$.}

\begin{proof}  Let  $\wt F$ denote the lifting of the map $F$ to the coordinates 
$(\theta,\zeta)$, $\zeta=\sqrt t$. One has $\wt F=I\circ\beta$, where 
$I(\theta,\zeta)=(\theta,-\zeta)$ and $\beta$ is an involution, 
$\beta(\theta,0)\equiv(\theta,0)$. The involution $\beta$ takes the form 
\begin{equation}\beta(\theta,\zeta)=(\theta+r(\zeta),-\zeta)+\flat(\zeta), \ \ 
r(\zeta)=\xi(\zeta^2) \text{ for } \zeta>0.\label{betat}\end{equation}
The function $r(\zeta)$ should be  $C^{\infty}$-smooth, as is $\beta$, 
and $r'(0)>0$ (strong billiard-likedness). 
The condition saying that $\beta$ is an involution 
implies that $r(\zeta)+r(-\zeta)=\flat(\zeta)$. This in its turn implies that  
$r(\zeta)=\zeta \psi(\zeta^2)+\flat(\zeta)$, 
where $\psi$ is a $C^{\infty}$-smooth function;   $\psi(0)=r'(0)>0$. 
This together with (\ref{betat}) implies the statement of the claim.
\end{proof} 

We have to find  a function $h(s,y)$, $h(s,0)\equiv0$, 
such that the Hamiltonian vector field with the 
Hamiltonian function $\frac23h^{\frac32}$ coincides with $\xi(t)\frac{\partial}{\partial\theta}$.  
This function will satisfy the normalization statement of Theorem \ref{thmm}, part 1), 
by construction. We are looking for it as a function depending only on $t$: 
$h(s,y)=v(t)$. The above  Hamiltonian vector field is then equal to 
$\sqrt{v(t)}v'(t)\frac{\partial}{\partial\theta}$. Thus, we have to solve the equation
$$v^{\frac12}(t)v'(t)=\xi(t)=\sqrt t \psi(t), \ v(0)=0.$$
Its solution $v(t)$ is given by the formula
$$v(t)=\left(\frac32\int_0^t\sqrt p\psi(p)dp\right)^{\frac23}.$$
This is a $C^{\infty}$-smooth function, by construction and smoothness of the function 
$\psi(t)$. One has 
$\frac{\partial h}{\partial y}|_J>0$, since $v'(0)=\psi^{\frac23}(0)>0$ and 
$\frac{\partial t}{\partial y}(s,0)=2g_1(s)=2w^{\frac23}(s)>0$, by construction. 
 Uniqueness of the Taylor series in $y$ of the function $h(s,y)$ 
satisfying the above Hamiltonian vector field 
statement (up to flat terms) follows directly,  as in \cite[p.383]{mm}. Statement  1) of 
Theorem \ref{thmm} is proved. Statement 3) follows immediately from Statement 1), 
since in the coordinates $(\tau,h)$, see Statement 3), the Hamiltonian  
field with the Hamiltonian function $\frac23h^{\frac32}$ is equal to $(\sqrt h, 0)$. 
 Statement 2) (case, when $J$ is a circle and 
$F$ is defined on a cylinder bounded by $J$) says 
that the Taylor coefficients of the series in $y$ of the function $h(s,y)$ 
are well-defined functions on the circle $J$. This follows from its the above 
uniqueness statement (which holds locally, in a neighborhood of every point $(s_0,0)\in J$). Theorem \ref{thmm} is proved.
\end{proof}

\subsection{Step 1. Construction of an invariant function on a neighborhood of fundamental domain} 

Here we give the first step of the proof of Theorem \ref{thm33}. We consider 
a fundamental sector $\Delta$ for the map $\wt F$ that is bounded by the segment 
$K=[0,\frac\eta2]$ of the $\phi$-axis, by its $\wt F$-image and by the straightline segment 
connecting their ends. We construct an 
$\wt F$-invariant function  $\wt\phi$ that is $\phi$-flatly close to $\phi$ on 
a sectorial neighborhood $S_{\chi,\eta}$ of  
$\overline\Delta\setminus\{(0,0)\}$. 
See Fig. 5.
\begin{figure}[ht]
  \begin{center}
   \epsfig{file=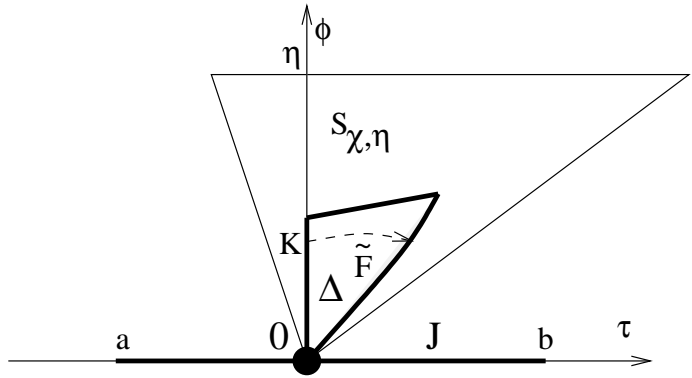}
    \caption{The fundamental domain 
    $\Delta$ and its sectorial neighborhood $S_{\chi,\eta}$.}
    \label{fig:03}
  \end{center}
\end{figure}

Without loss of generality we consider that the $\tau$-interval contains the origin: 
$a<0<b$. Fix a  number $\chi$, $0<\chi<\frac12$. Consider the sectors 
\begin{equation} S_{\chi}=\{ -\chi\phi<\tau<(1+\chi)\phi\}\subset 
\rr_\tau\times(\rr_+)_{\phi},
\label{secd}\end{equation}
 $$S_{\chi,\eta}:= S_{\chi}\cap\{0<\phi<\eta\}$$ 
The domain  $S_{\chi,\eta}$ 
will be the above-mentioned neighborhood of  fundamental sector, where 
we construct an $\wt F$-invariant function. 

\begin{proposition} \label{vi} For every $\chi\in(0,\frac12)$ and $\eta>0$ small enough 
dependently on $\wt F$ and $\chi$ the following statements hold.

(i) The maps $\wt F^{\pm1}$, $\wt F^{\pm2}$ are well-defined on $S_{\chi,2\eta}$.

(ii) The domains $S_{\chi,2\eta}$ and $\wt F^2(S_{\chi,2\eta})$ are disjoint; 
the latter lies on the right from the former.

(iii) The segment $K:=\{0\}\times[0,\frac\eta2]\subset\rr^2_{\tau,\phi}$ and its image $\wt F(K)$ 
intersect just by the origin; $\wt F(K)$ lies on the right from $K$. The 
domain $\Delta\subset S_{\chi,2\eta}$ bounded by $K$, $\wt F(K)$  
and the straightline segment connecting the endpoints of the arcs $K$ and $\wt F(K)$ 
distinct from $(0,0)$ is a fundamental domain for the map $\wt F$. See Fig. 5.
\end{proposition}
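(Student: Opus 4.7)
The plan is to treat $\wt F$ as a $\phi$-flat perturbation of the pure translation $T_0:(\tau,\phi)\mapsto(\tau+\phi,\phi)$. Writing $\wt F=T_0+(\flat(\phi),\flat(\phi))$, the iterate $\wt F^n$ for $n\in\{\pm1,\pm2\}$ remains a $\phi$-flat perturbation of the $n$-fold shift $T_0^n(\tau,\phi)=(\tau+n\phi,\phi)$. All three claims will then reduce to their counterparts for $T_0$ together with the observation that the $\phi$-flat corrections are dominated by the linear $\phi$-terms on any sufficiently small neighborhood of the origin.

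For (i), the closure $\overline{S_{\chi,2\eta}}$ is a compact curvilinear triangle touching $J$ only at $(0,0)$, and can be forced into any prescribed neighborhood of $(0,0)$ by shrinking $\eta$. Since the origin lies in the interior of the domain $V\cup J$ and each iterate $\wt F^n$ displaces $S_{\chi,2\eta}$ only by an $O(\phi)$ amount, for $\eta$ small enough $\wt F^{\pm1}$ and $\wt F^{\pm2}$ are all well-defined on $S_{\chi,2\eta}$.

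For (ii), I would pick $(\tau,\phi)\in S_{\chi,2\eta}$ and set $(\tau',\phi'):=\wt F^2(\tau,\phi)=(\tau+2\phi+\flat(\phi),\phi+\flat(\phi))$. Then $\tau'\geq(2-\chi)\phi+\flat(\phi)$ and $\phi'=\phi+\flat(\phi)$. Membership $(\tau',\phi')\in S_{\chi,2\eta}$ would require $\tau'<(1+\chi)\phi'$, which translates to $(1-2\chi)\phi<\flat(\phi)$; since $1-2\chi>0$ (the critical use of $\chi<\tfrac12$) and the right-hand side decays faster than any power of $\phi$, this inequality fails for every $\phi\in(0,2\eta)$ once $\eta$ is small enough. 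This simultaneously gives disjointness and places $\wt F^2(S_{\chi,2\eta})$ strictly to the right of $S_{\chi,2\eta}$.

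For (iii), $\wt F(K)$ is parametrized by $\phi\in[0,\eta/2]\mapsto\wt F(0,\phi)=(\phi+\flat(\phi),\phi+\flat(\phi))$; its first coordinate is positive on $(0,\eta/2]$ for $\eta$ small, while $K$ has first coordinate identically zero. Since $\wt F$ fixes $J$ pointwise, this forces $K\cap\wt F(K)=\{(0,0)\}$ with $\wt F(K)$ strictly to the right of $K$. The two curved sides of $\Delta$ have slopes $d\tau/d\phi=0$ and $d\tau/d\phi\approx 1$ respectively, both lying in the range $(-\chi,1+\chi)$ defining $S_{\chi,2\eta}$, so $\Delta\subset S_{\chi,2\eta}$. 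The fundamental domain property follows because each $\wt F^k(\Delta)$ is the strip between $\wt F^k(K)$ and $\wt F^{k+1}(K)$, and by (ii) the even-indexed translates march monotonically to the right, ruling out any nontrivial overlap. The only delicate bookkeeping will be checking that $\flat(\phi)$ and each of its $\tau$- and $\phi$-derivatives tend to $0$ uniformly on the shrinking sectors $S_{\chi,2\eta}$ as $\eta\to 0$, which is immediate from the definition of $\phi$-flatness.
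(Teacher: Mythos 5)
Your proof is correct and follows essentially the same route as the paper: the paper's entire argument is that $d\wt F(0,0)$ is the unipotent shear sending the ray $\{\tau=\zeta\phi\}$ to $\{\tau=(\zeta+1)\phi\}$, so that $\wt F^2$ shifts the slope parameter by $2>2\chi+1$; your version just makes the same point through the asymptotic expansion (\ref{tauphi}) and spells out the error estimates more explicitly than the paper does.
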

\begin{proof} One has 
\begin{equation}d\wt F(0,0)=\left(\begin{matrix} 1 & 1\\ 0 & 1\end{matrix}\right).
\label{diffo}\end{equation}
The latter differential sends each line $\{\tau=\zeta\phi\}$ to the line 
$\{\tau=(\zeta+1)\phi\}$. 
This  implies that  for every $\eta>0$ small enough statements (i)--(iii) hold.
\end{proof}

\begin{proposition} \label{vii} For every $\chi\in(0,\frac12)$ and $\eta>0$ small enough 
dependently on $\wt F$ and $\chi$ 
there exists a $C^{\infty}$-smooth and $\wt F$-invariant function $\wt\phi(\tau,\phi)$ 
on $S_{\chi,\eta}$  such that the difference $\wt\phi(\tau,\phi)-\phi$ is $\phi$-flat on $S_{\chi,\eta}$: that is, 
 tends to zero with all its partial derivatives, 
as $(\tau,\phi)\in S_{\chi,\eta}$ tends to zero. 
\end{proposition}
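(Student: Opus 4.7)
I would reduce the problem to solving a cohomological equation along $\wt F$-orbits. The normal form \eqref{tauphi} already shows that $\phi$ is $\wt F$-invariant modulo $\phi$-flat terms: $\phi \circ \wt F = \phi + g$ for a $C^\infty$ function $g$ that is $\phi$-flat. Writing $\wt\phi := \phi + v$, the required invariance $\wt\phi \circ \wt F = \wt\phi$ becomes $v \circ \wt F - v = -g$, to be solved on $S_{\chi,\eta}$ with $v$ itself $\phi$-flat. The key structural fact, from \eqref{diffo} and Proposition \ref{vi}, is that $\wt F$ shifts the angular coordinate $t := \tau/\phi$ by approximately $1$ near the origin, so for $\chi < 1/2$ and $\eta$ small each $\wt F$-orbit visits $S_\chi$ in at most three points---namely only $\wt F^{-1}(z)$, $z$, and $\wt F(z)$ for $z \in S_{\chi,\eta}$---and these iterates are all defined in $S_{\chi,2\eta}$ by Proposition \ref{vi}(i).

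The construction is then an explicit orbit average. I would fix a $C^\infty$ bump function $\chi_0 \colon \rr \to [0,1]$ supported in $(-\chi, 1+\chi)$ with the partition-of-unity identity $\sum_{n \in \zz} \chi_0(t - n) \equiv 1$ (so that $\sum_n \chi_0^{(k)}(t-n) = \delta_{k,0}$ for every $k \geq 0$), set $\theta(\tau, \phi) := \chi_0(\tau/\phi)$ on $S_\chi$, and define
\[
\wt\phi(z) := \sum_{n} \theta(\wt F^n(z))\,\phi(\wt F^n(z)),
\]
the sum ranging over $n \in \zz$ with $\wt F^n(z) \in S_\chi$. By the orbit-finiteness above this is a locally finite sum with at most three nonzero terms at each $z \in S_{\chi,\eta}$, so $\wt\phi$ is $C^\infty$ on $S_{\chi,\eta}$; the invariance $\wt\phi \circ \wt F = \wt\phi$ (wherever both sides are defined) follows immediately by reindexing the sum.

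For the $\phi$-flat closeness I would use the decomposition
\[
\wt\phi(z) - \phi(z) = \sum_n \theta(\wt F^n z)\bigl(\phi(\wt F^n z) - \phi(z)\bigr) + \phi(z)\Bigl(\sum_n \theta(\wt F^n z) - 1\Bigr).
\]
Each difference $\phi(\wt F^n z) - \phi(z)$ is a finite telescoping sum of values $g(\wt F^k z)$, hence $\phi$-flat together with all derivatives. For the second summand, an inductive application of \eqref{tauphi} for $|n| \leq 1$ yields $\tau_n/\phi_n = t + n + \flat(\phi)$; combined with $\sum_n \chi_0^{(k)}(t-n) = \delta_{k,0}$, this shows that $\sum_n \theta(\wt F^n z) - 1$ is $\phi$-flat, as are all its partial derivatives. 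Multiplying by $\phi$ preserves $\phi$-flatness.

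The main technical obstacle is this last flatness verification at every derivative order. The individual summands $\theta(\wt F^n z)\,\phi(\wt F^n z)$ have partial derivatives blowing up like $\phi^{-N}$ as $\phi \to 0$, because the cutoff $\chi_0(\tau/\phi)$ is only $C^\infty$ on $\{\phi > 0\}$. Flatness of $\wt\phi - \phi$ therefore relies on the cancellation $\sum_n \chi_0^{(k)}(t-n) = \delta_{k,0}$, which converts the $\phi^{-N}$-blow-up of individual terms into a $\phi$-flat contribution after summation. Carrying this out at every derivative order requires joint $C^\infty$-control of $\wt F^{\pm 1}$ minus the affine translation $(\tau,\phi)\mapsto(\tau\pm\phi,\phi)$ on $S_{\chi,2\eta}$, which is guaranteed by the $C^\infty$-smoothness and flatness of the $\flat$-corrections in \eqref{tauphi}. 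Choosing $\eta$ sufficiently small ensures both that the iterates $\wt F^{\pm 1}$ remain in the domain of $\wt F$ and that the local finiteness of the defining sum holds uniformly on $S_{\chi,\eta}$.
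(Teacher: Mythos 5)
Your construction is correct and rests on the same basic device as the paper's: a partition of unity in the angular coordinate $\nu=\tau/\phi$ on the sector, used to blend $\phi$ with its pullbacks under iterates of $\wt F$. The difference is in the execution. The paper uses a two-element partition $\rho_1+\rho_2\equiv1$ on $(-\chi,1+\chi)$ and sets $\wt\phi=\rho_1(\nu)\,\phi+\rho_2(\nu)\,\phi\circ\wt F^{-1}$; because the weights sum to $1$ exactly and depend only on the point (not on its orbit), flatness of $\wt\phi-\phi=\rho_2(\nu)(\phi\circ\wt F^{-1}-\phi)$ is immediate, and invariance reduces to checking that whenever $x$ and $\wt F(x)$ both lie in $S_{\chi,\eta}$, the point $x$ sits where $\rho_1\circ\nu\equiv1$ and $\wt F(x)$ sits where $\rho_2\circ\nu\equiv1$. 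Your $\zz$-equivariant orbit sum $\sum_n\chi_0(\nu\circ\wt F^n)\,\phi\circ\wt F^n$ makes invariance a one-line reindexing (cleaner than the paper's case check), but it buys this at the cost that the weights evaluated along the true orbit sum only to $1+\flat(\phi)$ rather than to $1$, since $\nu\circ\wt F^n=\nu+n+\flat(\phi)$; you therefore need the extra verification that $\sum_n\theta(\wt F^nz)-1$ is $\phi$-flat to all orders, using the identity $\sum_n\chi_0^{(k)}(t-n)=\delta_{k,0}$ against the polynomial blow-up of the derivatives of $\nu$. You correctly identify this as the crux and the mechanism you describe (the translation-invariance identity killing the leading, non-flat part of each derivative, leaving only flat corrections multiplied by polynomially growing factors) does work; it is only a sketch at higher orders, but the paper's own flatness argument is no more detailed at that level. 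Both versions deliver exactly what Corollary \ref{1-4} and Lemma \ref{regflat} need downstream.
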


 \begin{proof} Let $\nu:S_{\chi}\to\rr$ denote the function 
$$\nu:=\frac{\tau}\phi,$$
whose level curves  are  
 lines through the origin. The interval of values of the function $\nu$ on $S_{\chi}$ is 
 $M:=(-\chi,1+\chi)$. Fix a 
 \begin{equation} \sigma>0, \ 2\sigma<\frac12-\chi.\label{sichi}\end{equation} 
 Consider the covering of  the  interval $M$ by the intervals 
 $$(-\chi,\frac12+\sigma), \ \ (\frac12-\sigma,1+\chi)$$ 
 and a corresponding $C^\infty$-smooth partition of unity 
 $\rho_1$, $\rho_2$: $M\to\rr$, 
 \begin{equation}\rho_1\equiv1 \text{ on } (-\chi,\frac12-\sigma); \ \rho_2\equiv 1 \text{ on } 
 (\frac12+\sigma,1+\chi);\label{partun}\end{equation}
 $$ 
 \ \ \rho_1,\rho_2\geq0, \ \rho_1+\rho_2\equiv1 \text{ on } M=(-\chi, 1+\chi).$$
 Set 
 \begin{equation}\wt\phi(x):=\rho_1(\nu(x))\phi(x)+\rho_2(\nu(x))\phi\circ \wt F^{-1}(x)\label{wtphi}\end{equation}
 $$
 =\phi(x)+\rho_2(\nu(x))(\phi\circ\wt F^{-1}(x)-\phi(x)).$$
 \begin{proposition} \label{pinvf}  For every fixed $\chi\in(0,\frac12)$, $\sigma\in(0,\frac12(\frac12-\chi))$  and every $\eta$ small enough 
 (dependently on $\chi$ and $\sigma$) the function $\wt\phi$ given by (\ref{wtphi}) is well-defined 
 on $S_{\chi,\eta}$ and $\wt F$-invariant: if $x, \wt F(x)\in S_{\chi,\eta}$, 
 then $\wt\phi(\wt F(x))=\wt \phi(x)$. It is $C^{\infty}$-smooth, and the difference 
 $\wt\phi(x)-\phi(x)$ is  $\phi$-flat on $S_{\chi,\eta}$. 
 \end{proposition}
 \begin{proof} Recall that $\wt F$ satisfies asymptotic formula (\ref{tauphi}):
 $$\wt F(\tau,\phi)=(\tau+\phi+\flat(\phi), \phi+\flat(\phi)).$$
 Well-definedness and $C^{\infty}$-smoothness of the function 
 $\wt\phi$ on $S_{\chi,\eta}$ for small $\eta$ are obvious. Its $\phi$-flatness on 
 $S_{\chi,\eta}$ follows from formula (\ref{wtphi}), 
 $\phi$-flatness of the difference $\phi\circ\wt F-\phi$, see (\ref{tauphi}),  
 and the fact that the function $\nu(\tau,\phi)=\frac\tau\phi$ has partial derivatives 
 of at most polynomial growth in $\phi$, as $(\tau,\phi)\to0$ along the 
 sector $S_{\chi,\eta}$.  Let us prove $\wt F$-invariance, whenever 
 $\eta$ is small enough. 
 For every $\delta>0$ and  every $\eta>0$ small enough 
 (dependently on $\delta$)  the inclusion $x,\wt F(x)\in S_{\chi,\eta}$ implies 
 that $x\in\{-\chi\phi<\tau<(\chi+\delta)\phi\}$, see (\ref{diffo}). 
 Choosing $\delta<\sigma$, we get that the latter  sector lies in the sector 
 $\{-\chi\phi<\tau<(\frac12-\sigma)\phi\}$,  since $\chi+\delta<\chi+\sigma<\frac12-\sigma$, 
 see  (\ref{sichi}). Thus, on the latter sector $\rho_1\circ\nu\equiv1$ and $\rho_2\circ\nu\equiv0$, see 
  (\ref{partun}). Hence, $\wt\phi(x)=\phi(x)$,  by (\ref{wtphi}). Similarly applying the 
 above argument "in the inverse time" yields that the inclusion 
 $x,\wt F(x)\in S_{\chi,\eta}$ implies that 
 $\wt F(x)$ lies in the sector  $\{(1-\chi-\delta)\phi<\tau<(1+\chi)\phi\}$. The latter sector, and hence, 
 $\wt F(x)$ lie in the sector $\{(\frac12+\sigma)\phi<\tau<(1+\chi)\phi\}$,  since 
 $$1-\chi-\delta>1-\chi-\sigma=1-\chi+\sigma-2\sigma>1-\chi+\sigma-\frac12+\chi=\frac12+\sigma.$$
Therefore,  $\rho_2\circ\nu(\wt F(x))=1$,  
 by (\ref{partun}), and $\wt\phi(\wt F(x))=\phi\circ\wt F^{-1}(\wt F(x))=\phi(x)$, by (\ref{wtphi}). Finally we get that $\wt\phi(x)=\wt\phi\circ\wt F(x)$, and hence $\wt\phi$ is $\wt F$-invariant. 
The proposition is proved.
\end{proof} 

    Proposition \ref{pinvf}  immediately implies the statement of Proposition \ref{vii}.
  \end{proof}
  
  \subsection{Step 2. Extension by dynamics} 
 Here we show that an $\wt F$-invariant function $\wt\phi$ constructed above 
  on a neighborhood  of the fundamental domain $\Delta$ 
  extends along $\wt F$-orbits to an $\wt F$-invariant function on a domain $W$ adjacent to $J=(a,b)\times\{0\}\subset\rr^2_{\tau,\phi}$. The fact that it is $C^\infty$-smooth on $W\cup J$ and coincides with $\phi$ up to 
  $\phi$-flat terms will be proved in the next subsection. It suffices to 
  prove that the function $\wt\phi$ extends as above to a rectangle $(a',b')\times[0,\eta')$ 
  adjacent to arbitrary relatively compact subinterval  $J'=(a',b')\times\{0\}\Subset J$. 
A union of the above rectangles corresponding to an exhaustion of $J$ by a sequence 
  of subintervals $J'$ yields a  domain $W$ adjacent to all of $J$, where 
  the extended function is defined. Therefore, we make the following convention.
  \begin{convention} \label{coun} 
Everywhere below we  identify the interval $J=(a,b)\times\{0\}$ with $(a,b)$ 
and sometimes we denote $J=(a,b)\subset\rr$.  We  consider that $J$ is a finite interval: $a$, $b$ are finite.
 We will consider that 
there exists a $\delta>0$ such that 
$\wt F^{\pm1}$ are diffeomorphisms of the rectangle $J\times[0,\delta)\subset\rr^2_{\tau,\phi}$ onto its images, and the $\phi$-flat terms in  asymptotic formula (\ref{tauphi}) are {\it uniformly 
$\phi$-flat:}  the difference $\wt F(\tau,\phi)-(\tau+\phi,\phi)$ converges to zero 
uniformly in $\tau\in J$, and every its partial derivative 
 (of any order) also converges to zero uniformly, as $\phi\to0$. Indeed, the flat terms in question 
 are uniform on compact subsets in $J$. Hence, one can achieve their 
 uniformity   replacing $J$ by its relatively compact subinterval.
 Under this assumption the above difference and its 
 differential are both uniformly $o(\phi^m)$ in $\tau\in J$ for each individual $m\in\nn$.
 \end{convention}

The next proposition describes asymptotics of two-sided $\wt F$-orbits.

  \begin{proposition} \label{propit}
For every $\eta$ small enough and  $x:=(\tau_0,\phi_0)\in J\times[0,\eta)$ 
 
 a) the iterates $\wt F^j(x)=(\tau_j,\phi_j)$ are well defined for all $j\geq0$, $j\leq N_+$, where 
 $N_+=N_+(x)$ is the maximal number $j$ for which $\tau_j< b$; 
 
 b) the inverse iterates $\wt F^{-j}(x)=(\tau_{-j},\phi_{-j})$ are well-defined for all $j\leq N_{-}$ 
 where $N_-=N_-(x)$ is the maximal number $j$ for which $\tau_{-j}> a$;
 
c)  $\phi_j=\phi_0(1+o(1))$ uniformly in $\tau_0$ and $j\in[-N_-,N_+]$, as $\phi_0\to0$; 

d) the points $\tau_j$ form an asymptotic arithmetic progression: $\tau_{j+1}-\tau_j=
\phi_0(1+o(1))$ uniformly in $\tau_0\in J$ and in 
$j\in[-N_-,N_+-1]$, as $\phi_0\to0$. 
\end{proposition}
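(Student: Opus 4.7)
The plan is to prove Proposition \ref{propit} by a bootstrap argument exploiting the uniform $\phi$-flatness guaranteed by Convention \ref{coun}. Write
\begin{equation*}
\wt F(\tau,\phi)=(\tau+\phi+R_1(\tau,\phi),\ \phi+R_2(\tau,\phi)),
\end{equation*}
where, by Convention \ref{coun}, for every $m\in\nn$ there is $C_m>0$ with $|R_i(\tau,\phi)|+|\partial R_i(\tau,\phi)|\le C_m\phi^m$ uniformly in $\tau\in J=(a,b)$, $i=1,2$. I will treat the forward orbit first; the backward case is symmetric once one notes that $\wt F^{-1}$ has the form $(\tau,\phi)\mapsto(\tau-\phi+\wt R_1,\phi+\wt R_2)$ with $\wt R_i$ also uniformly $\phi$-flat, a consequence of the inverse function theorem and the form of $d\wt F(\tau,0)$.

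Fix $x=(\tau_0,\phi_0)\in J\times(0,\eta)$, denote $\wt F^j(x)=(\tau_j,\phi_j)$, and introduce the induction hypothesis $(\mathrm{IH})_k$: the iterates $\wt F^j(x)$ are defined and satisfy $\tau_j\in J$ and $|\phi_j-\phi_0|\le\phi_0^{3/2}$ for $0\le j\le k$. Assuming $(\mathrm{IH})_k$, the $\phi$-flat bound gives $\phi_{j+1}-\phi_j=R_2(\tau_j,\phi_j)$ with $|R_2|\le C_m(2\phi_0)^m$ for any $m$, while $\tau_{j+1}-\tau_j=\phi_j+R_1(\tau_j,\phi_j)\ge\phi_0/2-C_m\phi_0^m\ge\phi_0/4$ provided $\phi_0$ is small enough (independently of $\tau_0$). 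Consequently the number of iterates before $\tau_j$ exits $J$ on the right is at most $N_+\le 4(b-a)/\phi_0$. Summing the $\phi$-increments,
\begin{equation*}
|\phi_{k+1}-\phi_0|\ \le\ \sum_{j=0}^{k}|R_2(\tau_j,\phi_j)|\ \le\ \frac{4(b-a)}{\phi_0}\,C_m(2\phi_0)^m\ =\ O(\phi_0^{m-1}).
\end{equation*}
Taking $m=4$, this is $O(\phi_0^3)\ll\phi_0^{3/2}$ for $\phi_0$ small; hence $(\mathrm{IH})_{k+1}$ follows strictly, and the induction propagates up to $j=N_+$. This yields (a) and, by letting $m\to\infty$, the uniform estimate $\phi_j=\phi_0(1+o(1))$ for $0\le j\le N_+$, proving (c) in the forward direction. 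Claim (d) then follows immediately from $\tau_{j+1}-\tau_j=\phi_j+R_1(\tau_j,\phi_j)=\phi_0(1+o(1))$, again uniformly in $\tau_0$ and $j$.

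The backward iterates are handled identically using the form of $\wt F^{-1}$ noted above, with the analogous induction giving (b) and the negative-$j$ parts of (c), (d). The main subtlety is the circularity of the bootstrap: the bound on $N_+$ requires a priori control of $\phi_j$, while the control of $\phi_j$ requires a bound on the number of iterates. This is resolved by the sharpened induction hypothesis $|\phi_j-\phi_0|\le\phi_0^{3/2}$, which, as shown, is preserved strictly for $\phi_0$ small, so the set of $k$ for which $(\mathrm{IH})_k$ holds is both open and closed in $\{0,1,\dots,N_+\}$ and hence equal to the whole range. All estimates are uniform in $\tau_0\in J$ because the constants $C_m$ from Convention \ref{coun} are $\tau$-independent.
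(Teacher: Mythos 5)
Your proof is correct, but it takes a genuinely different route from the paper's. The paper resolves the same circularity (controlling $\phi_j$ requires a bound on the number of iterates, and vice versa) geometrically: it introduces the lines $L_{\pm}(x)=\{\phi=\phi_0\pm\phi_0^{4}(\tau-\tau_0)\}$ and shows that the narrow right (resp.\ left) sector they bound at $x$ is forward (resp.\ backward) invariant, because a single application of $\wt F$ displaces a point by $\phi\simeq\phi_0$ horizontally while its distance to $L_\pm$ is of order $\phi_0^{5}$, so a flat correction $o(\phi_0^m)$, $m\geq5$, cannot push the image across the line. Confinement to this sector simultaneously yields $\phi_j=\phi_0(1+O(\phi_0^{3}))$ and the fact that the orbit can only leave the rectangle through its right lateral side, which is exactly statement a). Your bootstrap with the sharpened hypothesis $|\phi_j-\phi_0|\le\phi_0^{3/2}$ extracts the same two quantitative facts (step size $\ge\phi_0/4$ in $\tau$, hence at most $O(1/\phi_0)$ iterates; cumulative $\phi$-drift $O(\phi_0^{m-1})$) by a direct induction, with no geometric construction; it is arguably more elementary and self-contained, while the paper's invariant cones give a cleaner global picture of where the whole orbit sits. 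One small point you should make explicit: Convention \ref{coun} asserts uniform $\phi$-flatness only for $\wt F$ itself, so the claim that $\wt F^{-1}(\tau',\phi')=(\tau'-\phi'+\flat(\phi'),\ \phi'+\flat(\phi'))$ with uniformly flat remainders needs the one-line verification that if $\phi'=\phi+R_2(\tau,\phi)$ then $\phi\asymp\phi'$, so a remainder that is $o(\phi^m)$ uniformly is also $o((\phi')^m)$ uniformly; the paper sidesteps this by running the sector argument symmetrically "in inverse time". With that line added, your argument is complete and delivers all four statements with the required uniformity in $\tau_0$.
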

\begin{proof}  Consider two lines and segments through $x$: 
$$L_{\pm}(x):=\{\phi=\phi_0\pm\phi_0^{4}(\tau-\tau_0)\}, \ \la_{\pm}:=L_{\pm}
\cap\left(J\times[0,2\eta)\right).$$

{\bf Claim 2.} {\it For   every $x=(\tau_0,\phi_0)\in J\times[0,2\eta)$ 
with $\phi_0$ small enough 

e) the image $\wt F(\la_{\pm})$ is disjoint from $\la_{\pm}$  and lies on its right; 

f) the image $\wt F^{-1}(\la_{\pm})$ is disjoint from $\la_{\pm}$  and lies on its left.

g) the right sector $S_{+}(x)$ bounded by the right subintervals in 
$\la_{\pm}$  with vertex $x$ is $\wt F$-invariant;

h) the left sector $S_{-}(x)$ bounded by the left subintervals in 
$\la_{\pm}$ with vertex $x$ is 
$\wt F^{-1}$-invariant.}

\begin{proof} If $\eta$ is small enough,  then $\wt F^{\pm1}$ are well-defined on 
$J\times[0,3\eta)$. If $\phi_0$ is small enough, then 
 each  $\la_{\pm}$ is projected to all of $J$, 
and the $\phi$-coordinates of all its points are uniformly asymptotically equivalent 
to $\phi_0$ (finiteness of $J$). The map $\wt F$ moves a point  $z=(\tau,\phi)\in\la_{\pm}$ to 
$y:=(\tau+\phi,\phi)$ up to a $\phi$-flat term, which is $o(\phi_0^m)$ for every $m\in\nn$.
 On the other hand, the distance 
of the latter point $y$ to the line $L_{\pm}$ is equal to $\phi\simeq\phi_0$ times the $|\sin|$ of the azimuth of the line $L_{\pm}$. The latter $|\sin|$ is asymptotic to $\phi_0^{4}$, 
and hence, is greater  than $\frac12\phi_0^{4}$, whenever $\phi_0<1$ is small enough. 
Thus, $\dist(y,L_{\pm})\geq \frac13\phi_0^5$. Therefore, adding a term $o(\phi_0^m)$, 
$m\geq5$, 
to $y$ will not allow  to cross $L_{\pm}$, 
and we will get a point lying on the same, right side from the line $L_{\pm}$, as $y$. 
The cases of lines $L_{\mp}$ and inverse iterates are treated analogously. Statements e) and f) are proved. 
They immediately imply statements g) and h).
\end{proof}

 Let  $\eta\in(0,\frac18)$ be small enough so that 
$\wt F$ is defined on the rectangle $\Pi:=J\times[0,3\eta)$
and for every $x\in\Pi$ with $\phi_0=\phi(x)\in[0,2\eta]$ 
the  sector $S_{+}(x)$ contains the points $x_j=\wt F^j(x)$ until they go out 
of $\Pi$ (Claim 2 g)).  The intersection $S_{+}(x)\cap\partial\Pi$ is contained 
in the right lateral side $\{ b\}\times[0,3\eta)$. Therefore, the first 
$j$ for which $x_j$ goes out of 
$\Pi$ is the one for which $\tau(x_j)\geq b$. This proves Statement a) of Proposition 
\ref{propit}. The proof of Statement b) is analogous. For 
 every $x\in\Pi$ with $\phi_0=\phi(x)$ small enough 
the above inclusion $x_j\in S_{+}(x)$ holds for $j=1,\dots,N_+$. It implies 
Statement c) for the above $j$, by the definition of the sector $S_{+}$. 
The proof of Statement c) for $j=-N_-,\dots,-1$ is analogous.  Statement d) follows from
Statement c), since $\tau\circ\wt F(x)-\tau(x)=\phi(x)+\flat(\phi(x))$, see (\ref{tauphi}). 
Proposition \ref{propit} is proved.
\end{proof}

\begin{corollary} \label{1-4}  1) For every $\eta>0$ small enough each point  $x=(\tau_0,\phi_0)\in J\times[0,\frac{2\eta}3)$ 
has two-sided orbit  lying in $J\times[0,\eta)$ and consisting of points $x_j$, 
$j\in[-N_-(x),N_+(x)]$, with $\phi_j\simeq\phi_0$, as $\phi_0\to0$; the latter asymptotics is uniform 
in the above $j$ and in $\tau_0\in J$.  

2) Let $\Delta$ denote the fundamental domain (curvilinear triangle) 
for the map $\wt F$ from 
Proposition \ref{vi}, Statement (iii). Let $\wh\Delta$ denote the complement 
of the closure $\overline\Delta$ to the union of its vertex $(0,0)$ and the opposite side. 
If $\eta>0$ is small enough, then the domain 
$W$ saturated by the above two-sided orbits of points in $\wh\Delta$ lies in 
$J\times[0,\frac{2\eta}3)$ and contains the strip $J\times(0,\frac\eta4)$. 

3) The orbit of each point in $W$ contains 
either a unique point lying in the fundamental domain $\Delta$, 
or two subsequent points lying in its lateral 
boundary curves (glued by $\wt F$). 

4) Each $\wt F$-invariant function $\wt\phi$ on  $\wh\Delta$ 
extends to a unique $\wt F$-invariant function  on   $W$ as a function constant 
along the latter orbits.
\end{corollary}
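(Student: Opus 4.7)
The plan is to derive all four statements directly from Proposition \ref{propit} by a geometric analysis of the fundamental domain $\Delta$. The argument is essentially bookkeeping, with the only mild subtlety lying in the second half of Statement 2 (showing that the saturated domain $W$ contains the strip $J\times(0,\frac\eta4)$).

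For Statement 1, I would simply invoke Proposition \ref{propit}(c): since $\phi_j=\phi_0(1+o(1))$ uniformly in $\tau_0\in J$ and $j\in[-N_-,N_+]$, shrinking $\eta$ makes the $o(1)$ factor uniformly smaller than $\frac12$, so the assumption $\phi_0<\frac{2\eta}3$ forces $\phi_j<\eta$ throughout the maximal two-sided orbit.

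For Statement 2, I would first verify that $\wh\Delta$ itself lies in $J\times[0,\frac{2\eta}3)$. The vertices of $\Delta$ from Proposition \ref{vi}(iii) are asymptotically $(0,0)$, $(0,\frac\eta2)$ and $\wt F(0,\frac\eta2)=(\frac\eta2+\flat(\eta),\frac\eta2+\flat(\eta))$ by (\ref{tauphi}), so the maximal $\phi$-coordinate on $\overline\Delta$ is $\frac\eta2(1+o(1))<\frac{2\eta}3$ for $\eta$ small; combining with Statement 1 gives $W\subset J\times[0,\frac{2\eta}3)$. The reverse inclusion $W\supset J\times(0,\frac\eta4)$ is the main step: given $x=(\tau_0,\phi_0)$ with $0<\phi_0<\frac\eta4$ and $\tau_0\in J$, Proposition \ref{propit}(d) says the $\tau$-coordinates of iterates form an asymptotic arithmetic progression with step $\phi_0(1+o(1))$. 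Since $J$ is a bounded interval (Convention \ref{coun}) and $N_\pm(x)$ are by definition the extremal indices keeping $\tau_{\pm j}$ inside $J$, this progression sweeps across all of $J$; in particular some $\tau_j$ falls in $[0,\phi_0]$. Combined with $\phi_j\simeq\phi_0$, this places $x_j$ in the horizontal slice of $\Delta$ at height $\phi_j$, which is essentially the interval $[0,\phi_j]\times\{\phi_j\}$; hence $x_j\in\wh\Delta$. The possible obstacle here is confirming that such an index $j$ actually lies within $[-N_-,N_+]$, which I will settle by using that $0\in J$ and that the step $\phi_0$ is the same order as the target window's length.

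Statement 3 is the standard structural property of a fundamental domain: by Proposition \ref{vi}(iii), $\wt F$ glues $K$ to $\wt F(K)$, so an $\wt F$-orbit crossing $W$ either meets the interior of $\Delta$ at a single iterate (giving a unique representative in $\wh\Delta$) or meets $K$ at some $x_k$, in which case the next iterate $x_{k+1}=\wt F(x_k)\in\wt F(K)$ also lies in $\wh\Delta$ and is glued to $x_k$. Statement 4 is then immediate: any $\wt F$-invariant function $\wt\phi$ on $\wh\Delta$ is automatically consistent across this gluing, since $\wt\phi(x_k)=\wt\phi(x_{k+1})$ by invariance wherever both iterates lie in $\wh\Delta$; extending $\wt\phi$ to $W$ by declaring it constant along $\wt F$-orbits therefore yields a well-defined $\wt F$-invariant function on all of $W$, and uniqueness is built into the prescription.
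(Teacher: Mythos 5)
Your overall route coincides with the paper's (which dispatches this corollary in a single line, ``follows immediately from Proposition \ref{propit}''), and Statements 1, 3 and 4 are handled correctly: Statement 1 really is immediate from Proposition \ref{propit}(c), and 3--4 are standard fundamental-domain bookkeeping once one knows every orbit in $W$ meets $\overline\Delta$. The genuine gap is in the one step that carries real content, the inclusion $W\supset J\times(0,\frac\eta4)$ in Statement 2. You argue that since the $\tau_j$ form an asymptotic arithmetic progression with step $\phi_0(1+o(1))$ sweeping across $J$, ``some $\tau_j$ falls in $[0,\phi_0]$'', hence in the horizontal slice of $\Delta$ at height $\phi_j$. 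But that slice has width equal to the step only up to uncontrolled small corrections: the right edge of $\Delta$ at height $\phi$ sits at $\tau=\phi+\flat(\phi)$ (it is the curve $\wt F(K)$), while $\tau_{j+1}-\tau_j=\phi_j+\flat(\phi_j)$ with a different flat correction. A progression whose step matches the target window's length only up to such errors can straddle the window: if $\tau_j=-\delta$ with $\delta$ smaller than the discrepancy between step and window, $\tau_{j+1}$ may land just beyond the right edge. So the asymptotics of Proposition \ref{propit} alone do not force a landing, and the consideration you offer to ``settle'' the difficulty --- that the step is of the same order as the window --- is precisely the borderline situation that creates the problem rather than the fact that resolves it.

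The clean argument is exact, not asymptotic. Take the largest $j$ with $\tau_j<0$; it exists because the progression crosses $\tau=0$ (recall $a<0<b$). Then $\tau_{j+1}\ge0$, so $x_{j+1}$ lies on or to the right of $K$; and since $\wt F$ is an orientation-preserving homeomorphism near $J$ taking $K$ onto $\wt F(K)$, it maps the region strictly to the left of $K$ into the region strictly to the left of $\wt F(K)$. Hence $x_{j+1}=\wt F(x_j)\in\overline\Delta\setminus\wt F(K)$, and since $\phi_{j+1}=\phi_0(1+o(1))<\frac\eta2$ it is neither on the top side nor at the vertex, i.e.\ $x_{j+1}\in\wh\Delta$. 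The same exact observation is what makes your Statement 3 airtight: an orbit meets $\Int\Delta$ at most once because $\wt F(\Int\Delta)$ lies strictly to the right of $\wt F(K)$ and $\wt F^2(S_{\chi,2\eta})\cap S_{\chi,2\eta}=\emptyset$ by Proposition \ref{vi}(ii). The rest of your write-up --- the bound $\frac\eta2(1+o(1))<\frac{2\eta}3$ for the height of $\overline\Delta$, and the consistency of the extension across the gluing $K\to\wt F(K)$ in Statement 4 --- is fine.
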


The corollary follows immediately from Proposition \ref{propit}. Step 2 is done.

\subsection{Step 3. Regularity and flatness. End of proof of  Theorem \ref{thm33}, Statement 1)} 
Here we will prove the following lemma, which will imply Statement 1) of Theorem \ref{thm33}.
\begin{lemma} \label{regflat} Let in Corollary \ref{1-4} the function $\wt\phi$ on $\wh\Delta$ 
be the restriction to $\wh\Delta$ of a $C^{\infty}$-smooth $\wt F$-invariant function defined 
on a neighborhood of  $\wh\Delta$. Let the function 
$\wt\phi(\tau,\phi)-\phi$ be flat on $\wh\Delta$: it tends to zero with all its partial derivatives, as $(\tau,\phi)\in \wh\Delta$ tends to zero. Consider its extension to the above 
domain $W$ from Corollary \ref{1-4}, Statement 4), and let us denote the 
extended function by the same symbol 
$\wt\phi$.  The difference $\wt\phi(\tau,\phi)-\phi$  is $C^{\infty}$-smooth on 
$W\cup J$,  and it is uniformly $\phi$-flat (see Convention \ref{coun}). 
\end{lemma}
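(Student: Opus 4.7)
The plan is to represent $\wt\phi$ locally as a finite composition $\wt\phi|_{\wh\Delta}\circ \wt F^{\,n}$ and then, by uniform estimates on long iterates of $\wt F$, to transfer the known $\phi$-flatness on $\wh\Delta$ back to the whole of $W$.

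First I would settle $C^{\infty}$-smoothness on the interior of $W$. By Corollary \ref{1-4}, for every $x\in W$ there is a locally constant in $x$ integer $n=n(x)$ with $\wt F^{\,n}(x)\in\overline\Delta$; at points where two consecutive iterates $\wt F^{\,n},\wt F^{\,n+1}$ land one on each glued side of $\partial\Delta$, the two candidate values agree by $\wt F$-invariance of $\wt\phi|_{\wh\Delta}$. Since $\wt\phi|_{\wh\Delta}$ is the restriction of a $C^{\infty}$ function on a neighborhood and $\wt F^{\,n}$ is $C^{\infty}$, the local formula $\wt\phi=\wt\phi|_{\wh\Delta}\circ\wt F^{\,n}$ defines a $C^{\infty}$ function near every $x\in W$, and these local definitions patch consistently, giving $\wt\phi\in C^{\infty}(W)$.

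Next I would establish uniform iterate estimates. Write $\wt F=T_0+\Phi$ with $T_0(\tau,\phi)=(\tau+\phi,\phi)$ and $\Phi=(\flat(\phi),\flat(\phi))$ uniformly in $\tau\in J$ (Convention \ref{coun}), so that $T_0^{\,n}(\tau,\phi)=(\tau+n\phi,\phi)$. A telescoping and induction argument based on the chain rule yields, for each multi-index $\alpha$, a polynomial $P_\alpha$ such that
\begin{equation*}
|\partial^{\alpha}(\wt F^{\,n}-T_0^{\,n})(\tau,\phi)|\leq P_\alpha(|n|)\,\flat(\phi),\qquad |\partial^{\alpha}\wt F^{\,n}(\tau,\phi)|\leq P_\alpha(|n|),
\end{equation*}
uniformly for $\tau\in J$ and $\phi\in(0,\eta)$. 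By Proposition \ref{propit}(d), the number of iterates needed to drive any $x\in W$ into $\wh\Delta$ satisfies $|n(x)|\leq C/\phi_0$; hence $P_\alpha(|n|)\leq C_\alpha\phi_0^{-N_\alpha}$, which is harmless since $\flat(\phi_0)=o(\phi_0^m)$ for every $m\in\nn$, so $P_\alpha(|n|)\flat(\phi_0)$ remains $\phi_0$-flat.

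Finally I would prove uniform $\phi$-flatness of $\wt\phi-\phi$ on $W$. By $\wt F$-invariance, for $x\in W$ and $y:=\wt F^{\,n(x)}(x)\in\wh\Delta$,
\begin{equation*}
\wt\phi(x)-\phi(x)=\bigl((\wt\phi-\phi)|_{\wh\Delta}\bigr)(y)+\bigl(\phi(y)-\phi(x)\bigr).
\end{equation*}
The first term is the composition of the $\phi$-flat function supplied by Proposition \ref{vii} with $\wt F^{\,n}$; the iterate estimates together with Fa\`a di Bruno's formula show that this composition and each of its partial derivatives are $\phi_0$-flat uniformly in $\tau_0\in J$. The second term is the $\phi$-increment of the orbit, bounded by the second component of the first displayed estimate above, hence also $\phi_0$-flat with all its derivatives. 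Summing the two, $\wt\phi-\phi$ and each of its partial derivatives tends to zero uniformly as $\phi\to 0$, so $\wt\phi$ extends $C^{\infty}$-smoothly to $W\cup J$ and $\wt\phi-\phi$ is uniformly $\phi$-flat.

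The key obstacle is the iterate estimate in the second paragraph: the linear model $dT_0^{\,n}$ already carries an entry equal to $n\sim 1/\phi_0$, so chain-rule expansions of $d\wt F^{\,n}$ blow up polynomially in $1/\phi_0$, and higher-order differentiation multiplies these factors. Flatness on $\wh\Delta$ survives this growth only because the perturbation $\Phi$ is \emph{infinitely} flat in $\phi$, so that $\phi^{-N}\flat(\phi)=\flat(\phi)$ for every $N$.
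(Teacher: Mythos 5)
Your proposal is correct in substance and follows the same overall strategy as the paper: extend $\wt\phi$ along orbits, observe that the number of iterates needed to reach the fundamental domain is $N=O(1/\phi_0)$, show that the derivatives of $\wt F^{\,N}$ grow only polynomially in $N$, and conclude that this polynomial growth is absorbed by the infinite flatness of the data on $\wh\Delta$. You also correctly isolate the one genuinely delicate point, namely that $\phi^{-N}\flat(\phi)=\flat(\phi)$ is what saves the argument. The difference is in how the polynomial bounds on $\partial^{\alpha}\wt F^{\,N}$ are obtained. You assert them via a ``telescoping and induction argument''; the paper instead conjugates the Jacobians $d\wt F(x_j)=A+\flat(\phi_0)$ by $\diag(1,\phi_0^{-1})$, which turns $A$ into a matrix $B_k$ with $B_k^{N_k}$ bounded, and then runs a flowbox argument in $\gl_2(\rr)$ (and in $\gl(\mcp_{\leq\ell})$ for higher jets) to get the sharp asymptotics $d\wt F^{\,N_k}=\left(\begin{smallmatrix}1&N_k\\0&1\end{smallmatrix}\right)+o(\phi_{0k}^m)$, from which $(0,1)\,d\wt F^{\,N_k}=(0,1)+o(\phi_{0k}^{m-1})$ follows because $(0,1)$ is fixed by right multiplication by $A^{N_k}$. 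One caution on your route: the naive induction $\wt F^{\,n}=\wt F\circ\wt F^{\,n-1}$ gives a recursion $P(n)\leq \|A\|\,P(n-1)+\flat$, which is exponential since $\|A\|>1$; you must use the summed telescoping identity $\wt F^{\,n}-T_0^{\,n}=\sum_{j}T_0^{\,n-1-j}\circ(\wt F-T_0)\circ\wt F^{\,j}$ (and its analogue for the expanded product of Jacobians), where each factor $A^{k}$ contributes only $O(n)$, so that the total is polynomial. With that understood, your Fa\`a di Bruno bookkeeping for higher derivatives is a legitimate, if more computational, substitute for the paper's jet-space representation $\rho(A^{N})$, and your splitting $\wt\phi-\phi=(\wt\phi-\phi)\circ\wt F^{\,N}+(\phi\circ\wt F^{\,N}-\phi)$ correctly replaces the paper's use of the invariance of the covector $(0,1)$.
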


\begin{proof} For every point $x=(\tau,\phi)\in W$ there exists a $N=N(x)\in\zz$ 
such that $\wt F^N(x)\in\wh\Delta$. The latter image $\wt F^N(x)$ lies in the definition domain 
of the initial function $\wt\phi$ (which is defined on a neihborhood of $\wh\Delta$), and 
$\wt\phi(x)=\wt\phi_N(x):=\wt\phi(\wt F^N(x))$, by definition. This immediately implies 
$C^{\infty}$-smoothness of the extended function $\wt\phi$ on $W$. Let us prove its 
$\phi$-flatness. This will automatically imply $C^{\infty}$-smoothness at points of 
the boundary interval $J$. To do this, we use the asymptotics
\begin{equation} d\wt F(\tau,\phi)=A+\flat(\phi), \ A=\left(\begin{matrix} 1 & 1\\ 0 & 1
\end{matrix}\right);\label{dwtf}\end{equation}
\begin{equation} N(x)=N(\tau,\phi)=O\left(\frac1\phi\right).\label{asnx}\end{equation}
Here the flat term in (\ref{dwtf}) is uniformly flat, see Convention \ref{coun}.
Formula (\ref{dwtf})   follows from (\ref{tauphi}). Formula (\ref{asnx}) holds, since $N\leq  
N_++N_-=O(\frac1\phi)$, 
which follows from 
Proposition \ref{propit}, Statement d). 

We study the derivatives of the functions $\wt\phi_N-\phi$, $N=N(x)$, at the point $x=(\tau,\phi)$,  as  functions in $x$ with fixed $N$ chosen as above for this concrete $x$. 
To prove uniform flatness, we have to show that all its partial 
derivatives  tend to zero uniformly in $\tau\in J$, as $\phi\to0$. 
We prove this statement for the first derivatives (step 1) and then for the higher derivatives (step 2). 

Without loss of generality everywhere below we consider that $N\geq1$, i.e., 
$x$ lies on the left from the sector $\Delta$: for negative 
$N$ the proof is analogous.

Step 1: the first derivatives. The initial  function $\wt\phi$ defined on a neighborhood 
of the set $\wh\Delta$ is already known to be $\phi$-flat on $\wh\Delta$. The differential  
of the composition $\wt\phi_N=\wt\phi\circ\wt F^N$ at the point $x$, $N=N(x)$, is equal to 
\begin{equation} d(\wt\phi\circ\wt F^N)(x)=d\wt\phi(\wt F^N(x))d\wt F(\wt F^{N-1}(x))\dots d\wt F(x).
\label{diffwt}\end{equation}
 
\begin{proposition} \label{diffto0} For every sequence 
of points $x(k)=(\tau_{0k},\phi_{0k})\in W$ with $\phi_{0k}\to0$, as $k\to\infty$, 
 and  numbers $N_k=N(x(k))\in\nn$ with $\wt F^{N_k}(x(k))\in\wh\Delta$  
 the difference  $d(\wt\phi\circ\wt F^{N_k})(x(k))-d\phi$ tends to zero, as $k\to\infty$.
\end{proposition}
Proposition \ref{diffto0} implies uniform convergence to zero of the first derivatives. 

In its proof (given below) 
 we use the following asymptotics of differential $d\wt F(\wt F^j(x))$ and technical proposition on matrix products. We denote 
 $$M(\tau,\phi):= \text{ the Jacobian matrix of the 
differential } d\wt F(\tau,\phi).$$

\begin{proposition} \label{propxj} Let $x=(\tau_0,\phi_0)\in J\times(0,\frac\eta4)$, 
$x_j=(\tau_j,\phi_j):=
\wt F^j(x)$, $j=0,\dots,N(x)$. For every $m\in\nn$ one has 
 \begin{equation} M(\tau_j,\phi_j)=A+o(\phi_0^m), \ \text{ as } \phi_0\to0; \ A=\left(\begin{matrix} 1 & 1\\ 0 & 1\end{matrix}\right),\label{mjun}\end{equation}
uniformly in $j=1,\dots,N(x)$ and in $\tau_0\in J$ for each individual $m$.
\end{proposition}

\begin{proof} Formula (\ref{mjun}) follows from (\ref{dwtf}) and Proposition \ref{propit}, part  c). 
\end{proof}
 
\begin{proposition} \label{propasm} 
Consider  arbitrary sequences of numbers $\phi_{0k}>0$, $N_k\in\nn$, 
$\phi_{0k}\to0$, $N_k=O(\frac1{\phi_{0k}})$, as $k\to\infty$,  and matrix collections 
$$\mathcal M_k=(M_{1;k},\dots,M_{N_k;k}), \ M_{j;k}\in\gl_2(\rr),$$
\begin{equation} M_{j;k}=A+o(\phi_{0k}^m) \ \text{ for every } \ m\in\nn; \ \ 
A=\left(\begin{matrix} 1 & 1\\ 0 & 1
\end{matrix}\right).\label{mtph}\end{equation}
Here the latter asymptotics is uniform in $j=1,\dots,N_k$ for each individual $m$, as 
$k\to\infty$.  Then the products of the  matrices $M_{j;k}$ have  
the asymptotics  
\begin{equation} \wh M_k:=M_{N_k;k}\dots M_{1;k}
=\left(\begin{matrix} 1 & N_k\\ 0 & 1\end{matrix}\right) + o(\phi_{0k}^m) \ \text{  for every  } \ 
m\in\nn.\label{asmat}
\end{equation}
\end{proposition}
\begin{proof} Conjugation by the diagonal matrix $H_k:=\diag(1,\phi_{0k}^{-1})$ transforms 
the matrices $M_{j;k}$ and their product  respectively to the following matrices:  
$$\wt M_{j;k}=B_k+o(\phi_{0k}^m), 
 \ B_k=\left(\begin{matrix} 1 & \phi_{0k}\\ 0 & 1
\end{matrix}\right); \ \wt M_k:=\wt M_{N_k;k}\dots \wt M_{1;k}.$$

{\bf Claim 3.} {\it One has}
\begin{equation}\wt M_k=B_k^{N_k}+o(\phi_{0k}^m)=\left(\begin{matrix} 1 & N_k\phi_{0k}\\ 0 & 1
\end{matrix}\right)+o(\phi_{0k}^m).\label{renpro}\end{equation}

\begin{proof} Without loss of generality we can and will consider that $N_k\phi_{0k}\to C\in\rr_{\geq0}$, 
passing to a subsequence, since $N_k=O(\frac1{\phi_{0k}})$, by assumption. 
Let $\ut\subset\gl_2(\rr)$ denote the one-parametric 
subgroup of  unipotent upper triangular matrices. 
Consider the tangent vector 
$$V=\left(\begin{matrix} 0 & 1\\ 0 & 0
\end{matrix}\right)\in T_1\ut\subset T_1\gl_2(\rr).$$
Let us extend it to a left-invariant vector field on $\gl_2(\rr)$, which 
is tangent to the $\ut$-orbits under right multiplication action. Take a small 
transverse section $S\subset\gl_2(\rr)$ passing through the identity and consider 
the subset $\mathcal W\subset\gl_2(\rr)$ foliated by arcs of phase curves of the field $V$ 
starting in $S$ and parametrized by time segment $[0,2C]$. The subset $\mathcal W$ is a 
bordered domain (flowbox) diffeomorphic 
to the product $S\times[0,2C]$ via the diffeomorphism  sending a point  $y\in\mathcal W$ to the 
pair $(s(y),t(y))$ such that the orbit issued from the point $s(y)\in S$ arrives to $y$ in time 
$t(y)$. Fix an arbitrary $m\geq3$. 
In the new chart $(s,t)$ the multiplication by a matrix $\wt M_{j;k}=B_k+o(\phi_{0k}^m)$ 
from the right moves a point $(s,t)$ to the point $(s,t+\phi_{0k})$ up to a small correction of 
order $o(\phi_{0k}^m)$.  Therefore, the multiplication by $N_k\simeq\frac C{\phi_{0k}}$ 
similar matrices $\wt M_{j;k}$ with the $o(\phi_{0k}^m)$ in their asymptotics being uniform in $j$ 
 moves a point $(s,t)$ to a point $(s,t+N_k\phi_{0k})$ up to a 
correction of order $N_ko(\phi_{0k}^m)=o(\phi_{0k}^{m-1})$. This implies 
(\ref{renpro}) with $m$ replaced by $m-1$. Taking into account that $m$ can be choosen 
arbitrary, this proves (\ref{renpro}). 
\end{proof}

Conjugating formula (\ref{renpro})  by the matrix $H^{-1}_k$ and taking into account that 
$m\in\nn$ is arbitrary yields (\ref{asmat}). This proves Proposition \ref{propasm}.
\end{proof}

\begin{proof} {\bf of Proposition \ref{diffto0}.}  For $z\in\wh\Delta$ set 
$$St(z):=(\frac{\partial\wt\phi}{\partial\tau},\frac{\partial\wt\phi}{\partial\phi})(z).$$
The string of the first partial derivatives of the function 
$\wt\phi_N=\wt\phi\circ\wt F^N(x)$, $N=N(x)$, is equal to the product  
$$St(\tau_N,\phi_N)M(\tau_{N-1},\phi_{N-1})\dots M(\tau_0,\phi_0), \ (\tau_j,\phi_j)=\wt F^{j}(x), \ j=0,\dots,N,$$  
 \begin{equation}St(\tau_N,\phi_N)=\left(0,1\right)+o(\phi_0^m) \text{ for every } m\in\nn,\label{stringas}
 \end{equation}
  by $\phi$-flatness of the initial function $\wt\phi$ on $\wh\Delta$ and by the uniform asymptotics  $\phi_j=\phi_0(1+o(1))$, $j=1,\dots,N$  
  (Proposition  \ref{propit}, Statement c)). 
  
 Take arbitrary sequence of points 
 $x(k):=(\tau_{0k},\phi_{0k})$, $\tau_{0k}\in J$, $\phi_{0k}\to0$, as $k\to\infty$. Set  
  $$(\tau_{jk},\phi_{jk}):=\wt F^j(x(k)), \ N_k:=N(x(k)).$$
  The sequence of collections of Jacobian matrices  $M_{j+1;k}:=M(\tau_{jk},\phi_{jk})$, $j=0,\dots,N_k-1$, 
 satisfy the conditions of Proposition \ref{propasm}, by (\ref{asnx}) and (\ref{mjun}). Therefore, their product $\wh M_k$, which is the 
 Jacobian matrix of the differential $d\wt F^{N_k}(x(k))$, 
  has asymptotics (\ref{asmat}):
 \begin{equation}\wh M_k:= \text{ the Jacobian matrix of } d\wt F^{N_k}(x(k)) \ = \ 
\left(\begin{matrix} 1 & N_k\\ 0 & 1\end{matrix}\right) + o(\phi_{0k}^m).
\label{jacas}\end{equation}
  Thus, the matrix-string of the differential $d\wt\phi_{N_k}(\tau_{0k},\phi_{0k})$ 
 is the product  
 $$St(\tau_{Nk},\phi_{Nk})\wh M_k=\left(\left(0,1\right)+o(\phi_{0k}^m)\right)\left(\begin{matrix} 1 & N_k\\ 
 0 & 1
\end{matrix}\right)+o(\phi_{0k}^m)=\left(0,1\right)+o(\phi_{0k}^{m-1}),$$
since $N_k=O(\frac1{\phi_{0k}})$, see (\ref{asnx}). 
For $m=2$ we get that the differential  $d(\wt\phi_{N_k}(\tau,\phi)-\phi)$ taken at the point $x(k)$ tends to zero, as $k\to\infty$.  This proves Proposition \ref{diffto0}.
\end{proof}

Step 2: the higher derivatives. For a smooth function $f$ defined on a neighborhood of a point $x$ 
by $j^\ell_x(f)$ we will denote its $\ell$-jet at $x$. Below we 
 prove the following proposition.
 \begin{proposition} \label{difftol} In the conditions of Proposition \ref{diffto0} for every 
 $\ell\in\nn$ the $\ell$-jet at $x(k)$ of the difference $\wt\phi\circ\wt F^{N_k}-\phi$ 
 tends to zero, as $k\to\infty$.
 \end{proposition}
Proposition \ref{difftol} will imply $C^{\infty}$-smoothness and $\phi$-flatness 
of the extended function $\wt\phi$ 
at the points of the boundary interval $J\times\{0\}$. 

For every $\ell\in\nn$ and $x\in\rr^2$ 
 let $J^\ell_x$ denote the space of $\ell$-jets of functions at the point $x$.  
 The map $\wt F$ 
induces a transformation of functions, $g\mapsto g\circ\wt F$. This induces  
 linear operators in the jet spaces, $D_\ell\wt F(x):J^\ell_{\wt F(x)}\to J ^\ell_{x}$. 
We  identify the space of $\ell$-jets at  each point in $\rr^2$ with the $\ell$-jet space 
at the origin, which in its turn is identified with the 
space $\mcp_{\leq\ell}$ of polynomials in two variables of degrees no greater than $\ell$. 
Thus, we consider  the operator 
$D_\ell\wt F(x)$ as acting on the above space $\mcp_{\leq\ell}$. One has 
\begin{equation} D_\ell\wt F^N(x)=D_\ell\wt F(x)\dots D_\ell\wt F(F^{N-1}(x)).\label{dlf}
\end{equation}
Linear  changes of variables $(\tau,\phi)$ act on the space $\mcp_{\leq\ell}$ and 
induce an injective linear anti-representation $\rho:\gl_2(\rr)\to\gl(\mcp_{\leq\ell})$. Let 
$A$ denote the unipotent Jordan cell, see (\ref{mtph}). 
\begin{proposition} \label{propseq}
 For every sequence of points $x(k)=(\tau_{0k},\phi_{0k})\in W$ 
with $\phi_{0k}\to0$, as $k\to\infty$, set $N_k:=N(x(k))$, one has 
\begin{equation}D_\ell\wt F^{N_k}(x(k))=\rho(A^{N_k})+o(\phi_{0k}^m) \ \text{ for every } 
m\in\nn.\label{delwt}\end{equation}
\end{proposition}
\begin{proof} One has 
\begin{equation} D_\ell\wt F(\tau,\phi)=\rho(A)+\flat(\phi), \label{delfl}\end{equation}
by (\ref{dwtf}). Set $x_j(k)=(\tau_{jk},\phi_{jk})=\wt F^j(x(k))$, $j=0,\dots, N_k-1$. One has 
\begin{equation} D_\ell\wt F(x_j(k))=\rho(A)+o(\phi_{0k}^m) \text{ for every  } m\in\nn,
\label{delflj}\end{equation}
by (\ref{delfl}) and Proposition \ref{propit}, Statement c). We use (\ref{dlf}) and the 
following multidimensional version of Proposition \ref{propasm}.

\begin{proposition} \label{propasm2} 
Consider  arbitrary sequences of numbers $\phi_{0k}>0$,  $N_k\in\nn$, 
$\phi_{0k}\to0$, $N_k=O(\frac1{\phi_{0k}})$, as $k\to\infty$, and matrix collections 
$$\mathcal M_k=(M_{1;k},\dots,M_{N_k;k}), \ M_{j;k}\in\gl(\mcp_{\leq\ell}),$$
\begin{equation} M_{j;k}=\rho(A)+o(\phi_{0k}^m) \text{ for every } m\in\nn; \ 
A=\left(\begin{matrix} 1 & 1\\ 0 & 1
\end{matrix}\right).\label{mtph2}\end{equation}
Here the latter asymptotics is uniform in $j=1,\dots,N_k$ for each individual $m$, as 
$k\to\infty$.  Then the product of the  matrices $M_{j;k}$ has 
the asymptotics  
\begin{equation} \wh M_k:=M_{N_k;k}\dots M_{1;k}
=\rho(A^{N_k}) + o(\phi_{0k}^m) \text{ for every  } 
m\in\nn.\label{asmat2}
\end{equation}
\end{proposition} 
\begin{proof}
Conjugating the matrices $M_{j;k}$ by $\rho(H_k)$, $H_k:=\diag(1,\phi_{0k}^{-1})$, 
transforms them to matrices 
$$\wt M_{j;k}=\rho(B_k)+o(\phi_{0k}^{m'}), \ 
 \ B_k=\left(\begin{matrix} 1 & \phi_{0k}\\ 0 & 1
\end{matrix}\right), \ m'=m-\ell-1.$$

It suffices to show that the product of the matrices $\wt M_{j;k}$ has asymptotics 
$\rho(B_k^{N_k})+o(\phi_{0k}^m)$ for every  $m\in\nn$, as in  Claim 3. 
This is done by applying the arguments from the proof of Claim 3
to the left-invariant vector field on $\gl(\mcp_{\leq\ell})$ whose time $t$ flow map acts by 
 right multiplication by $\rho(A^t)$. 
\end{proof}

Formula (\ref{delwt}) is deduced from Proposition \ref{propasm2} and formulas 
(\ref{dlf}), (\ref{delflj}), as formula (\ref{jacas}).
\end{proof}

\begin{proof} {\bf of Proposition \ref{difftol}.} The polynomial representing 
the $\ell$-jet of the initial function 
$\wt\phi$ at a point $z\in\wh\Delta$ tends to the linear polynomial $P(\tau,\phi)=\phi$, 
as $z\to0$, so that its distance to $P(\tau,\phi)$ is $o(\phi^m)$ for every $m\in\nn$, by flatness of $\wt\phi$ on $\wh\Delta$. 
This together with Proposition \ref{propit}, Statement c) implies that the distance 
of its $\ell$-jet at the point $\wt F^{N_k}(x(k))$ to the polynomial $\phi$ 
is asymptotic to $o(\phi_{0k}^m)$. The image of the latter $\ell$-jet 
under the operator $D_\ell\wt F^{N_k}(x(k))$ is also $o(\phi_{0k}^m)$-close to $\phi$ 
for every $m\in\nn$. This follows from the previous statement, formula (\ref{delwt}), 
the fact that $\rho(A)$ fixes $\phi$ and the asymptotics $N_k=O(\phi_{0k}^{-1})$. 
Finally we get that the difference of 
the $\ell$-jet of the function $\phi$ at $x(k)$ and 
the  $\ell$-jet  $j^\ell_{x(k)}(\wt\phi\circ\wt F^{N_k})$ of the 
extended function tends to zero, as $k\to\infty$. 
Proposition \ref{difftol} is proved. 
\end{proof}

Lemma \ref{regflat} follows from Proposition \ref{difftol}. It implies 
Statement 1) of  Theorem \ref{thm33}.
\end{proof}
\subsection{Normal form. Proof of Statement 2) of Theorem \ref{thm33}}
Let $\wt\phi$ be a function from Statement 1) of Theorem \ref{thm33}. The 
 vector function $(\tau,\wt \phi)$ has non-degenerate Jacobian matrix at $J$. 
 Hence, shrinking $W$, we can and will consider that $(\tau,\wt\phi)$ are $C^{\infty}$-smooth 
 coordinates on $W\cup J$. In these coordinates 
 \begin{equation}\wt F: (\tau,\wt \phi)\mapsto(\tau+g(\tau,\wt\phi),\wt\phi), 
  \ \ g(\tau,\wt\phi)=\wt\phi+\flat(\wt\phi).\label{tph1}
\end{equation}
{\bf Claim.} {\it Shrinking $W$, one can achieve that there exists a $C^{\infty}$-smooth function 
$\wt\tau(\tau,\wt\phi)=\tau+\flat(\wt\phi)$ on $W\cup J$ such that $(\wt\tau,\wt\phi)$ are $C^{\infty}$-smooth 
coordinates on $W\cup J$ in which $\wt F$ acts as in (\ref{tauphiob}):}
\begin{equation}\wt F: (\wt \tau,\wt \phi)\mapsto(\wt\tau+\wt\phi,\wt\phi).\label{tauphiob2}
\end{equation}

\medskip

\begin{proof} Statement (\ref{tauphiob2}) is equivalent to the equation 
  \begin{equation}\wt\tau\circ F(x)=\wt\tau(x)+\wt\phi(x).\label{tauc}\end{equation}
 Shifting $\tau$ and shrinking $W$, we can and will consider that $(0,0)\in J$, 
 \begin{equation}g|_W>0, \ W\cap\{\wt\phi=\zeta\}=(\alpha(\zeta),\beta(\zeta))\times\{\zeta\} \ \text{ for all small } 
 \zeta>0,\label{gwf}\end{equation} 
 $$\alpha(\zeta)\to a, \ \beta(\zeta)\to b,\ \text{ as } \zeta\to0;  \ \ \wt F^{\pm1} \ \text{ are well-defined on } \ W\cup J.$$
  Fix a small $\chi\in(0,\frac12)$ and a $\eta>0$ (dependently on $\chi$) satisfying the statements of Proposition \ref{vi} 
  and such that the second statement (set equality) in (\ref{gwf}) holds 
  for every $\zeta<3\eta$.  Consider the sector $S_{\chi,\eta}$, the segment $K=\{0\}\times[0,\frac\eta2]$ and the fundamental domain $\Delta$ bounded by $K$, $F(K)$ and the (now horizontal) straightline segment connecting their 
  endpoints distinct from the origin. Set $\wh\Delta:=\overline\Delta\setminus\{(0,0)\}\subset S_{\chi,\eta}$.  First we define the function 
  $\wt\tau$ on the sector $S_{\chi,\eta}$ so that (\ref{tauc}) holds,  
whenever $x, \wt F(x)\in S_{\chi,\eta}$. Afterwards we extend $\wt\tau$ to all of $W$ by dynamics.
  
 Fix a $\sigma\in(0,\frac12(\frac12-\chi))$. Let $\rho_1$, $\rho_2$ be a partition of unity on the interval $(-\chi, 1+\chi)$ 
 subordinated to its covering by intervals $(-\chi,\frac12+\sigma)$, $(\frac12-\sigma,1+\chi)$, see (\ref{partun}). 
 Set $\nu:=\frac{\tau}{\wt\phi}$. For every $x\in S_{\chi,\eta}$ set 
 \begin{equation}\wt\tau(x):=\rho_1(\nu(x))\tau(x)+\rho_2(\nu(x))(\tau\circ\wt F^{-1}(x)+\wt\phi(x)).\label{wtxf}\end{equation}
The inclusion $x, \wt F(x)\in S_{\chi,\eta}$  implies (\ref{tauc}),  since then  
 $\rho_1(\nu(x))=1$ and $\rho_2(\nu\circ F(x))=1$, as in the proof of Proposition \ref{vi},  by  (\ref{wtxf}) 
 and $\wt F$-invariance of the function $\wt\phi$.  
 Recall that the height of the fundamental domain $\Delta$ is $\frac\eta2$. Let us now replace $W$ by 
 $W\cap\{\wt\phi<\frac\eta2\}$. Then for every $x\in W$ there exists a $N=N(x)\in\zz$ such that $\wt F^N(x)\in\wh\Delta$;  
 the latter $N$ is unique, unless $\wt F^N(x)\in\partial\wh\Delta$. This follows from (\ref{gwf}). Set 
 \begin{equation}\wt\tau(x):=\wt\tau(\wt F^N(x))-N\wt\phi(x).\label{defwt}\end{equation}
 The function $\wt\tau(x)$ is well-defined and $C^\infty$-smooth on all of $W$ and satisfies equation (\ref{tauc}) there. 
 Indeed, it suffices to check smoothness  on the  boundary $\partial\wh\Delta$ and on its images. If $x\in\partial\wh\Delta$,  then either $x, \wt F(x)\in\wh\Delta$, or $x, \wt F^{-1}(x)\in\wh\Delta$. In the first case one can take $N=0$ or $N=1$. 
 For both these values of $N$ the corresponding right-hand sides $\wt\tau(x)$ 
 and $\wt\tau\circ F(x)-\wt\phi(x)$ in (\ref{defwt}) coincide, since equation (\ref{tauc}) holds 
 on $S_{\chi,\eta}\supset\wh\Delta$. The second case is treated analogously. 
  For the same reason the function $\wt\tau$  on $S_{\chi,\eta}$ given by (\ref{wtxf}) 
 coincides with the corresponding expression (\ref{defwt}) (in which $N\in\{0,\pm1\}$). This implies 
 smoothness of the function (\ref{defwt}) on a neighborhood of the subset $\wh\Delta\subset W$. 
 One has $\wt\tau(\wt F(x))=\wt\tau(x)+\wt\phi(x)$, by (\ref{defwt}). This together with the above discussion implies 
 that $\wt\tau$ is $C^\infty$-smooth on $W$ and satisfies (\ref{tauc}) on all of $W$. The function 
 $\wt\tau$ extends to a $C^\infty$-smooth function on $W\cup J$, and the function $\wt\tau-\tau$ is $\wt\phi$-flat. This is proved as in 
 Subsection 2.4. Namely, fix an arbitrary compact segment  $[a',b']\subset(a,b)=J$.  The 
 differential of the map $H:(\tau,\wt\phi)\mapsto(\wt\tau,\wt\phi)$ tends   to the identity, and all its higher 
 derivatives tend to zero, as $\wt\phi\to 0$, uniformly 
 in $\tau\in[a',b']$. In particular, $\frac{\partial\wt\tau}{\partial\tau}(\tau,\wt\phi)\to1$. 
  Indeed, for every $x$, set $N=N(x)$, one has 
 $H(\tau,\wt\phi)=\wt F^N(\tau,\wt\phi)-(N\wt\phi,0)$ on a neighborhood of $x$. This together with formulas (\ref{jacas}) 
 and (\ref{delwt}) applied to the differential and higher jet 
 action of the iterates of the map $\wt F$ together imply the above convergence statement. The restriction of the 
 function $\wt\tau$ to $S_{\chi,\eta}$ extends continuously to the origin as $\wt\tau(0,0)=0$. Thus, 
 $\wt\tau(0,\wt\phi)\to0$, as $\wt\phi\to0$. This together with uniform convergence 
  $\frac{\partial\wt\tau}{\partial\tau}(\tau,\wt\phi)\to1$ in $\tau\in[a',b']$, as $\wt\phi\to0$, implies uniform convergence  
 $\wt\tau(\tau,\wt\phi)\to\tau$.  Together with the above higher derivative convergence, this implies $\wt\phi$-flatness 
 of the function $\wt\tau-\tau$ and proves the claim.
 \end{proof}
 
 The above claim immediately implies Statement 2) of Theorem \ref{thm33}. 
  
\subsection{Proof of existence in Theorem \ref{thm3}. Proof of Theorem \ref{addthm3}} 
Let $F$ be a $C^{\infty}$-lifted strongly billiard-like map. Let $(\tau,h)$ be the 
coordinates from Theorem \ref{thmm}. Set $\phi=\sqrt h$. Let $\wt F$ denote 
the map $F$ written in  the coordinates $(\tau,\phi)$, which is 
$C^{\infty}$-smooth and takes the form $(\tau,\phi)\mapsto(\tau+\phi+\flat(\phi), 
\phi+\flat(\phi))$ (Theorem \ref{thmm}). 
There exists a $\wt F$-invariant function $\wt\phi=\phi+\flat(\phi)$ 
(Theorem \ref{thm33}). 
The function $\wt h:=\wt\phi^2$ is $F$-invariant, 
$C^{\infty}$-smooth, and  $\wt h=h+\flat(h)$; hence $\frac{\partial\wt h}{\partial h}>0$ 
on $J$ and on some domain adjacent to $J$. The existence in Theorem \ref{thm3} is proved. 
Non-uniqueness of the function $\wt h$ will be proved in Subsection 2.9. 

Let us now prove Theorem \ref{addthm3}. Let us fix a function $\wt h$ 
constructed above. Let $\theta$ denote the time function of the Hamiltonian vector field 
with the Hamiltonian function $\wt h$, normalized to vanish on the vertical axis $\{\tau=0\}$. (We consider 
that $(0,0)\in J$, shifting the coordinate $\tau$.) 
The coordinates $(\theta,\wt h)$ are symplectic. In these coordinates 
$F(\theta,\wt h)=(\theta+\xi(\wt h), \wt h)$ for some function $\xi(\wt h)=\sqrt{\wt h}\psi(\wt h)$ in one variable, 
since $F$ preserves the symplectic area; $\psi$ is $C^\infty$-smooth and $\psi(0)>0$, as in Claim 1 in Subsection 2.1. 
Afterwards modifying 
the functions $\wt h$ and $\theta$, as at the end of Subsection 2.1, we get 
new coordinates $(\tau,\wt h)$ (with new $\tau$) in which $F$ takes the form 
(\ref{snform}). Theorem \ref{addthm3} is proved.

 \subsection{Foliation by caustics. Proof of existence in Theorems \ref{thm1},  \ref{thm2},  \ref{thm2closed}}
 
 First let us consider the case, when $\gamma$ is a strictly convex curve injectively parametrized by interval and bounding a domain in $\rr^2$ (conditions of Theorem \ref{thm1}).

 Let $W$ denote the domain in the space of oriented lines that consists of lines intersecting $\gamma$ twice and 
  satisfying condition b) from the beginning of Subsection 1.2. Let $\wh\gamma$ 
 denote the curve  given by the family of orienting tangent lines of $\gamma$. 
 The domain $W$ is adjacent to $\wh\gamma$. The billiard ball map is well-defined on $W\cup\wh\gamma$. 
  Each line $L$ close to a tangent line $\ell$ of $\gamma$ 
 carries a canonical orientation: the pullback of the orientation of the line $\ell$ under a projection $L\to\ell$ close to identity. 
 The billiard ball map acting on thus oriented lines close to tangent lines of $\gamma$  and intersecting $\gamma$ twice will be treated as 
a map acting on  non-oriented lines: we will just forget the orientation.   

Let us fix a natural length parameter $s$ on the curve $\gamma$ and identify each point in $\gamma$ with the corresponding length 
parameter value. 
Let us introduce the following tuples of coordinates on the domain $W$. For every line $L\in W$ let 
$s_1=s_1(L)$ and $s_2=s_2(L)$ denote the  length parameter values of its intersection points with $\gamma$. 
 Let $\phi_j$ denote the oriented angles between $L$ and the tangent lines to $\gamma$ at the points $s_j$. To each $L$ we put into correspondence 
 the pair $(s_1,\phi_1)$ where $s_j$ are numerated so that $s_1<s_2$. Set 
 $$y_1=1-\cos\phi_1,$$
 see (\ref{defy}). Any of the pairs $(s_1,\phi_1)$ or $(s_1,y_1)$ defines $L$ uniquely. Recall that $(s_1,y_1)$ are symplectic coordinates on $W$, see the discussion after 
 Remark \ref{1.9}. Let $V\subset\rr\times\rr_+$ denote the domain $W$ represented in the coordinates $(s_1,y_1)$. It is  adjacent to an interval 
 $J=(a,b)\times\{0\}$ representing $\wh\gamma$. Let $\wt V\subset\rr\times\rr_+$ denote the same domain represented in the coordinates $(s_1,\phi_1)$. 
 
  \begin{proposition}  In the coordinates $(s_1,y_1)$ the  billiard ball map is  a $C^{\infty}$-lifted strongly billiard-like map 
 $F$ defined on $V\cup J$. In the coordinates $(s_1,\phi_1)$ it is a $C^{\infty}$-smooth 
 diffeomorphism $\wt F$ defined on $\wt V\cup J$. 
  \end{proposition}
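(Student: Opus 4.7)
The plan is to reduce the proposition to results already established in Subsection 1.2, namely Theorem \ref{tsym}, Remark \ref{1.9}, Proposition \ref{psmi}, and Example \ref{exdel}. The proposition is essentially a restatement of those results in the specific coordinates $(s_1,\phi_1)$ and $(s_1,y_1)$ chosen here, and the only real work is to match the two coordinate setups.

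First I would identify the coordinates $(s_1,\phi_1)$ of the proposition with the coordinates $(s,\phi)$ of Subsection 1.2 as follows. For each unoriented line $L\in W$ close to a tangent line of $\gamma$, orient $L$ so that at the smaller intersection point $q:=s_1$ the directing vector $u$ points to the convex side of $\gamma$; by strict convexity, this choice depends $C^\infty$-smoothly on $L$. With this choice $(q,u)\in\Gamma^0_+$, and under the map $\Lambda$ of Subsection 1.2 the pair $(s,\phi)$ attached to $(q,u)$ equals $(s_1,\phi_1)$ (up to a fixed sign convention for $\phi$). Correspondingly $y = 1-\cos\phi = y_1$ by (\ref{defy}). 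This identification is a $C^\infty$-smooth diffeomorphism of $\wt V\cup J$ (resp.\ $V\cup J$) onto a neighborhood of $\wt\gamma$ in the $(s,\phi)$- (resp.\ $(s,y)$-) chart, and it is symplectic for the $(s,y)$-chart by Theorem \ref{tsym}.

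Next I would verify that under this identification the billiard map on $W$ is conjugated to $\wt\delta_+$ in $(s,\phi)$-coordinates and to $\delta_+$ in $(s,y)$-coordinates. The only nontrivial point is the ordering: if $L$ has intersection points $s_1<s_2$, then its billiard image has intersection points $s_2<s_3$ (with $s_3>s_2$ by strict convexity and the reflection law), so the first component of the image in $(s_1,\phi_1)$-coordinates is $s_2$, not $s_1$. The asymptotic formula $s_2-s_1 = 2\kappa^{-1}(s_1)\phi_1+O(\phi_1^2)$ and the analogous formula for $\phi_2$ then match (\ref{bilike}) and (\ref{deltsy}) from Proposition \ref{psmi} exactly. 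Consequently the first claim (the map in $(s_1,y_1)$-coordinates is $C^\infty$-lifted strongly billiard-like on $V\cup J$) follows directly from Example \ref{exdel}, while the second claim ($C^\infty$-smoothness of $\wt F$ on $\wt V\cup J$) follows from Remark \ref{1.9}: $\wt\delta_+ = I\circ\beta$ is a product of two $C^\infty$-smooth involutions on $\Gamma$ and $\Gamma^0$, smooth up to and including the boundary curve $\wt\gamma$ (equivalently $J$).

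The main obstacle is the bookkeeping of orientation conventions: the $s_1<s_2$ ordering used in the proposition differs from the ``base-point-first'' convention of Subsection 1.2, and one must verify that the chosen orientation of $L$ extends $C^\infty$-smoothly to the tangent case $L\in\wh\gamma$ (where the ordering degenerates, $s_1=s_2$). This is a matter of checking that the map $L\mapsto(s_1(L),\phi_1(L))$ extends to a $C^\infty$-smooth chart near $\wh\gamma$, which follows from the smoothness of $\Lambda^{-1}$ and the smooth dependence of intersection points on $L$ for a strictly convex $C^\infty$-smooth curve. Once this is settled, the proposition reduces entirely to Example \ref{exdel} and Remark \ref{1.9}.
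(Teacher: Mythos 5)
Your proposal is correct and follows essentially the same route as the paper, whose entire proof is the one-line citation ``The statements of the proposition follow from Proposition \ref{psmi} and Example \ref{exdel}.'' The extra bookkeeping you supply --- matching the $s_1<s_2$ ordering with the base-point-first convention of Subsection 1.2 and checking that the chart extends smoothly to the tangent locus --- is exactly the implicit content the paper leaves to the reader, so nothing is missing and nothing is genuinely different.
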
 
 
 \begin{proof} The statements of the proposition follow from Proposition \ref{psmi} and Example \ref{exdel}.
\end{proof}

\begin{proposition} \label{pry1} Shrinking $V$ (without changing its boundary interval $J$), one can achieve that there 
exists a $C^{\infty}$-smooth $F$-invariant function $G(s_1,y_1)$ on $V\cup J$ such that 
$$G|_J\equiv0, \ \frac{\partial G}{\partial y_1}>0.$$ 
\end{proposition}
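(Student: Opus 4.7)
The plan is to observe that Proposition \ref{pry1} is essentially a direct specialization of Theorem \ref{thm3} (the existence part) to the billiard setting, once we recognize the billiard ball map $F$ in the coordinates $(s_1,y_1)$ as a $C^\infty$-lifted strongly billiard-like map. Indeed, the previous proposition has already established exactly this: in the $(s_1,y_1)$-coordinates, $F$ is defined on $V \cup J$, preserves the area form $ds_1 \wedge dy_1$, fixes each point of $J$, and has a $C^\infty$-smooth lifting to the $(s_1,\phi_1)$-coordinates satisfying the strong billiard-like factorization $\wt F = I \circ \beta$ coming from the decomposition of billiard reflection recalled in Remark \ref{1.9} and Proposition \ref{psmi}.

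First I would invoke Theorem \ref{thm3} applied to this map $F$ on $V \cup J$: this yields a subdomain $U \subset V$ adjacent to $J$ and a $C^\infty$-smooth $F$-invariant function $\wt h : U \cup J \to \rr_{\geq 0}$ satisfying $\wt h|_J \equiv 0$ and $\frac{\partial \wt h}{\partial y_1} > 0$ on $J$. Setting $G := \wt h$ and replacing $V$ by $U$ (which keeps the boundary interval $J$ unchanged, since $U$ is adjacent to $J$), we obtain the desired function.

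The only point requiring minimal care is verifying that the condition $\frac{\partial G}{\partial y_1} > 0$ from the conclusion of Proposition \ref{pry1} agrees with the condition $\frac{\partial \wt h}{\partial y}|_J > 0$ stated in (\ref{condh}) of Theorem \ref{thm3}; this is just a matter of renaming the vertical coordinate. By continuity of the partial derivative, shrinking $U$ further if necessary (still keeping it adjacent to $J$) we may assume $\frac{\partial G}{\partial y_1} > 0$ throughout $U \cup J$, not merely on $J$.

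There is no genuine obstacle here: the entire content of the proposition is the application of Theorem \ref{thm3}, which has been established in Subsections 2.1--2.5 via the Marvizi--Melrose normal form followed by the construction of $\wt F$-invariant functions on a fundamental sector with regularity and flatness established via the matrix-product estimates of Propositions \ref{propasm} and \ref{propasm2}. Thus the proof of Proposition \ref{pry1} reduces to a one-line citation of Theorem \ref{thm3}, with the preceding proposition supplying the hypothesis that $F$ belongs to the class of $C^\infty$-lifted strongly billiard-like maps to which that theorem applies.
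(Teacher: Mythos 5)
Your proposal is correct and matches the paper's own proof, which is precisely the one-line citation "follows from Theorem \ref{thm3} (existence)," relying on the preceding proposition to identify the billiard ball map in $(s_1,y_1)$-coordinates as a $C^\infty$-lifted strongly billiard-like map. Your extra remark about extending $\frac{\partial G}{\partial y_1}>0$ from $J$ to a neighborhood by continuity is a harmless and accurate elaboration of the same argument.
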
 

\begin{proof} The proposition follows from Theorem \ref{thm3} (existence).
\end{proof}

From now on by $W$ we denote the domain of those lines that are represented by points of the 
(shrinked) domain $V$ from Proposition \ref{pry1}. 

The  level curves of the function $G$ are $F$-invariant and form a $C^\infty$-smooth  foliation. 
Lifting everything to the domain $W$ 
in the space of  lines we get a foliation by invariant curves under the billiard ball map. Each its leaf is a smooth family of 
lines. Its enveloping curve is a caustic of the billiard in $\gamma$. To prove that $\gamma$ and 
the caustics in question form a $C^{\infty}$-smooth foliation of a domain adjacent to $\gamma$, we use the following lemma. 

\begin{lemma} \label{lgw} The above function $G$ is $C^{\infty}$-smooth as a function on the domain with boundary $W\cup\wh\gamma$ in the 
space of lines. It has non-degenerate differential on $W\cup\wh\gamma$. 
 Thus, its level curves form a $C^{\infty}$-smooth foliation of $W\cup\wh\gamma$ with $\wh\gamma$ 
being a leaf.
\end{lemma}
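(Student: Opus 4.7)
The approach is a direct verification via the symplectic chart $(s_1,y_1)$ of Subsection 1.2, which identifies $W\cup\wh\gamma$ (as a manifold with boundary $\wh\gamma$) with the chart domain $V\cup J$ (boundary $J=\{y_1=0\}$). Under this identification the lemma reduces to three checks on $G$.

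First I would transfer $G$ from $V\cup J$ to $W\cup\wh\gamma$ via $\La\circ\mu_+$. Proposition \ref{pry1} asserts that $G$ is $C^\infty$ on $V\cup J$ with $G|_J\equiv 0$ and $\partial G/\partial y_1>0$ on $J$. Since $\La\circ\mu_+$ is a diffeomorphism on the interiors and a homeomorphism up to the boundary with a smooth identification of $J$ with $\wh\gamma$ (the latter being a smooth embedded curve in the space of lines), $G$ inherits $C^\infty$-smoothness on $W\cup\wh\gamma$ in the chart structure.

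Second, I would verify $dG\neq 0$ on $W\cup\wh\gamma$. Writing $dG=(\partial G/\partial s_1)\,ds_1+(\partial G/\partial y_1)\,dy_1$, the identity $G|_J\equiv 0$ gives $\partial G/\partial s_1|_J\equiv 0$, hence on $\wh\gamma$ one has $dG=(\partial G/\partial y_1)|_{y_1=0}\,dy_1\neq 0$. Continuity of $\partial G/\partial y_1$ extends the strict inequality $\partial G/\partial y_1>0$ to a neighborhood of $J$ in $V$; after shrinking $V$ (and hence $W$) without changing the boundary interval $J$, one obtains $dG\neq 0$ throughout $W\cup\wh\gamma$. The foliation conclusion then follows from the implicit function theorem on the manifold-with-boundary $W\cup\wh\gamma$, with $\wh\gamma=\{G=0\}$ as a leaf.

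The delicate step is the manifold-with-boundary structure used in Step 1. Because the transition $y_1=1-\cos\phi_1$ between the symplectic chart $(s_1,y_1)$ and the standard line-space chart $(s_1,\phi_1)$ has vanishing derivative at $\phi_1=0$, the two charts induce different smooth structures at $\wh\gamma$. The lemma implicitly adopts the $(s_1,y_1)$-structure, which is the natural one for the symplectic form $\omega=ds_1\wedge dy_1$ and for the billiard dynamics, and it is the precise setting in which Proposition \ref{pry1} delivers the first integral $G$; this compatibility is already embedded in the construction of $\La\circ\mu_+$ in Subsection 1.2.
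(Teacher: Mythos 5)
Your proposal misses the entire content of the lemma. The remark immediately following the lemma in the paper warns that the function $s_1(L)$ is smooth on $W$ but \emph{not} $C^1$ at points of $\wh\gamma$, so the chart $(s_1,y_1)$ does not define the smooth structure of the space of lines along $\wh\gamma$: the identification $\La\circ\mu_+$ is only a homeomorphism up to the boundary, not a diffeomorphism up to it. Consequently your first step --- that ``$G$ inherits $C^\infty$-smoothness on $W\cup\wh\gamma$ in the chart structure'' --- begs the question, and your resolution of the ``delicate step'' (that the lemma ``implicitly adopts the $(s_1,y_1)$-structure'') is not available: the lemma is asserted for the intrinsic smooth structure of the space of lines, and it must be, because the subsequent duality/envelope argument producing an honest planar foliation by caustics needs smoothness in that structure. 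Under your reading the lemma would be a tautology, whereas a generic function smooth in $(s_1,y_1)$ (for instance $y_1\sin s_1$, whose expansion in the intrinsic coordinates contains a $\psi^{3/2}$ term) is \emph{not} $C^\infty$ on $W\cup\wh\gamma$.

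The missing idea is that one must use the invariance of $G$ under the billiard map together with its evenness in $\phi_1$. The paper lifts $G$ to $\wt G(s_1,\phi_1)=G(s_1,1-\cos\phi_1)$ and uses $\wt G(s_1,\phi_1)=\wt G(s_2,-\phi_2)$ (invariance under $\wt\delta_+$) and $\wt G(s_j,\phi_j)=\wt G(s_j,-\phi_j)$ (evenness) to conclude that $\wt G$, viewed in the coordinates $(s_1,s_2)$, is symmetric under permutation; hence it is a smooth function of $\alpha=\frac{s_1+s_2}2$ and $\psi=\left(\frac{s_2-s_1}2\right)^2$. The real work is then Proposition \ref{coorlines}: the pair $(\alpha,\psi)$ \emph{does} form smooth coordinates on $W\cup\wh\gamma$ in the space of lines, proved by comparison with the intrinsically defined coordinates $(\alpha^*,\psi^*)$ (point of parallel tangency and distance to the parallel tangent line) and the curvature asymptotics $\psi^*\simeq\frac12\kappa\psi$. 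Non-degeneracy of $dG$ at $\wh\gamma$ likewise has to be checked in these coordinates, via $y_1\simeq\frac12\kappa^2(s)\psi$ and $\frac{\partial G}{\partial y_1}>0$; your computation of $dG$ in the $(s_1,y_1)$ chart does not by itself give non-vanishing of the differential with respect to the line-space structure. None of these steps appear in your proposal, so as written it does not prove the lemma.
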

\begin{remark} The function $s_1(L)$ is  smooth on $W$ but not on $W\cup\wh\gamma$: it is not $C^1$-smooth 
at points 
of the curve $\wh\gamma$. Therefore, a priori a function smooth in $(s_1,y_1)$ is not necessarily smooth on 
$W\cup\wh\gamma$. 
\end{remark}

\begin{proof}  {\bf of Lemma \ref{lgw}.} The function $G$ is $C^\infty$-smooth on $W$ and has non-degenerate 
differential there, by Proposition \ref{pry1}. Let us prove that this also holds at points of the boundary 
curve $\wh\gamma$. 
The function $G$ lifts to an $\wt F$-invariant function 
$$\wt G(s_1,\phi_1)=G(s_1,1-\cos\phi_1).$$
The map $(s_1,\phi_1)\to (s_1,s_2)$ is a diffeomorphism  defined on $\wt V\cup J$. The analogous statement 
holds for the diffeomorphism $(s_2,\phi_2)\mapsto(s_1,s_2)$. One has 
\begin{equation} \wt G(s_1,\phi_1)=\wt G(s_1,-\phi_1)=\wt G(s_2,\phi_2)=\wt G(s_2,-\phi_2),\label{sf12}\end{equation}
by invariance of the function $\wt G$ under sign change at the second coordinate and by its invariance under the 
billiard ball map represented by $\wt\delta_+:(s_1,\phi_1)\mapsto(s_2,-\phi_2)$. Given an unordered pair $(s_1,s_2)$, 
the tuples $(s_1,\phi_1)$ and $(s_2,\phi_2)$ are well-defined up to permutation. Therefore, in the 
coordinates $(s_1,s_2)$ on $\wt V$ (where $a<s_1\leq s_2<b$ by definition) the function $\wt G$ is $C^\infty$-smooth and extends 
$C^\infty$-smoothly to a neighborhood of the diagonal (identified with $J$) in $(a,b)\times(a,b)$ 
as a function invariant under coordinate permutation. This means that in the coordinates 
$$(\alpha,\beta):=(\frac{s_1+s_2}2, \frac{s_2-s_1}2)$$
(which are $C^{\infty}$-diffeomorphic coordinates on $\wt V\cup J$) the function $\wt G$ is invariant under sign change at $\beta$. 
Hence, $\wt G$ is a $C^\infty$-smooth function in $(\alpha,\beta^2)$, 
$$\wt G(s_1,\phi_1)=\wh G(\alpha,\psi), \ \ \psi:=\beta^2:$$ 
the function $\wh G$ is $C^\infty$-smooth on the domain in $\rr_\alpha\times(\rr_+)_\psi$ adjacent to $J$ and corresponding to $W$, and it is also smooth at points of the boundary $J$. 

\begin{proposition} \label{coorlines} 
The pair $(\alpha,\psi)$ forms $C^{\infty}$-smooth coordinates on the domain with boundary arc $W\cup\wh\gamma$ 
in the space of lines.
\end{proposition}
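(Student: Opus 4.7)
Since the statement is local along $\wh\gamma$, fix a tangent line $L_0 = T_{s_0}\gamma$ and prove the claim in a neighborhood of $L_0$ in the space of lines. By an affine change of ambient coordinates one may assume $\gamma(s_0) = 0$ with $T_{s_0}\gamma$ horizontal, so $\gamma$ is locally the graph $y = f(x)$ with $f(0) = f'(0) = 0$ and $f''(0) = \kappa_0 > 0$. Lines close to $L_0$ are parametrized by $(m,c)\in\rr^2$ via $y = mx + c$, giving $C^\infty$ coordinates on the space of lines near $L_0$. I will (i) exhibit $\alpha$ and $\psi$ as $C^\infty$ functions of $(m,c)$, including at the tangent line where $s_1 = s_2$; (ii) compute the Jacobian of $(m,c) \mapsto (\alpha,\psi)$ at $L_0$ and check it is non-singular; (iii) globalize by injectivity.

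For (i), the intersections of $\{y = mx+c\}$ with $\gamma$ correspond to roots of $g(x,m,c) := f(x)-mx-c$, which has a double zero at $(0,0,0)$ with nondegenerate quadratic part $\tfrac{\kappa_0}{2}\, x^2$. The Malgrange smooth preparation theorem yields a factorization
\[
g(x,m,c) = u(x,m,c)\bigl(x^2 - \sigma_1(m,c)\,x + \sigma_2(m,c)\bigr),
\]
where $u$ is smooth and nonvanishing at the origin and $\sigma_1, \sigma_2$ are smooth germs. Hence the elementary symmetric functions of the two (possibly complex conjugate) roots $x_1, x_2$ are smooth in $(m,c)$, and by Newton's identities every symmetric smooth function of $(x_1,x_2)$ is smooth in $(m,c)$. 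Since the arc-length parameter satisfies $s(x) = s_0 + x + O(x^3)$, the symmetric expressions $s_1 + s_2$ and $(s_2 - s_1)^2$ are smooth in $(m,c)$; hence so are $\alpha = (s_1+s_2)/2$ and $\psi = ((s_2-s_1)/2)^2$ on all of $W \cup \wh\gamma$.

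For (ii), comparing leading terms in the preparation factorization gives $\sigma_1 = 2m/\kappa_0 + O_{(2)}$ and $\sigma_2 = -2c/\kappa_0 + O_{(2)}$, where $O_{(k)}$ denotes terms of order $\geq k$ in $(m,c)$. Combined with $s_1 + s_2 = 2s_0 + \sigma_1 + O_{(3)}$ and $(s_2-s_1)^2 = (\sigma_1^2 - 4\sigma_2)\bigl(1 + O_{(1)}\bigr)$, this yields
\[
\alpha = s_0 + \frac{m}{\kappa_0} + O_{(2)}, \qquad \psi = \frac{2c}{\kappa_0} + O_{(2)}.
\]
The Jacobian of $(m,c) \mapsto (\alpha,\psi)$ at $(0,0)$ is therefore $\diag(\kappa_0^{-1}, 2\kappa_0^{-1})$, nonsingular precisely by strict convexity $\kappa_0 > 0$. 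Thus $(\alpha,\psi)$ is a local $C^\infty$-diffeomorphism from a neighborhood of $L_0$ in $W \cup \wh\gamma$ onto a neighborhood of $(s_0, 0)$ in $\rr \times \rr_{\geq 0}$.

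For (iii), injectivity is immediate: any line in $W \cup \wh\gamma$ is determined by its unordered pair of intersection points with the strictly convex $\gamma$, equivalently by $(\alpha,\psi)$. On $W$ itself, smoothness of $(\alpha,\psi)$ and invertibility of its differential are trivial since $(s_1, s_2)$ is already a smooth chart there. Combined with step (ii) near $\wh\gamma$, this gives a global $C^\infty$-diffeomorphism from $W\cup\wh\gamma$ onto its image in $\rr\times\rr_{\geq 0}$, carrying $\wh\gamma$ to $\{\psi = 0\}$. The only nontrivial point is step (i): the individual parameters $s_1, s_2$ fail to be $C^1$ at tangent lines, but their symmetric combinations are smooth thanks to Malgrange preparation applied at the double zero of $g$, whose non-degeneracy is exactly the strict convexity of $\gamma$.
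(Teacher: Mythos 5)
Your proposal is correct, but it takes a genuinely different route from the paper's. The paper starts from the chord coordinates $(s_1,s_2)$: it observes that the map $(s_1,s_2)\mapsto L$ is smooth and symmetric, hence — being even in $\beta=(s_2-s_1)/2$ — smooth in $(\alpha,\psi)$ by Whitney's even-function theorem, and then verifies non-degeneracy of the differential along $\wh\gamma$ by comparing with an auxiliary geometric chart $(\alpha^*,\psi^*)$ on the space of lines ($\alpha^*$ the point of $\gamma$ with tangent parallel to $L$, $\psi^*$ the distance to that tangent), using the asymptotics $\psi^*\simeq\frac12\kappa\psi$. You go in the opposite direction: you express $(\alpha,\psi)$ as functions of the affine chart $(m,c)$ on lines, resolve the double intersection at tangency by Malgrange preparation, and compute the boundary Jacobian explicitly. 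Your route is more computational and self-contained (no auxiliary chart, and it produces the leading-order formulas for $\alpha$ and $\psi$ directly); the paper's is softer and reuses the convexity picture it has already set up. Both ultimately rest on a division-type theorem. Three small points to tighten. First, passing from smooth symmetric functions of $(x_1,x_2)$ to smooth functions of $(\sigma_1,\sigma_2)$ is Glaeser's $C^\infty$ version of the Newton theorem, not the algebraic Newton identities (which only cover polynomial symmetric functions); for the two functions you actually need, $s_1+s_2$ and $(s_2-s_1)^2$, you can avoid Glaeser altogether by noting they are even in $x_2-x_1$ and invoking Whitney, exactly as the paper does. Second, the expansion of $\sigma_1$ contains in general a linear term in $c$ proportional to $f'''(0)$ (coming from $\partial_x u(0,0,0)$ in the preparation factor), so the Jacobian at $L_0$ is upper triangular with diagonal entries $\kappa_0^{-1}$ and $2\kappa_0^{-1}$ rather than diagonal; its determinant $2\kappa_0^{-2}$ is still nonzero, so the conclusion is unaffected. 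Third, since $\alpha$ and $\psi$ are defined through arc length, the normalization at $s_0$ should be carried out by a Euclidean isometry rather than an arbitrary affine map; the rotation and translation you need are of course affine, so this is only a matter of wording.
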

\begin{proof} Consider the map $S\La: (s_1,s_2)\to\{\text{lines}\}$ sending a pair of points $(s_1,s_2)$ of the curve 
$\gamma$ to the line through them. (For $s_1=s_2=s$, the image is the tangent line to $\gamma$ at $s$.) 
The map $S\La$  is  $C^{\infty}$-smooth  on $(a,b)\times(a,b)$. 
It is invariant under pertumation of the coordinates $s_1$, $s_2$, and its restriction to each connected component 
of the complement to  the diagonal is a diffeomorphism, by convexity.  
Equivalently, it is $C^{\infty}$-smooth in the coordinates $(\alpha,\beta)$ and invariant under sign change at $\beta$. Hence, it is smooth 
in $(\alpha,\psi)$. Its differential is non-degenerate at those points, where $s_1\neq s_2$, or equivalently, $\psi\neq0$. 
It remains to check that it has non-degenerate differential at the points of the line $\{\psi=0\}$. To do this, consider yet another 
tuple of coordinates $(\alpha^*,\psi^*)$ on $W\cup\wh\gamma$ defined as follows:
for every $L\in W\cup\wh\gamma$ 

- the point $\alpha^*=\alpha^*(L)\in(a,b)$ is the unique point in the curve $\gamma$ where the tangent line to $\gamma$ 
is parallel to $L$ (it exists by Rolle Theorem); 

- the number $\psi^*=\psi^*(L)$ is the  distance between the line $L$ and the above tangent line. 

\begin{proposition} \label{pra*}
The coordinates $(\alpha^*,\psi^*)$ are  $C^{\infty}$-smooth coordinates on $W\cup\wh\gamma$. 
\end{proposition}

Proposition \ref{pra*} follows from definition and strict convexity of  $\gamma$. 

Consider now $(\alpha^*,\psi^*)$ as functions of $(\alpha,\psi)$. One obviously has 
\begin{equation}\psi^*(\alpha,0)\equiv0, \ \alpha^*(\alpha,0)\equiv\alpha, \ \frac{\partial\alpha^*}{\partial\alpha}(\alpha,0)\equiv1, \ \frac{\partial\psi^*}{\partial\alpha}(\alpha,0)\equiv0.\label{albe}\end{equation}
As $(\alpha,\psi)\to(\alpha_0,0)$, one has 
\begin{equation}\psi^*\simeq\frac12\kappa(\alpha_0)\psi.\label{betas}\end{equation}
Here $\kappa$ is the curvature of the curve $\gamma$. Indeed, as $s_1,s_2\to \alpha_0$, the line $L$ through $s_1$ and $s_2$ is parallel to a line tangent  to $\gamma$ at a point 
$\alpha^*$ that is $o(s_1-s_2)$-close to $\alpha=\frac{s_1+s_2}2$.  
The distance between the two lines is asymptotic to $\frac12\kappa(\alpha)(\alpha^*-s_1)^2$, by \cite[formula (2.1)]{gpor}. 
This together with the equality  $\alpha-s_1=\beta\simeq\alpha^*-s_1$ implies (\ref{betas}), which in its turn implies that 
 $\frac{\partial\psi^*}{\partial\psi}(\alpha,0)>0$. Together with (\ref{albe}), this implies non-degeneracy of the Jacobian matrix 
 of the vector function $(\alpha^*(\alpha,\psi),\psi^*(\alpha,\psi))$ at the line $\{\psi=0\}$. This proves Proposition \ref{coorlines}.
 \end{proof}
 
 The function $\wh G(\alpha,\psi)=\wt G(s_1,\phi_1)$ is smooth in  $(\alpha,\psi)$, as was shown above. Hence, it is smooth on 
 $W\cup\wh\gamma$ (Proposition \ref{coorlines}). It remains to show that it has non-zero differential at each point $x\in\wh\gamma$; then shrinking $W$ we get that the differential is non-zero at each point in $W\cup\wh\gamma$. 
 Indeed, it is smooth in the coordinates $(s_1,y_1)$ (in which $y_1|_{\wh\gamma}\equiv0$), and one has 
 \begin{equation}\wt G(s_1,\phi_1)\simeq a(s_1)y_1(1+o(1)), \text{ as } y_1\to0, \ \ a(s)>0,\label{gy1}\end{equation}
   by Proposition \ref{pry1}. On the other hand,  as $s_1,s_2\to s$, one has $y_1,\phi_1\to0$ and 
   $$y_1=1-\cos\phi_1=\frac12\phi_1^2(1+o(1)),\ \ \psi=\left(\frac{s_2-s_1}2\right)^2, \ s_2-s_1\simeq2(\kappa(s))^{-1}\phi_1.$$
Hence, $y_1\simeq \frac12\kappa^2(s)\psi$. This together with (\ref{gy1}) implies that in the coordinates $(\alpha,\psi)$ 
one has $\frac{\partial \wh G}{\partial\psi}(\alpha,0)>0$. Together with the above discussion, this proves Lemma \ref{lgw}.
\end{proof}
 
\begin{proof} {\bf of existence in Theorem \ref{thm1}.} The function $G$  defined on 
the set $W\cup\wh\gamma$ in the space of lines is invariant under the billiard ball map. 
Therefore, its level curves are invariant families of lines. They form a $C^{\infty}$-smooth 
foliation of $W\cup\wh\gamma$, with $\wh\gamma$ being a leaf. Let us denote the latter foliation by $\mcf$. 
 The enveloping curves of the curve $\wh\gamma$ and of its leaves  are respectively the curve $\gamma$ and 
 caustics of the billiard on $\gamma$. 
  Let us show that they lie on its convex side and there exists a domain $U\subset\rr^2$ adjacent to $\gamma$ from the convex side such that 
  the latter caustics form a $C^{\infty}$-smooth 
 foliation of $U\cup\gamma$, with $\gamma$ being a leaf.
 
 Fix a projective duality sending lines to points, e.g., polar duality with respect to the unit circle centered at a point $O$ 
 in the convex domain bounded by $\gamma$. Let us shrink $W$ so that its points represent lines 
 that do not pass through $O$. Then the duality represents the subset $W\cup\wh\gamma$ 
 in the space of lines as a domain in the affine chart  $\rr^2\subset\rp^2$ with a boundary curve. 
 The latter domain and curve will be also denoted by $W$ and $\wh\gamma$ respectively.  The curve $\wh\gamma$ is 
 dual to $\gamma$. 
  
  \begin{proposition} The curve $\wh\gamma$ is strictly convex, and the domain $W$ lies on its concave side.
  \end{proposition}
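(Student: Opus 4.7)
The plan is to use the classical reciprocity of polar duality, which simultaneously yields strict convexity of $\wh\gamma$ and the location of $W$. Recall that polar duality with respect to the unit circle centered at $O$ sends a line $\ell = \{ q \in \rr^2 : \langle q, p \rangle = 1\}$ not passing through $O$ to the point $p$, and vice versa; it preserves incidence (a point lies on a line if and only if the dual point lies on the dual line). Since $\wh\gamma$ is by definition the image of the map $s \mapsto (T_s\gamma)^*$, reciprocity gives that the tangent line to $\wh\gamma$ at the dual point $(T_s\gamma)^*$ is exactly $s^*$, the line dual to the point $s\in\gamma$. After shrinking $W$ we may assume all lines involved avoid $O$ and lie in a compact subarc, so the duality is a smooth diffeomorphism on the relevant neighborhood.

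For strict convexity, I would argue that the tangent direction to $\wh\gamma$ at $(T_s\gamma)^*$ is the direction of the line $s^*$, which, as $s$ moves along $\gamma$, is a smooth function of $s$ whose angle is strictly monotone. Indeed, two distinct tangent lines $s_1^*$ and $s_2^*$ to $\wh\gamma$ must be distinct and non-parallel: their common perpendicular direction would correspond, under duality, to a point on the line through $s_1$ and $s_2$, and if $s_1^*$ and $s_2^*$ were parallel the intersection $s_1^* \cap s_2^*$ would be at infinity, i.e., the dual point would correspond to the line at infinity of $\rr^2$, a degenerate case ruled out by local compactness and strict convexity of $\gamma$. A quick local computation (parametrizing $\gamma$ by arclength and differentiating the formula $\wh\gamma(s) = \frac{1}{\langle \gamma(s), N(s)\rangle} N(s)$, where $N(s)$ is the appropriate unit normal) confirms that the geodesic curvature of $\wh\gamma$ is nonzero, since the denominator is bounded away from zero on compacta and the numerator involves the (nonzero) curvature $\kappa$ of $\gamma$.

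For the location of $W$: any $L \in W$ is a secant of $\gamma$ meeting it at two points $s_1 \neq s_2$. By incidence-preservation, $L^*$ lies on both dual lines $s_1^*$ and $s_2^*$, hence $L^* = s_1^* \cap s_2^*$. But $s_1^*$ and $s_2^*$ are the tangent lines to $\wh\gamma$ at the two distinct points $\wh\gamma(s_1)$ and $\wh\gamma(s_2)$; by the strict convexity of $\wh\gamma$ just established, the intersection of two distinct tangent lines to a strictly convex curve lies on the concave side of that curve. Therefore $L^*$ lies on the concave side of $\wh\gamma$, proving the second assertion.

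The main technical obstacle is ensuring the polar duality is a genuine diffeomorphism in our setting despite $\gamma$ being (possibly) an open curve going to infinity: tangent lines passing near $O$ would dualize to points near infinity and tangent lines nearly parallel would dualize to points nearly at infinity. This is handled by first restricting attention to a compact subarc $\gamma' \Subset \gamma$ (as permitted by Convention \ref{coun} and the argument in Subsection 2.3 piecing together an exhaustion) and choosing $O$ on the convex side relative to $\gamma'$, so that the distance of $O$ from every tangent line of $\gamma'$ is bounded below; then shrinking $W$ keeps all points $L^*$ in a compact region of the dual plane and the arguments above go through uniformly.
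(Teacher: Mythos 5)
Your proposal is correct and follows essentially the same route as the paper: the paper likewise invokes the standard fact that the dual of a strictly convex curve is strictly convex, and then observes that a point of $W$ is dual to a secant of $\gamma$, hence lies on two tangent lines to $\wh\gamma$ and therefore on its concave side. Your version merely fleshes out the duality reciprocity and the non-degeneracy (lines avoiding $O$) in more detail than the paper's two-sentence argument.
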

  
  \begin{proof} The curve $\wh\gamma$ is strictly convex, being dual to the strictly convex curve $\gamma$. 
  Each point  $x\in W$ is dual to a line intersecting $\gamma$ twice. Therefore, there are two tangent lines to $\wh\gamma$ 
  through $x$. Hence, $x$ lies on the concave side from $\wh\gamma$.
  \end{proof}
  
  For every $x\in W$ let $\mcf_x\subset\rr^2\subset\rp^2$ denote the leaf through $x$ of the foliation $\mcf$ 
  (represented in the above dual chart), and let $L_x$ denote its projective tangent line at $x$.
  The enveloping curve of the family of lines represented by the curve $\mcf_x$ (treated now as a subset in the 
  space of lines) is its dual curve $\mcf_x^*$. It consists of 
  points $L_y^*$ dual to the lines $L_y$ for all $y\in\mcf_x$. Recall that the boundary curve $\wh\gamma$ is a 
  strictly convex leaf.
  
  \begin{proposition} \label{shrink} Shrinking the domain $W$ adjacent to $\wh\gamma$ one can achieve that the map 
  $x\mapsto L_x^*$ be a $C^{\infty}$-smooth diffeomorphism of the domain $W\cup\wh\gamma$ onto 
  a domain $U\subset\rr^2\subset\rp^2$ taken together with its boundary arc $\gamma$. The domain $U$ lies on the convex side from the curve $\gamma$. 
  \end{proposition}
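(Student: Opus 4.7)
The plan is to verify in several steps that $\Phi: x \mapsto L_x^*$, after possibly shrinking $W$, is a $C^\infty$-diffeomorphism of $W \cup \wh\gamma$ onto its image $U \cup \gamma$, with $U$ on the convex side of $\gamma$. First, I would observe that $\Phi$ factors as $x \mapsto L_x$ (sending a point of $W \cup \wh\gamma$ to the projective tangent line at $x$ of the leaf of $\mcf$ through $x$), followed by projective duality $L \mapsto L^*$. The first factor is $C^\infty$-smooth because $\mcf$ is $C^\infty$-smooth on $W \cup \wh\gamma$ with $\wh\gamma$ a leaf, so tangent lines to leaves depend smoothly on the base point; the second is $C^\infty$-smooth once $W$ is shrunk so that every $L_x$ avoids the polar center $O$. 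Moreover, $\Phi|_{\wh\gamma}$ coincides with the classical point-line duality between the strictly convex curves $\wh\gamma$ and $\gamma = (\wh\gamma)^*$: for $x \in \wh\gamma$, $L_x$ is the projective tangent line to $\wh\gamma$ at $x$, so $L_x^*$ is the point where the line dual to $x$ touches $\gamma$. This makes $\Phi|_{\wh\gamma}: \wh\gamma \to \gamma$ a $C^\infty$-diffeomorphism.

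Next, I would verify non-degeneracy of $d\Phi$ at an arbitrary $x_0 \in \wh\gamma$ by a direct local calculation. Choose local coordinates $(u,c)$ near $x_0$ with $\wh\gamma = \{c = 0\}$, $x_0 = (0,0)$, and the leaves of $\mcf$ given as graphs $\{v = V_c(u)\}$ for a $C^\infty$-smooth family with $V_0 \equiv 0$. In $(\text{slope}, \text{intercept})$-coordinates on affine lines the map $x \mapsto L_x$ becomes
\[
(u,c) \;\longmapsto\; \bigl(V_c'(u),\; V_c(u) - u V_c'(u)\bigr),
\]
whose Jacobian determinant simplifies to $V_c''(u) \cdot \partial_c V_c(u)$. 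At $(u_0, 0)$ this is nonzero because $\wh\gamma$ is strictly convex ($V_0''(u_0) \neq 0$) and the foliation parameter is truly transverse to $\wh\gamma$ ($\partial_c V_c(u_0)|_{c=0} \neq 0$). Projective duality preserves non-degeneracy, so $d\Phi$ has full rank along $\wh\gamma$ and, by continuity, on a one-sided neighborhood, which we secure by a further shrinking of $W$.

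Finally, I would promote local to global. Since $\Phi|_{\wh\gamma}$ is a bijection onto $\gamma$ and $\wh\gamma$ is parametrized by an interval, a tubular-neighborhood argument adapted to the foliation $\mcf$ yields global injectivity of $\Phi$ on a sufficiently thin $W \cup \wh\gamma$. Thus $\Phi$ becomes a $C^\infty$-diffeomorphism onto an open domain $U$ with boundary arc $\gamma$. For the location of $U$: for $x \in W$ the line $x^\vee$ meets $\gamma$ transversally at two nearby points $s_1, s_2$ and is tangent to the caustic $\mcf_x^*$ at $\Phi(x)$, so convexity of $\gamma$ forces this tangent point to lie on the open segment between $s_1$ and $s_2$, hence strictly on the convex side of $\gamma$. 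This is consistent with the asymptotics $\psi^* \simeq \tfrac12 \kappa(\alpha_0)\psi$ from (\ref{betas}) and with the Melrose germ result recalled in Remark \ref{remmelr}.

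The main obstacle is the last promotion from local to global injectivity: the inverse function theorem handles local injectivity cheaply, but ruling out coincidences $\Phi(x_1) = \Phi(x_2)$ for widely separated $x_1, x_2 \in W$ requires using simple-connectedness of $\wh\gamma$ together with uniformity of the size of the local diffeomorphism neighborhoods along $\wh\gamma$. Since the constants in the Jacobian computation may degenerate at the ends of the parametrizing interval, the tube has to be chosen thin enough as a function of position, so that the product structure of $\mcf$ near $\wh\gamma$ pushes forward to a genuine product structure on $U \cup \gamma$.
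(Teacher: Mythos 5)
Your overall skeleton matches the paper's (smoothness of $x\mapsto L_x$, local diffeomorphism along $\wh\gamma$, global injectivity, and location of $U$ on the convex side), and your explicit Jacobian computation $V_c''(u)\,\partial_c V_c(u)$ in slope--intercept coordinates is a perfectly good, more concrete substitute for the paper's one-line appeal to strict convexity of the leaves for local diffeomorphicity. But the step you yourself flag as ``the main obstacle'' --- global injectivity --- is a genuine gap, and the mechanism you propose for it does not work. A tubular-neighborhood argument (local injectivity at a uniform scale plus injectivity of $\Phi|_{\wh\gamma}$) only rules out coincidences $\Phi(x_1)=\Phi(x_2)$ for \emph{nearby} $x_1,x_2$; since $\wh\gamma$ is noncompact (parametrized by an interval), nothing in that argument prevents two widely separated points, lying on the same leaf or on different leaves, from having the same tangent line $L_{x_1}=L_{x_2}$. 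Simple-connectedness of $\wh\gamma$ does not help here, and making the tube ``thin enough as a function of position'' controls only the local scale, not collisions between distant pieces of $W$.

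The paper closes this gap by a global convexity argument rather than a metric one: after replacing $W$ by a union $\cup_k\Pi_k$ of nested flowboxes with strictly convex leaves (each leaf of $\Pi_{k+1}$ crossing $\Pi_k$), one observes that a line $L$ tangent to a leaf $\mcl$ at $x$ meets every \emph{other} leaf either transversally or not at all (the other leaves are nested strictly convex curves on one side or the other of $\mcl$), and by strict convexity $L$ cannot be tangent to $\mcl$ itself at a second point. Hence $L_x$ determines $x$ and $x\mapsto L_x^*$ is globally injective. You should replace your tubular-neighborhood step by this argument (or something equivalent using the nested convex structure of the leaves). Separately, your justification that $\Phi(x)$ lies on the convex side is somewhat circular as written (it presupposes that the caustic lies between the two intersection points $s_1,s_2$); the paper instead notes that no tangent line to $\gamma$ passes through any point of $U$, which excludes the concave side, and then uses path-connectedness of $U$ and the fact that it accumulates on all of $\gamma$ to conclude.
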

  
  \begin{proof} The curve $\wh\gamma$ is strictly convex. No its tangent line passes through $O$, being dual to a 
 point of the curve $\gamma$ (which is a finite point).  Therefore, every compact arc in $\wh\gamma$ has a 
  neighborhood in $\rr^2$ whose intersection with each leaf of the foliation $\mcf$ is a strictly convex curve. 
  Thus, shrinking $W$ we can and will consider that each leaf $\mcl$ is strictly convex and no its tangent line  
 passes through $O$.  Hence, 
  each line $L$ tangent to $\mcl$ 
is disjoint from the leaves lying on the convex side from $\mcl$. Thus, $L$ is disjoint from  $\wh\gamma$ and 
$O\notin L$. 
  Let $U$ denote the set of points dual to lines tangent to leaves in $W$. 
  In the dual picture the latter statements mean that $U\subset\rr^2$ and for every   $A=L^*\in U$  there are no tangent 
  lines  to $\gamma$ passing  through $A$.  The set $U$ is path-connected, 
  disjoint from $\gamma$, and it accumulates to all of $\gamma$. Therefore, it approaches $\gamma$ from 
  the convex side, by the previous statement. Hence, it lies entirely on its convex side. 
  Let us now prove that shrinking $W$ one can achieve that the map $x\mapsto L_x^*$ be a diffeomorphism 
  $W\cup\wh\gamma\to U\cup\gamma$. 

  Fix a compact arc exhaustion 
  $$\wh\gamma_1\Subset\wh\gamma_2\Subset\dots=\wh\gamma.$$
  For every $k$ fix a flowbox $\Pi_k\subset W$ of the foliation $\mcf$ adjacent to 
  $\wh\gamma_k$ and lying in $W$ whose 
  leaves are strictly convex. We construct  the flowboxes $\Pi_k$ with decreasing heights,  
  which means 
  that for every $k$ each leaf of the flowbox $\Pi_{k+1}$ crosses $\Pi_k$. 
Now replace $W$ by the union $\cup_k\Pi_k$, 
  which will be now denoted by $W$. The leaves of the foliation on $W\cup\wh\gamma$ are strictly convex and 
  connected, by construction. We claim that the map $x\mapsto L_x$, and hence, $x\mapsto L_x^*$ is a 
  $C^\infty$-smooth  diffeomorphism. Indeed, it is a local diffeomorphism by strict convexity of leaves. 
  It remains to show that $L_x\neq L_y$ for every distinct $x,y\in W$. Indeed, fix an $x\in W$,  let $\mcl$ denote 
  the leaf of the foliation $\mcf$ through $x$. Set 
  $L=L_x$. Fix a $k$ such that $x\in\Pi_k$. 
  Every  leaf in the flowbox $\Pi_k$ that does not lie in its leaf through $x$  
  either intersects $L$ transversally, or is disjoint from $L$, 
  by convexity. Then the latter statement also holds for every other flowbox $\Pi_\ell$,  by construction 
  and convexity.  This implies that $L$ 
  can be tangent to no other leaf in $W$. It cannot be tangent to the same leaf  $\mcl$ at another point $y\neq x$, 
  by convexity and the above statement.  This proves diffeomorphicity of the map 
  $x\mapsto L_x^*$. 
  \end{proof}
   
The above $C^{\infty}$-smooth diffeomorphism $x\mapsto L_x^*$ sends $W$ onto a domain $U\subset\rr^2$ adjacent 
to $\gamma$. It sends leaves of the foliation 
  $\mcf$ to the corresponding caustics of the billiard on $\gamma$. 
  Hence, the caustics together with the curve $\gamma$
   form a $C^{\infty}$-smooth foliation of $U\cup\gamma$. Constructing the above flowboxes 
   $\Pi_k$ narrow enough in the transversal direction 
   (step by step), we can achieve that for every $x\in\gamma$ and every leaf $\mcl$ 
   of the foliation $\mcf$ there are at most two tangent lines through $x$ 
     to the caustic $\mcl^*$. Indeed, each leaf $\mcl$ of the foliation $\mcf$ is a leaf of some flowbox $\Pi_k$. 
     Its dual caustic $\mcl^*$ will satisfy the above tangent line statement, if the total angle 
     increment of its tangent vector is no greater than $\pi$. 
     The latter angle increment statement holds for the curve $\gamma$. 
     Hence, it remains valid for the caustics dual to the leaves 
     of the flowbox   $\Pi_k$, if $\Pi_k$ is chosen   narrow enough. 
   The existence statement of Theorem \ref{thm1} is proved. 
 \end{proof}
 
 The proof of the existence in Theorem \ref{thm2}  repeats the above proof of the existence in 
 Theorem \ref{thm1} with obvious changes. The existence statement of Theorem \ref{thm2closed} follows from 
 that of Theorem \ref{thm2}.

 \subsection{Space of foliations. Proofs of Theorems \ref{unget},  \ref{tmodgerm}, \ref{tmog} and  Proposition \ref{pmodgerm}}
 
 \begin{proof} {\bf of Theorem \ref{unget}.} It suffices to prove the statement Theorem \ref{unget} for foliations from 
 Theorems \ref{thm3}, \ref{thm33}, since the foliations in Theorems \ref{thm1}, \ref{thm2}, \ref{thm2closed} are obtained from  foliations in Theorem \ref{thm3} by duality, see the above subsection. 
 
 Case of Theorem \ref{thm3}. Consider a $C^{\infty}$-lifted strongly billiard-like map. We already know that in appropriate coordinates it takes the form (\ref{snform}): 
 \begin{equation}F(\tau, h)=(\tau+\sqrt{h},h).\label{snform2}\end{equation}
 The function $h$  is $F$-invariant, and so are its level lines. 
 
 Suppose the contrary: there exists another $C^{\infty}$-smooth $F$-invariant function $G(\tau,h)$, $G(\tau,0)\equiv0$, 
 without critical points on $J=\{ h=0\}$ and such that there exists an $x\in J$ where  the foliations $G=const$ and $h=const$ have 
 different $k$-jets for some $k$. Without loss of generality we consider that $x=(0,0)$, shifting the coordinate $\tau$. 
 Then the asymptotic Taylor series of the function $G$ at $x$ contains 
 at least one monomial $a_{mn}\tau^mh^n$ with $m\geq1$ and a non-zero coefficient $a_{mn}$. Set 
 $$\mcn:=\{(m,n) \ | \ a_{mn}\neq0, \ m\geq1\}, \ d:=\min\{ m+2n \ | \ (m,n)\in\mcn\}.$$
 Consider the lower $(1,2)$-quasihomogeneous part: 
 $$G_d(\tau,h):=\sum_{(m,n)\in\mcn, \ m+2n=d}a_{mn}\tau^mh^n+a_{0,\frac d2}h^d.$$
 One has $G\circ F(\tau,h)-G(\tau,h)\equiv0$. On the other hand, 
$$G\circ F(\tau,h)-G(\tau,h)=G(\tau+\sqrt h,h)-G(\tau,h)$$
 \begin{equation}=G_d(\tau+\sqrt h,h)-G_d(\tau,h)+\text{ higher terms}.
 \label{g-g}\end{equation}
 Here "higher terms" means "a  function that admits an asymptotic Taylor series in $(\tau,\sqrt h)$ at $(0,0)$ 
 that  contains only terms $a_{mn}\tau^\alpha h^\beta$  of quasihomogeneous degrees $\alpha+2\beta>d$". 
 Let $m_0$ denote the higher degree of $\tau$ in a monomial entering $G_d$. 
 The difference $G_d(\tau+\sqrt h,h)-G_d(\tau,h)$ is quasihomogeneous of degree $d$. It contains the monomial $\tau^{m_0-1}h^\ell$, $m_0-1+2\ell=d$, 
 with non-zero coefficient, by construction; here a priori $\ell$ may be non-integer. 
 This monomial will not cancel out with other monomials in the asymptotic 
 Taylor series of the difference $G\circ F(\tau,h)- G(\tau,h)$, by 
 construction. Therefore, the latter difference cannot be identically equal to zero. The contradiction thus obtained 
 proves the statement of  Theorem \ref{unget} in the conditions of Theorem \ref{thm3}.
 
 Case of Theorem \ref{thm33}. In this case we know that the map in question is conjugated to 
 $(\tau,\phi)\mapsto(\tau+\phi,\phi)$. The statement of Theorem \ref{unget} for the latter map is 
 proved by the above arguments for lower homogeneous (i.e., $(1,1)$-quasihomogeneous) terms of the 
 Taylor series of the function $G$. 
 \end{proof}
 
 \begin{proof} {\bf of Proposition \ref{pmodgerm}.} Let $g(\tau,h)$ be a $C^\infty$-smooth function  invariant under the map 
 $F:(\tau,h)\mapsto(\tau+\sqrt h,h)$ that has type (\ref{gnorma}): 
 \begin{equation} g(\tau,h)=h+\flat(h), \  \ g(0,h)\equiv h.\label{gnorman}\end{equation}
Invariance is equivalent to the equality $g(\tau+\sqrt h,h)=g(\tau,h)$. This together with (\ref{gnorman}) implies that  
\begin{equation}\text{the function } \ \psi(s,h):=g(s\sqrt h,h)-h \  \text{ is 1-periodic in } s, \ \psi(0,h)=0.\label{psiper}\end{equation}
Moreover, the function $\psi(s,h)$ is  $C^{\infty}$-smooth and $h$-flat   on  a cylinder $S^1\times[0,\delta)$, $S^1=\rr_{s}\slash\zz$, 
for a small $\delta>0$. This follows from smoothness and $h$-flatness of the function $g(\tau,h)-h$. Conversely, consider 
an $h$-flat function $\psi(s,h)$ that is 1-periodic in $s$ and such that $\psi(0,h)=0$. Then the function 
$$g(\tau,h):=\psi(\frac\tau{\sqrt h},h)+h$$ 
is $C^{\infty}$-smooth,  $F$-invariant and its difference with $h$ is $h$-flat, by construction. Statement 1) of Proposition \ref{pmodgerm} is proved. Its Statement 2) can be reduced  to Statement 1) and also can be proved analogously. 
\end{proof} 
  
 Theorem \ref{tmodgerm}  follows immediately from Proposition \ref{pmodgerm}, and in its turn, it immediately implies Theorem \ref{tmog}.

 \subsection{Proof of Proposition \ref{distgerm} and non-uniqueness in main theorems} 
 
  \begin{proof} {\bf of Proposition \ref{distgerm}.} 
Let us prove the statement of Proposition \ref{distgerm} for a map 
$\wt F$ of type (\ref{tauphi}). We prove it for line fields: for other objects the 
proof is analogous. Without loss of generality we can and will consider that the map $\wt F$  takes the 
form $(\tau,\phi)\mapsto(\tau+\phi,\phi)$: see Statement 2) of Theorem \ref{thm33}. 
Let $G_1$ and $G_2$ be two $\wt F$-invariant line fields on $W$
with distinct germs at $J$. 
This means that there exists a sequence of points $x(k)=(\tau(k),\phi(k))$ with 
$\phi(k)\to0$ and $\tau(k)$ 
lying in a compact subset in $J$ such that the lines $G_1(x(k)),G_2(x(k))\subset T_{x(k)}\rr^2$ 
are distinct. Taking a subsequence, we can and will 
consider that $x(k)\to x=(\tau_0,0)$, as $k\to\infty$. The two-sided orbit 
of a point $x(k)$ with big $k$ consists of points with $\phi$-coordinate 
 $\phi(k)$  whose $\tau$-coordinates form an arithmetic progression with step $\phi(k)$ converging to zero.
At each point of the orbit the lines of the fields $G_1$ 
and $G_2$ are distinct, since this holds at $x(k)$ and by $\wt F$-invariance. 
Therefore, passing to limit, as $k\to\infty$, we get that for every point $z\in J$ 
there exist points $z'$ arbitrarily close to $z$ with $G_1(z')\neq G_2(z')$.
 Hence, the germs at $z$ of the line fields $G_1$ and $G_2$ are distinct. 
 The first statement of Proposition \ref{distgerm}, for a map $\wt F$ of type (\ref{tauphi}),  
 is proved.   Its second  statement, for a $C^\infty$-lifted strongly billiard-like map 
 $F:V\cup J\to  F(V\cup J)\subset\rr^2$
  follows from its first statement and the fact that 
 $F$  is conjugated to the map $(\tau,\phi)\mapsto(\tau+\phi,\phi)$  by a homeomorphism  that is smooth on the complement to the boundary interval $J$. The latter conjugating homeomorphism is the 
 composition of a diffeomorphism conjugating $F$ to the map $(\tau,h)\mapsto(\tau+\sqrt h,h)$ 
 (Theorem  \ref{addthm3}) and  the map $(\tau,h)\mapsto(\tau,\phi)$, $\phi=\sqrt h$. 
 Proposition \ref{distgerm} is proved.
 \end{proof}
 
 \begin{proof} {\bf of non-uniqueness  in Theorems \ref{thm33}, \ref{thm3}, \ref{thm1}, 
 \ref{thm2}, \ref{thm2closed}} 
 
Existence of continuum  of distinct {\it germs} of foliations satisfying the statements of any of the above-mentioned 
theorems follows immediately from Proposition \ref{distgerm} and Theorem \ref{tmog}, which states 
that there are as many distinct boundary germs, as many flat functions on the cylinder $S^1\times\rr_{\geq0}$ 
 with distinct germs at $S^1\times\{0\}$. It remains to show that 
there exists a domain adjacent to the boundary interval (or the curve $\gamma$) which admits an infinite-dimensional 
family  of corresponding foliations with distinct germs. 

Case of Theorem \ref{thm33}. Fix coordinates $(\tau,\phi)$ in which $\wt F(\tau,\phi)=(\tau+\phi,\phi)$. 
Recall that the coordinates $(\tau,\phi)$ are defined on $W\cup J$, where $W\subset\rr\times\rr_+$ is a domain 
adjacent to the interval $J=(a,b)\times\{0\}$. 
Fix a $C^\infty$-smooth $h$-flat finction $\psi$ on the cylinder $S^1\times\rr_{\geq0}$, $S^1=\rr\slash\zz$, with 
non-trivial germ at $S^1\times\{0\}$: 
\begin{equation}\psi(s,h)=\psi(s+1,h)=\flat(h), \ \ \psi(0,h)=0, \ \ |\psi|<\frac18.\label{psipsi}\end{equation}
For every $\var>0$ the function 
\begin{equation} g_\var(\tau,\phi):=\phi+\var\chi(\tau,\phi), \ \ \chi(\tau,\phi):=\psi(\frac{\tau}\phi,\phi),
\label{gepsi}\end{equation} 
is $\wt F$-invariant, $C^\infty$-smooth and well-defined 
on $W\cup J$. Let us show that shrinking the domain $W$ one can achieve that the foliation $g_\var=const$ 
is regular, that is $g_\var$ has no critical points on $W$, whenever $\var$ is small enough.

{\bf Claim 1.} {\it Replacing $W$ by a smaller domain adjacent to $J$, one can achieve that  
each    partial derivative of the function $\chi$ (of any order) 
be bounded on $W$. For any given $m$ and every $\delta>0$ 
shrinking $W$ 
(dependently on $m$ and $\delta$) one can achieve 
that all its order $m$ partial derivatives 
have moduli less than $\delta$.}

\begin{proof} The modulus of each partial derivative of order at most $m$ admits an upper bound by a quantity 
\begin{equation} |\frac{\partial^m\chi(\tau,\phi)}{\partial \tau^\ell\partial\phi^{m-\ell}}|\leq c_m(1+|\tau|^m)(1+\phi^{-2(m+1)})\sum_{\ell,r=1}^m|\psi_{\ell r}|,\label{ubou}\end{equation} 
$$c_m=const>0, \ \ \psi_{\ell r}(\tau,\phi)=\frac{\partial^{\ell+r}\psi}{\partial s^\ell\partial\phi^r}(\frac{\tau}\phi,\phi)=
o(\phi^k),   \ \text{ for every }\  k\in\nn.$$ 
Here the latter $o(\phi^k)$ is uniform in $\tau$, as $\phi\to0$. 
Estimate (\ref{ubou}) follows from 1-periodicity and flatness of the function $\psi(s,h)$ and chain rule for calculating derivatives. Let us now replace the domain $W$ by a smaller domain adjacent to $J$ on which 
the right-hand side in (\ref{ubou}) is bounded for each $m$ and is less than $\delta$ for a given $m$. 
First let us replace $W$ by the connected component adjacent to $J$ of its intersection with the strip $\{ a<\tau<b\}$. 
 In the case, when $(a,b)$ is a finite interval, 
the right-hand side in (\ref{ubou}) is uniformly bounded on $W$ and tends to zero uniformly in $\tau\in(a,b)$, 
as $\phi\to0$: the asymptotics $\psi_{\ell r}(\tau,\phi)=o(\phi^{3m+3})$ kills polynomial growth of the function $\phi^{-2(m+1)}$. 
Therefore, shrinking $W$ one can achieve that for  given $m$ and $\delta$, 
the right-hand side in (\ref{ubou}) be less than $\delta$ on $W$. 

In the case, when some (or both) of the boundary points $a$ or $b$  is infinity, take an exhaustion 
of the interval $(a,b)$ by segments $[a_k,b_k]$. By the above argument, we can 
take a rectangle $\Pi_k=(a_k,b_k)\times(0,d_k)\subset W$ on which for all $m$ the right-hand sides 
in (\ref{ubou}) be bounded, and for  some given $m$ the same right-hand side be less than a given $\delta$. 
  Replacing $W$ by $\cup_k\Pi_k$, 
we achieve that the two latter inequalities hold on $W\cup J$. 
\end{proof}

Let $W$ satisfy the statements of the above claim so that each first partial derivative of the function $\chi$ 
has modulus less than $\frac12$. Then for every $\var\in[0,1]$ 
the  foliation $g_\var=const$  is regular on $W\cup J$, since for those $\var$ one has 
$\frac{\partial g_\var}{\partial\phi}=1+\var\frac{\partial\chi}{\partial\phi}>\frac12$ on $W\cup J$. 
All its leaves are $\wt F$-invariant. For distinct values of the 
parameter $\var$ the germs of the corresponding foliations are distinct at each point of the interval $J$, 
 by Theorem \ref{tmog} and Proposition \ref{distgerm}. This yields a one-dimensional family of foliations 
 from Theorem \ref{thm33} with pairwise distinct germs at each point in $J$. 
 
 Now let us apply the above argument  with the expression $\var\chi$ in (\ref{gepsi})  being replaced by an arbitrary linear combination 
 \begin{equation}\wt\chi_\var(\tau,\phi)=\sum_{k=1}^N\frac{\var_k}{k!4^k}\psi^k(\frac{\tau}{\phi},\phi), \ \  
 \var=(\var_1,\dots,\var_N)\in[0,1]^N.\label{manye}\end{equation}
 Recall that $|\psi|<\frac18$. This inequality together with the above assumption that the first partial derivatives 
 of the function $\chi(\tau,\phi)=\psi(\frac{\tau}{\phi},\phi)$ have moduli less than $\frac12$ on $W$ 
 imply that for every $\var$ as in (\ref{manye}) the module of each first 
  partial derivative of the function 
 $\wt\chi_\var$ is less than $\frac12$ on $W\cup J$.  This implies  that the
   foliation by level curves of the function 
 $g_\var(\tau,\phi)=\phi+\wt\chi_\var$ is a $C^\infty$-smooth foliation on $W\cup J$.  
 We get a $N$-dimensional family of foliations on $W\cup J$ depending on $(\var_1,\dots,\var_N)\in[0,1]^N$ 
 with pairwise distinct germs at $J$, and hence, at each point of the curve $J$ (Proposition \ref{distgerm}).  
The non-uniqueness statement of Theorem \ref{thm33} is proved.

Case of Theorem \ref{thm3}. Its non-uniqueness statement follows from that of Theorem \ref{thm33} 
and also from the above arguments.

Case of Theorem \ref{thm1}. Let us consider the billiard ball map acting on lines as a $C^\infty$-lifted 
strongly billiard-like map $F$.  
Let us introduce new (symplectic) coordinates $(\tau,h)$ in which $F(\tau,h)=(\tau+\sqrt h,h)$, see (\ref{snform}). 
The map $F$ is defined on $W\cup J$, where $J=(a,b)\times\{0\}$ parametrizes the family of lines 
tangent to $\gamma$ and $W\subset\rr\times\rr_+$ is a domain adjacent to $J$. 
Representing lines as points in  $\rp^2$ via a 
projective duality $\rp^{2*}\to\rp^2$ transforms $J$ to a strictly 
convex curve  $\gamma^*\subset\rp^2$ dual to $\gamma$, and 
$W$ to a domain adjacent to $\gamma^*$ from the concave side. See Subsection 2.7. 
We can and will consider that 
$\gamma^*$ and $W$ lie  in an affine chart $\rr^2$, as in the proof of the existence in Theorem \ref{thm1} in Subsection 2.7. In what follows we identify $J$ with $\gamma^*$. 
Consider the foliation $h=const$ by $F$-invariant curves. Let us  construct a family of foliations 
using a $C^\infty$-smooth $h$-flat function $\psi(s,h)$ on $S^1\times\rr_{\geq0}$ with non-trivial germ at $S^1\times\{0\}$, 
as in (\ref{psipsi}). Namely, set 
$$g_\var(\tau,h):=h+\var\chi(\tau,h), \ \ \chi(\tau,h):=\psi(\frac{\tau}{\sqrt h},h).$$
The functions $g_\var$ are $F$-invariant. The germs of any two foliations $g_{\var_1}=const$, 
$g_{\var_2}=const$, $\var_1\neq\var_2$, are distinct at 
each point in $J$, by Theorem \ref{tmog} and Proposition \ref{distgerm}. It remains to prove their regularity 
and regularity of  the dual foliations by caustics on one and the same domain. To do this, we use the following claim. 

{\bf Claim 2.} {\it Shrinking the domain $W$ adjacent to $J=\gamma^*$ one can achieve that 
for every $\var\in[0,1]$ the foliation $g_\var=const$  is regular on $W\cup J$,  its 
leaves are strictly convex curves, as is $\gamma^*$, and the map  $\La_\var:x\to L_{x,\var}$ 
sending a point $x\in W$ to the projective line $L_{x,\var}$ 
tangent to the level curve $\{ g_\var=g_\var(x)\}$ at $x$ is a diffeomorphism on $W$.}  

\begin{proof} Consider the function $h$ and the  above function $\chi$ as functions on 
$W\cup\gamma^*$ as on a domain in $\rr^2\subset\rp^2$. The curve $\gamma^*=\{ h=0\}$ is strictly convex. 
Hence, shrinking $W$ we can and will consider that each level curve $\{ h=const\}\cap W$ is strictly convex. 
Consider the rectangles $\Pi_k\subset W$ from the proof of the above Claim 1 (in the coordinates $(\tau,h)$) 
 with decreasing heights. They are 
represented as curvilinear 
quadrilaterals in $\rr^2\subset\rp^2$. Choosing them with heights small enough,  we can achieve that 
$||\nabla\chi||<\frac12||\nabla h||$  on $\Pi_k$. Let us now replace $W$ by $\cup_k\Pi_k$. 
Then $\nabla g_\var\neq0$ on $W$, and hence, the foliation $g_\var=const$ is regular for all $\var\in[0,1]$. 
Choosing $\Pi_k$ with heights small enough  (step by step) 
 one can also achieve that each level curve $\{ g_\var=const\}\cap\Pi_k$ be 
strictly convex for every $\var\in[0,1]$, by strict convexity of the boundary curve $\gamma^*$ and 
 $h$-flatness of the function $\psi$. In more detail, let $(x,y)$ be coordinates on the ambient affine chart 
 $\rr^2$. Strict convexity of level curves $\{g_\var=const\}$ is equivalent to non-vanishing of the Hessian\footnote{The  Hessian $H(g)$ of a function $g$ was introduced by S.Tabachnikov in his paper \cite{tab08}, where he 
 used it to study his conjecture stating that every polynomially integrable outer billiard is an ellipse 
 (later this conjecture was solved in \cite{gs}). The Hessian was  also used  by M.Bialy, A.E.Mironov  and later the author   in the solution of Bolotin's polynomial version of the Birkhoff 
 Conjecture, which is the result of papers  \cite{bm, bm2, gl2}.} $H(g_\var)$:  
 $$H(g_\var)\neq0, \ \ H(g):=\frac{\partial^2g}{\partial x^2}\left(\frac{\partial g}{\partial y}\right)^2+\frac{\partial^2g}{\partial y^2}\left(\frac{\partial g}{\partial x}\right)^2-2\frac{\partial^2g}{\partial x\partial y}\left(\frac{\partial g}{\partial x}\right)\left(\frac{\partial g}{\partial y}\right).$$
 The Hessian $H(g_\var)$ is the sum of the Hessian $H(h)$ (which is non-zero on $W\cup\gamma^*$, 
 since the curves $\{h=const\}$ are strictly convex) and a finite sum of products; each product  contains 
  $\var$, at least one derivative of the function $\chi$ and at most two derivatives of the function $h$; each derivative 
  is of order at most two. Choosing the rectangles $\Pi_k$ with heights small enough, we can achieve that 
  the module of the  latter sum of products be no greater than $\frac12\var|H(h)|$ for $\var\in[0,1]$.  This follows from
  convexity of the curve $\gamma^*$ and $h$-flatness of 
   the function $\psi$: shrinking $W$, one can achieve that all the first and second derivatives of the function 
   $\chi$ have moduli bounded by arbitrarily small  $\delta$ (Claim 1). 
  Then $H(g_\var)\neq0$ on $W$, hence, the curves $\{ g_\var=const\}\cap W$ are strictly convex. 
  
  Now for every $k$ we choose smaller rectangles $\wt\Pi_k\subset\Pi_k$ 
  with decreasing heights and with the lateral (i.e., vertical) sides lying in the lateral sides of the bigger rectangles $\Pi_k$ 
  that satisfy the following additional statement.
  For every $\var\in[0,1]$ let $\Pi_{k,\var}$ denote the minimal flowbox for the foliation $g_\var=const$ 
  with lateral (i.e., transversal) sides lying in the lateral sides of $\Pi_k$ that contains $\wt\Pi_k$. This is the union of arcs of leaves 
  that go from one lateral side of $\Pi_k$ to the other one and cross $\wt\Pi_k$.  For every $k$ we can and will subsequently choose $\wt\Pi_k$ with heights small enough (i.e., narrow enough in the transversal direction) so that 
  for every $\var\in[0,1]$ the flowbox $\Pi_{k,\var}$ lies in $\Pi_k$, and the heights of the flowboxes $\Pi_{k,\var}$ 
  be decreasing in $k$: more precisely, for every $k$ each local leaf in $\Pi_{k+1,\var}$ crosses $\Pi_{k,\var}$, 
 as in the proof of Proposition \ref{shrink}.  Then 
  the map $\La_\var:x\mapsto L_{x,\var}$ 
   is a diffeomorphism on $W_\var:=\cup_k\Pi_{k,\var}$ for every $\var\in[0,1]$, as at the end of the proof of 
   Proposition \ref{shrink}.  Hence, it is a diffeomorphism on 
  \begin{equation} \wt W:=\cup_k\wt\Pi_k.\label{wtww}\end{equation}
    The claim is proved.
  \end{proof}
   
   {\bf Claim 3.} {\it Consider the foliation by caustics of the billiard on $\gamma$ that is dual to the foliation 
   $g_\var=const$. There exists a domain $U\subset\rr^2$ adjacent to $\gamma$ from the convex side where the 
   above foliation by caustics is $C^{\infty}$-smooth (and also smooth 
   at the points of the curve $\gamma$) for every $\var\in[0,1]$. Moreover, shrinking $U$  
   one can achieve that for every $x\in\gamma$ 
   and every $\var\in[0,1]$ there are at most two tangent lines through $x$ to any given leaf of the corresponding 
   foliation by caustics on $U$.}
   
   \begin{proof} Let  $\wt W$ be the domain (\ref{wtww}) constructed above. For every $\var\in[0,1]$ 
   the map $\La_\var^*:x\mapsto L_{x,\var}^*$ sending $x$ to the point dual to the corresponding line $L_{x,\var}$ 
   is a diffeomorphism, since so is $\La_\var$. It sends the domain $\wt W$ foliated by level curves of the 
   function $g_\var$ onto a domain $U_\var$ 
   adjacent to $\gamma$ and foliated by their dual curves: caustics of the billiard on 
   $\gamma$. They form a $C^\infty$-smooth foliation on $U_\var\cup\gamma$. 
   For the proof of the first statement of Claim 3 it remains to show that there exists a domain $U$ adjacent to $\gamma$ that lies in the intersection 
    $\cap_\var U_\var$ (and hence, for each $\var$ it is smoothly foliated by the corresponding caustics). To do this, we 
 construct the above $\wt W$ and a smaller domain $W'\subset\wt W$ adjacent to $\gamma^*$ 
   so that the following statement holds:
   
   (*) for every $p\in W'$ and every $\var\in[0,1]$ there exists a $q=q(p,\var)\in\wt W$ such that 
   the projective line $L_{p,0}$ tangent to the curve $\{ h=h(p)\}$ at $p$ is tangent to the leaf of the foliation $g_\var=const$ at $q$. 
   
   Statement (*) implies that the 
   image $U=\La_0(W')$ is contained in all the domains $U_\var$ and regularly foliated by 
   caustics dual to level curves of the function $g_\var$ for every $\var\in[0,1]$. 
   
   Take the rectangles  $\Pi_k$ and $\wt\Pi_k$  from the proof of Claim 2. Let us call their sections 
   $h=const$ horizontal and 
   transversal sections $\tau=const$ vertical.  For every $k$ fix two vertical sections $\ell_{1,k}$ and $\ell_{2,k}$ 
   crossing the interior $Int(\Pi_k)$ that lie in the $\frac1{2^k}$-neighborhoods of the corresponding 
   lateral sides of the 
   rectangle $\Pi_k$. We can and will choose a rectangle $\Pi_k'\subset\wt\Pi_k$ with lateral sides 
  lying on $\ell_{1,k}$ and $\ell_{2,k}$ and height small enough so that  for every $\var\in[0,1]$ 
  and every $p\in\Pi_k'$ there exists a $q=q(p,\var)\in\wt\Pi_k$ satisfying statement (*).  This is possible 
  by flatness of the function $\psi$ and strict convexity of the curve $\gamma^*$. Then 
  statement (*) holds for the domain $W'=\cup_k\Pi_k'\subset W$. This together with the above discussion proves 
  the first statement of Claim 3. One can achieve that its second statement (on tangent lines) hold as well by choosing 
  the above rectangles $\Pi_k'$  with heigth small enough, as in the proof of the existence in Theorem \ref{thm1} at the 
  end of Subsection 2.7.
  \end{proof}
  
  Claim 3  implies non-uniqueness statement of Theorem \ref{thm1}, with one-dimensional family of 
  foliations with distinct germs. Modifying the above arguments  
   as in the proof of non-uniqueness statement 
  of Theorem \ref{thm33} (see formula (\ref{manye}) and the discussion after it) we get $N$-dimensional 
  family of foliations with distinct germs for every $N\in\nn$.   Non-uniqueness statements 
  of Theorems  \ref{thm2} and \ref{thm2closed} are proved analogously.
\end{proof}
 
\subsection{Conjugacy of billiard maps and Lazutkin length. Proof of Theorems \ref{thconj1}, \ref{thconj2}, 
\ref{thconj3}, \ref{thconj4}, Propositions \ref{proform}, \ref{proform2} and Lemma \ref{lazconv}}

\begin{proof} {\bf of Proposition \ref{proform}.} The map 
$$(s,y)\mapsto(X,Y):=(t_L(s),w^{\frac23}(s)y), \ t_L(s)=\int_{s_0}^sw^{-\frac23}(u)du$$
is symplectic and conjugates $F$ to a $C^\infty$-lifted strongly billiard-like map  of the type 
\begin{equation} \Phi:(X,Y)\mapsto(X+\sqrt Y+O(Y), Y+o(Y^{\frac32}),\label{phimap}\end{equation}
see \cite[theorem 7.11]{gpor} and Proposition \ref{classinv}. 
Thus, without loss of generality we can and will consider that $F$ has the form (\ref{phimap}), 
hence, $w(s)\equiv1$. Then 
$t_L(s)=s$ up to additive constant. Thus, we have to show 
that 
\begin{equation} H_1(s,0)=\alpha s+\beta,\label{hsab}\end{equation}
\begin{equation} H_1(s,0)=s+\beta, \text{ if } H \text{ is symplectic.}\label{hsabs}\end{equation}
By definition, $H(s,y)=(H_1(s,y),H_2(s,y))$ conjugates $F$ to $\La:(t,z)\mapsto(t+\sqrt z,z)$. 
Hence, it sends the fixed point line $\{ y=0\}$ of the map $F$ to the fixed point line $\{ z=0\}$ of the map 
$\La$, thus, $H_2(s,0)\equiv0$. Writing conjugacy equation on the first components yields
$$H_1\circ F(s,y)=H_1(s+\sqrt y+O(y), y+o(y^{\frac32}))=H_1(s,0)+\frac{\partial H_1}{\partial s}(s,0)\sqrt y+O(y)$$
$$=\La_1\circ H(s,y)=H_1(s,y)+\sqrt{H_2(s,y)}=H_1(s,0)+\sqrt{\frac{\partial H_2}{\partial y}(s,0)}\sqrt y+O(y).$$
This yields 
\begin{equation}\frac{\partial H_1}{\partial s}(s,0)\equiv \sqrt{\frac{\partial H_2}{\partial y}(s,0)}>0.\label{dersq}\end{equation}
The Jacobian matrix of the map $H$ at points $(s,0)$ is equal to 
\begin{equation}
Jac(s,0)=\frac{\partial H_1}{\partial s}(s,0)\frac{\partial H_2}{\partial y}(s,0)=\left(\frac{\partial H_1}{\partial s}(s,0)\right)^3>0,
\label{symjac}\end{equation}
by (\ref{dersq}) and since $H_2(s,0)\equiv0$, which yields $\frac{\partial H_2}{\partial s}(s,0)=0$. 
This proves orientation-preserving property of the diffeomorphism $H$ and increasing of the function 
$H_1(s,0)$. 

Let now $H$ be symplectic, that is $Jac(s,0)\equiv1$. Then $\frac{\partial H_1}{\partial s}(s,0)\equiv1$, 
by (\ref{symjac}). This means that $H_1(s,0)=s+\beta$ for some $\beta\in\rr$. This proves (\ref{hso}).

Let now $H$ be not necessarily symplectic. Let us prove (\ref{hsab}). 
Suppose the contrary: there exist two points $s_0<s^*_0\in(a,b)$ such that 
$$\ell:=\frac{\partial H_1}{\partial s}(s_0,0)\neq\ell^*:=\frac{\partial H_1}{\partial s}(s^*_0,0).$$
Fix  small $\var,\delta>0$ such that 
$$s_0-\var, s_0^*+\var\in(a,b), \ \ [\ell-\delta,\ell+\delta]\cap[\ell^*-\delta,\ell^*+\delta]=\emptyset.$$ 
Fix a small $\eta>0$ and a $y_0\in(0,\frac\eta4)$, set $q_0=(s_0,y_0)$. 
Let $q_{-N_-},\dots, q_{-1},q_0,$ $q_1,\dots,q_{N_+}$, $q_j=(s_j,y_j)$, denote 
the $F$-orbit  
 of the point $q_0$ in the rectangle $[s_0-\var,s_0^*+\var]\times[0,\eta]$.  
  Here $N_\pm=N_\pm(y_0)$. 
It is known that the $s$-coordinates of its points form an asymptotic arithmetic progression $s_j=s(q_j)$, and 
their $y$-coordinates are asymptotically equivalent: 
\begin{equation}s_{j+1}-s_j\simeq\sqrt{y_0},  \ y_j\simeq y_0, \text{ as } y_0\to0,
\label{arprogr}\end{equation} uniformly in $j\in[-N_-(y_0),N_+(y_0)-1]$,
\begin{equation} s_{-N_-}<s_0, \ s_{N_+}>s_0^*,\label{predely}\end{equation}
whenever $y_0$ is small enough (dependently on $\var$). See \cite[lemma 7.13]{gpor}. The image of the above orbit under the map $H$ should be an orbit 
of the map $\La:(t,z)\mapsto(t+\sqrt z,z)$. The abscissas  of its points, $x_j:=H_1(q_j)$, form 
 an arithmetic progression: $x_{j+1}-x_j=\sqrt{z_0}$, $z_0=z(H(q_0))$. 
We claim that this yields a contradiction to the inequality 
$\ell\neq\ell^*$ and (\ref{arprogr}). Indeed, one has 
\begin{equation}x_1-x_0=H_1(q_1)-H_1(q_0)\simeq\ell(s_1-s_0)\simeq\ell\sqrt{y_0},\label{incr01}
\end{equation}
by (\ref{arprogr}) and the Lagrange Increment Theorem. 
On the other hand, take  a family of indices $k=k(y_0)$ such that $s_k=s_k(y_0)\to s_0^*$, as $y_0\to0$: 
it exists, since the asymptotic progression $s_j$ has steps uniformly decreasing to $0$, it starts 
on the left from $s_0$ and ends on the right from $s_0^*>s_0$, see (\ref{predely}). 
Repeating the above argument for $x_k$ and $x_{k+1}$ yields 
$$x_{k+1}-x_k\simeq\ell^*\sqrt{y_0}\neq
x_1-x_0\simeq\ell\sqrt{y_0},$$
whenever $y_0$ is small enough, since $\ell\neq\ell^*$. The contradiction thus obtained to the equality of the above differences 
proves that $\frac{\partial H_1}{\partial s}(s,0)\equiv const$. This proves (\ref{hsab}) and Proposition 
\ref{proform}. 
\end{proof}

\begin{proof} {\bf of 
 Proposition \ref{proform2}.} It repeats the proof of Proposition \ref{proform}, statement 2). 
 \end{proof}

\begin{proof} {\bf of Theorem \ref{thconj4}.} Let us fix an arbitrary point in each curve $\gamma_i$ and parametrize 
it  by natural length parameter so that the given point corresponds to zero parameter value. 
For every $i=1,2$ let 
 $\mct_{\gamma_i}$ denote the billiard map corresponding to the 
curve $\gamma_i$. It is a $C^\infty$-lifted strongly billiard-like map in the coordinates $(s,y)$ defined on a 
domain in $\rr\times\rr_+$ adjacent to an interval $J_{\gamma_i}=(a_i,b_i)\times\{0\}$. 
The corresponding function $w=w_i(s)$ is equal to $2\sqrt2\kappa_i^{-1}(s)$, where $\kappa_i$ is the curvature of 
the curve $\gamma_i$. 
There exists a domain $U_i\subset\rr_s\times(\rr_+)_y$  adjacent to $J_{\gamma_i}$ such that there exists a $C^\infty$-smooth symplectomorphism 
$H_i=(H_{1i},H_{2i})$ on $U_i\cup J_{\gamma_i}$ conjugating $\mct_{\gamma_i}$  to its normal form $\La:(t,z)\mapsto(t+\sqrt z,z)$, $H_i\circ \mct_{\gamma_i}\circ H_i^{-1}=\La$  
(Theorem \ref{addthm3}). The restriction to the $s$-axis of the first component $H_{1i}$ is given by the 
Lazutkin parameter:
\begin{equation}H_{1i}(s,0)=t_L(s):=\int_{0}^sw_i^{-\frac23}(u)du+const=\frac1{2}\int_{0}^s\kappa_i^{\frac23}(u)du+const,
\label{newstar}\end{equation}
by Proposition \ref{proform}. Therefore, the image $H_{1i}(J_{\gamma_i})$ is the interval $\wt J_i\times\{0\}$ 
equipped with the coordinate $t_L$, whose length is thus equal to $\frac1{2}\mcl(\gamma_i)$. 
The image  domain $H_i(U_i)\subset\rr\times\rr_+$ is adjacent to $\wt J_i$. Thus, (symplectic) $C^\infty$-conjugacy 
of the billiard maps near the boundary  is equivalent  to the existence of a (symplectic) 
$C^\infty$-diffeomorphism $\Phi$ commuting with $\La$, defined on a domain  $V_1\subset\rr_t\times(\rr_+)_z$ adjacent to 
$\wt J_1$ and sending it onto a domain $V_2\subset\rr_t\times(\rr_+)_z$ adjacent to $\wt J_2$ that extends 
as a $C^\infty$-diffeomorphism to $\wt J_1$, $\Phi(\wt J_1)=\wt J_2$. The latter diffeomorphism $\Phi$ exists 
in the class of symplectomorphisms, if and only if $\mcl(\gamma_1)=\mcl(\gamma_2)$ and one of the 
conditions i) or ii) of Theorem \ref{thconj3} holds. Indeed, if a symplectomorphism $\Phi$ commuting with 
$\La$ exists, then its restriction to $\wt J_1$ should be a translation (Proposition \ref{proform}). This implies 
that the lengths of the intervals $\wt J_1$, $\wt J_2$ are equal (i.e.,  the Lazutkin lengths of the curves $\gamma_1$, 
$\gamma_2$ are equal) and at least one of the conditions i) or ii) holds. Conversely, if 
$\mcl(\gamma_1)=\mcl(\gamma_2)$ and one of the conditions i) or ii) holds, then we can and will consider 
that $\wt J_1=\wt J_2$, applying a translation. Then the identity symplectomorphism $\Phi=Id$ 
has the required properties. This proves Theorem \ref{thconj4}. 
\end{proof}

\begin{proof} {\bf of Theorem \ref{thconj3}.} Let us repeat the above argument, where now the above diffeomorphism 
$\Phi:V_1\cup\wt J_1\to V_2\cup\wt J_2$ commuting with $\La$ is not necessarily symplectic. Let such a $\Phi$ 
exist. Then its restriction to $\wt J_1$ is an affine map in the first coordinate, $t\mapsto\alpha t+\beta$ 
(Proposition \ref{proform}), and $\wt J_2=\Phi(\wt J_1)$. 
This implies that $\mcl(\gamma_2)=\alpha\mcl(\gamma_1)$, and  one of the conditions i) or ii) holds. 
Conversely, let one of the conditions i) or ii) holds. Then without loss of generality we can and will consider that 
the interval $\wt J_2$ is obtained from $\wt J_1$ by a homothety $t\mapsto\alpha t$, applying a translation. 
The latter homothety 
extends to the linear map $\Phi:(t,z)\mapsto(\alpha t,\alpha^2z)$ commuting with $\La$ and thus, having the 
required properties. The statement of Theorem \ref{thconj3} on conjugacy in $(s,y)$-coordinates is proved. 
Together with Remark \ref{rsysf}, it implies that if one of the conditions i) or ii) holds, then the billiard 
maps are $C^\infty$-smoothly conjugated near the boundary in $(s,\phi)$-coordinates. Let us prove the converse: 
$C^\infty$-smooth conjugacy in $(s,\phi)$-coordinates implies that one of the conditions i) or ii) holds. 

Let $\wt H$ be a $C^\infty$-smooth diffeomorphism conjugating the billiard maps $\mct_{\gamma_1}$ and 
$\mct_{\gamma_2}$ near the boundary in $(s,\phi)$-coordinates, 
$\wt H\circ\mct_{\gamma_1}\circ\wt H^{-1}=\mct_{\gamma_2}$.
Let $H_i$ be symplectomorphisms conjugating  the billiard maps $\mct_{\gamma_i}$ in 
$(s,y)$-coordinates with the map 
$\La:(t,z)\mapsto(t+\sqrt z,z)$, see the above proof of Theorem \ref{thconj4}. The variable changes 
$$y\mapsto\phi=\arccos(1-y), \ \ z\mapsto\wt z:=\sqrt z$$ 
lift each diffeomorphism $H_i$ to a diffeomorphism $\wh H_i(s,\phi)=(\wh H_{1i}(s,\phi),\wh H_{2i}(s,\wt\phi))$ 
 conjugating the corresponding billiard map with 
the map $\wt\La:(t,\wt z)\mapsto(t+\wt z,\wt z)$. 
Then the restriction of its first component $\wh H_{1i}(s,0)=H_{1i}(s,0)$ to the 
$s$-axis coincides with the Lazutkin parameter $t_L$ of the curve $\gamma_i$ up to  post-composition with 
affine transformation,  by Proposition \ref{proform}. Set $\wh J_i:=H_{1i}(J_{\gamma_i})$. 
The diffeomorphism $\Phi:=\wh H_2\circ\wt H\circ\wh H_1^{-1}$ 
commutes with $\wt\La$, sends the interval $\wh J_1$ onto $\wh J_2$, and it sends a domain in 
$\rr\times\rr_+$ adjacent to $\wh J_1$ onto a domain in $\rr\times\rr_+$ adjacent to $\wh J_2$. Therefore, 
the  restriction of its first component to $\wh J_1$ is an affine map $t\mapsto\alpha t+\beta$, by 
Proposition \ref{proform2}. Thus, $\wh J_2$ is a rescaled image of the interval $\wh J_1$ up to translation. 
Recall that the lengths of the intervals $\wh J_i$ are equal to the Lazutkin lengths of the corresponding curves 
$\gamma_i$ divided by $2$, see (\ref{newstar}). This implies that the improper integrals defining the 
Lazutkin lengths of the curves $\gamma_i$ converge or diverge simultaneously, and one of the conditions 
i) or ii) holds. Theorem \ref{thconj3} is proved.
\end{proof}

\begin{proof} {\bf of Lemma \ref{lazconv}.} Consider a strictly convex $C^2$-smooth planar 
curve $\gamma$ going to infinity that 
has an asymptotic tangent line at infinity. Without loss of generality we can and will consider that 
the latter tangent line is the horizontal $x$-axis in $\rr^2_{x,y}$,  $\gamma$ lies above it, and 
$\gamma$ is the graph of a $C^2$-smooth function $f$ defined on $[1,+\infty)$, $\gamma=\{ y=f(x)\}$,
\begin{equation}f(x),f'(x)\to0, \text{ as } x\to+\infty, \ \ f,f''>0, \ f'<0.\label{asbove}\end{equation}
 One can achieve this by  applying 
rotation sending the asymptotic line to the $x$-axis, restricting ourselves to a subarc of the curve 
$\gamma$ with the same asymptotic line and applying symmetry with respect to the $x$-axis, if necessary. 
The improper integral defining the Lazutkin length of the curve $\gamma$ takes the form 
\begin{equation}\int_1^{+\infty}\kappa^{\frac23}(s)ds, \ \ \kappa(s(x))=\frac{f''(x)}{(1+(f'(x))^2)^{\frac32}}, \ 
ds=\sqrt{1+(f'(x))^2}dx.\label{implaz}\end{equation}
Its convergence is equivalent to the convergence of the integral
\begin{equation}\int_1^{+\infty}(f''(x))^{\frac23}dx,\label{imprint}\end{equation}
since $f'(x)\to0$, thus, $1+(f'(x))^2\to1$, as $x\to+\infty$. 

{\bf Claim.} {\it For every $C^2$-smooth function $f$ as in (\ref{asbove}) the improper integral  (\ref{imprint}) converges.}

\begin{proof} The integral (\ref{imprint}) is estimated from above by H\"older inequality:
$$\int_1^{+\infty}(f''(x))^{\frac23}dx=\int_1^{+\infty}(xf''(x))^{\frac23}x^{-\frac23}dx$$
\begin{equation}
\leq\left(\int_1^{+\infty}xf''(x)dx\right)^{\frac23}\left(\int_1^{+\infty}\frac{dx}{x^2}\right)^{\frac13}.\label{holder}
\end{equation}
Therefore, it remains to prove that the integral $\int_1^{+\infty}xf''(x)dx$ converges. Integrating by parts yields 
\begin{equation}\int_1^{+\infty}xf''(x)dx=xf'(x)|_1^{+\infty}-\int_1^{+\infty}f'(x)dx=xf'(x)|_1^{+\infty}-f(1).
\label{intbyp}\end{equation}
Suppose the contrary: the integral in the left-hand side diverges. Then it is equal to $+\infty$, since $f''(x)>0$. 
Therefore,  $xf'(x)\to+\infty$, as $x\to+\infty$. Hence, $f'(x)>\frac1x$, whenever $x$ is greater than some constant 
$N>1$. Integrating the latter inequality along the interval $[N,+\infty)$ yields $f(N)=+\infty$. 
The contradiction thus obtained proves convergence of the integral in the left-hand side in (\ref{intbyp}), 
and hence, of the integral (\ref{imprint}). The claim is proved.
\end{proof}

The claim together with (\ref{implaz}) imply convergence of the improper integral defining the Lazutkin length. 
This proves Lemma \ref{lazconv}.
 \end{proof}
 
 \begin{proof} {\bf of Theorem \ref{thconj1}.} The Lazutkin lengths of both curves $\gamma_1$ and $\gamma_2$ 
 are finite, since they have asymptotic tangent lines at infinity in both directions and by Lemma \ref{lazconv}. 
 This together with Theorem \ref{thconj3} implies $C^\infty$-smooth 
 conjugacy of the corresponding billiard  maps near the boundary 
 and up to the boundary. Theorem \ref{thconj1} is proved. 
 \end{proof}
 
 The proof of Theorem \ref{thconj2} is analogous to the above proof of Theorem \ref{thconj1}.

\section{Acknowledgements}

I am grateful to Marie-Claude Arnaud, Sergei Tabachnikov, Alexander Plakhov, Ivan Beschastnyi, Stefano Baranzini, 
Vadim Kaloshin, Comlan Koudjinan for helpful discussions. I am grateful to the referee for careful reading the paper and for very helpful remarks.

\end{document}